\newtheorem{thm}{Theorem}[section]
\newtheorem{thmz}{Theorem}             
\newtheorem{prop}[thm]{Proposition}
\newtheorem{lem}[thm]{Lemma}
\newtheorem{cor}[thm]{Corollary}
\newtheorem{conj}[thmz]{Conjecture}    
\theoremstyle{definition}
\newtheorem{defn}[thm]{Definition}
\newtheorem{ex}[thm]{Example}
\newtheorem{rem}[thm]{Remark}
\newcommand{\Rr}{\mathbb R}
\newcommand{\Ss}{\mathbb S}
\newcommand{\Ff}{\mathbb F}
\renewcommand{\d}{\mathrm d}
\newcommand{\tto}{\rightrightarrows}    
\newcommand{\rmap}{\longrightarrow}
\newcommand{\X}{\ensuremath{\mathfrak{X}}}
\newcommand{\F}{\ensuremath{\mathcal{F}}}   
\renewcommand{\O}{\ensuremath{\mathcal{O}}}
\newcommand{\G}{\mathcal{G}}            
\renewcommand{\H}{\mathcal{H}}          
\newcommand{\al}{\alpha}                
\newcommand{\be}{\beta}                 
\newcommand{\Lie}{\mathcal{L}}          
\renewcommand{\gg}{\mathfrak{g}}        
\newcommand{\lins}{\d_s}     
\DeclareMathOperator{\ad}{ad}           
\DeclareMathOperator*{\Bigoplus}{\bigoplus} 
\DeclareMathOperator{\Conn}{Conn}       
\DeclareMathOperator{\End}{End}         
\DeclareMathOperator{\Graph}{Graph}     
\DeclareMathOperator{\Hom}{Hom}         
\DeclareMathOperator{\hor}{hor}         
\DeclareMathOperator{\Hor}{Hor}         
\DeclareMathOperator{\im}{Im}           
\DeclareMathOperator{\jet}{j}           
\DeclareMathOperator{\Ker}{Ker}         
\DeclareMathOperator{\lap}{\bigtriangleup} 
\DeclareMathOperator{\modular}{mod}     
\DeclareMathOperator{\Ver}{Vert}        
\DeclareMathOperator{\vol}{vol}     
\renewcommand{\mod}{\modular}
\newcommand{\lin}{\text{lin}}           
\newcommand{\comment}[1]{}
\begin{document}

\title{Stability of symplectic leaves}
\author{Marius Crainic}
\address{Depart. of Math., Utrecht University, 3508 TA Utrecht, 
The Netherlands}
\email{M.Crainic@math.uu.nl}

\author{Rui Loja Fernandes}
\address{Depart.~de Matem\'{a}tica, 
Instituto Superior T\'{e}cnico, 1049-001 Lisboa, Portugal} 
\email{rfern@math.ist.utl.pt}

\thanks{MC was supported in part by the NWO ``Open Competitie'' project 613.000.425. 
RLF was supported in part by FCT/POCTI/FEDER and by grants POCI/MAT/55958/2004 and POCI/MAT/57888/2004.}

\begin{abstract}
We find computable criteria for stability of symplectic leaves of Poisson manifolds. Using Poisson geometry as an inspiration, we also give a general criterion for stability of leaves of Lie algebroids, including singular ones. This not only extends but also provides a new approach (and proofs) to the classical stability results for foliations and group actions.
\end{abstract}
\date{October 2008}
\maketitle

\setcounter{tocdepth}{1}
\tableofcontents

\section*{Introduction}            %
\label{sec:introduction}           %

A Poisson structure on a manifold $M$ is a (possibly singular) foliation
by symplectic leaves of $M$. In this paper we address the question of
stability of these symplectic leaves. More precisely, given a Poisson 
structure $\pi$ on a manifold $M$, we will say that a
symplectic leaf $S$ is \emph{stable} if all nearby Poisson structures
have a nearby symplectic leaf which is diffeomorphic to $S$. In this paper we 
will give criteria for stability. Our criteria can viewed as illustrations of 
the well-known principle
\[
\emph{infinitesimal stability}\quad \Longrightarrow \quad
\emph{stability}
\] 
and are reminiscent of stability criteria in other geometric settings, such as 
foliation theory, equivariant geometry, etc. Let us recall some of these
briefly.

In \emph{foliation theory}, the classical stability results of Reeb
\cite{Rb}, Thurston \cite{Th}, Langevin and Rosenberg \cite{LaRo}, can
all be reduced to the following criterion for stability: For a fixed manifold $M$,
the set of smooth foliations of $M$ has a natural topology. One calls a leaf
$L$ of a foliation $\F$ of $M$ \emph{stable} if every nearby foliation has a
nearby leaf diffeomorphic to $L$. Also, for a base point $x\in L$, let
$\rho:\pi_1(L,x)\to GL(\nu(L)_x)$ be the linear holonomy
representation and denote by $H^1(\pi_1(L,x),\nu(L)_x)$ the
corresponding first group cohomology. Then, for a compact leaf $L$,
the condition $H^1(\pi_1(L,x),\nu(L)_x)=0$ implies that $L$ is
stable.

For \emph{group actions} one finds a similar situation. For a fixed Lie group
$G$ and manifold $M$, the set of smooth actions of $G$ on $M$ has a
natural topology. One calls an orbit $\O$ of an action $\al:G\times M\to M$
\emph{stable} if every nearby action has a nearby orbit which is
diffeomorphic to $\O$. Results of Hirsch \cite{Hr} and Stowe \cite{St}
led to the following criterion for stability: Fix $x\in \O$, let
$\rho:G_x\to GL(\nu(\O)_x)$ be the linear normal isotropy
representation and denote by $H^1(G_x,\nu(\O)_x)$ the corresponding
first group cohomology. Then, for a compact orbit $\O$, the condition
$H^1(G_x,\nu(\O)_x)=0$ implies that $\O$ is stable.

In \emph{Poisson geometry}, the problem of stability of symplectic leaves is more subtle. One 
reason is the complicated singular behavior of Poisson structures in a neighborhood of a leaf. 
Another reason is that there are actually two interesting notions of stability leading to two different questions: when is a symplectic leaf stable as a manifold (\textbf{stability}) and when is it stable as a symplectic manifold (\textbf{strong stability})? In this paper we will discuss both notions of stability. Our first result states:

\begin{thmz}[Stability]
\label{theorem1} 
Let $(M, \pi)$ be a Poisson manifold and let $S$ be a compact symplectic leaf.
If the second relative Poisson cohomology group $H^{2}_{\pi}(M, S)$ vanishes, then $S$ is stable.
\end{thmz}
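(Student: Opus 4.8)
\emph{Setup and the leaf equation.} The plan is to turn ``$S$ is stable'' into a zero-finding problem for a smooth map between (Banach completions of) spaces of sections over the compact manifold $S$, and then to feed the hypothesis $H^2_\pi(M,S)=0$ into an implicit function theorem; this realizes \emph{infinitesimal stability} $\Rightarrow$ \emph{stability}, the vanishing of $H^2_\pi(M,S)$ being exactly the surjectivity of the relevant linearization. Concretely: since $S$ is compact, fix a tubular neighbourhood $U\supseteq S$ identified with a neighbourhood of the zero section of the normal bundle $\nu:=\nu(S)$, together with a splitting $TM|_S=TS\oplus\nu$. It is enough to produce, for every Poisson structure $\pi'$ that is $C^\infty$-close to $\pi$ on $U$, a symplectic leaf of $\pi'$ lying near $S$ and diffeomorphic to $S$. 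Parametrise the submanifolds of $U$ near $S$ by graphs $\Gamma_\sigma$ of small sections $\sigma\in\Gamma(\nu)$. Recall that a submanifold $N$ is a Poisson submanifold of $\pi'$ iff $(\pi')^\sharp$ annihilates the conormal bundle $\nu^*(N)$, that this is equivalent to the vanishing of the \emph{normal} component of $(\pi')^\sharp|_{\nu^*(N)}$, and that a compact connected Poisson submanifold with non-degenerate induced bivector is a single symplectic leaf. So set
\[
\mathcal{E}(\sigma,\pi')\ :=\ \big(\,\text{normal component of }\,(\pi')^\sharp|_{\nu^*(\Gamma_\sigma)}\,\big)\ \in\ \Gamma\big(S;\,\Hom(\nu^*,\nu)\big),
\]
where the (co)normal bundles of $\Gamma_\sigma$ have been transported to $S$ via the tubular structure. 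Then $\mathcal{E}(\sigma,\pi')=0$ forces $\Gamma_\sigma\cong S$ to be a Poisson submanifold of $\pi'$; non-degeneracy of the induced bivector is then automatic, being an open condition that holds at $(0,\pi)$ because $\pi|_S$ is symplectic, so $\Gamma_\sigma$ is a symplectic leaf of $\pi'$. Finally $\mathcal{E}(0,\pi)=0$ since $S$ is a leaf of $\pi$.

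\emph{Linearization $=$ infinitesimal stability.} Compute the partial derivative $L:=D_\sigma\mathcal{E}(0,\pi)\colon\Gamma(\nu)\To\Gamma\big(S;\Hom(\nu^*,\nu)\big)$; after the natural identifications, $L$ is a first-order operator determined by the $1$-jet of $\pi$ transverse to $S$ and by the symplectic form of $S$ (the Poisson analogue of the linear holonomy connection in the foliated case). The key algebraic fact to establish is that $\operatorname{coker}L\cong H^2_\pi(M,S)$ — equivalently, that the linearization of the leaf equation is the relevant piece of the relative Poisson complex; this would use the long exact sequence relating $H^\bullet_\pi(M,S)$, $H^\bullet_\pi(M)$ and the Poisson cohomology of the symplectic manifold $S$ (which is its de Rham cohomology). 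Granting it, the hypothesis $H^2_\pi(M,S)=0$ is exactly the surjectivity of $L$, i.e.\ the infinitesimal stability of $S$.

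\emph{Implicit function theorem and conclusion.} Complete $\Gamma(\nu)$ and $\Gamma(S;\Hom(\nu^*,\nu))$ to Banach spaces of sections of a fixed regularity (legitimate since $S$ is compact), verify that $\mathcal{E}$ extends to a $C^\infty$ map of Banach manifolds with partial derivative $L$ at $(0,\pi)$, and construct a bounded right inverse of $L$ over the compact $S$ (possible once $\operatorname{coker}L=H^2_\pi(M,S)=0$ and $L$ has closed range). The implicit function theorem then yields a neighbourhood $\mathcal V$ of $\pi$ and, for each $\pi'\in\mathcal V$, a small section $\sigma_{\pi'}$ with $\mathcal{E}(\sigma_{\pi'},\pi')=0$ and $\sigma_\pi=0$; regularity theory makes $\sigma_{\pi'}$ smooth. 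By the first paragraph, for $\pi'$ sufficiently close to $\pi$ the graph $\Gamma_{\sigma_{\pi'}}$ is a symplectic leaf of $\pi'$, it is diffeomorphic to $S$, and it lies near $S$. Hence $S$ is stable.

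\emph{Main obstacle.} The weight of the argument sits in two places. Conceptually, the step $\operatorname{coker}L\cong H^2_\pi(M,S)$ requires an explicit normal form for Poisson structures around a symplectic leaf (of Vorobjev type) and a careful matching of the resulting deformation data with the relative Poisson complex — this is exactly where the hypothesis becomes an honest statement of infinitesimal stability, and where the subtle identification of a local deformation group with the relative group $H^2_\pi(M,S)$ must be made precise. Technically, one must know that $L$ has closed range with the expected (vanishing) cokernel and a bounded right inverse, so that a Banach — or, should derivatives be lost, a tame Nash–Moser — implicit function theorem applies; I expect this analytic input to be the most delicate part of the write-up.
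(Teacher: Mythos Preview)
Your outline has the right architecture (tubular neighborhood, leaf equation for sections, linearize, invoke infinitesimal stability) and is close in spirit to the paper, but there are two genuine gaps, one minor and one structural.

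\textbf{The leaf equation is misidentified.} You write that ``$N$ is a Poisson submanifold iff $(\pi')^\sharp$ annihilates $\nu^*(N)$'' (correct), and then that this ``is equivalent to the vanishing of the \emph{normal} component of $(\pi')^\sharp|_{\nu^*(N)}$''. The second claim is false: vanishing of the normal component only says $(\pi')^\sharp(\nu^*)\subset TN$, i.e.\ $N$ is coisotropic. In codimension one your $\mathcal{E}$ is identically zero by skew-symmetry, yet not every graph is a leaf. The correct equation is the vanishing of the \emph{full} map $(\pi')^\sharp|_{\nu^*}$, which in the Vorobjev picture has two pieces: the vertical bivector $\theta^{\mathrm v}|_s\in\Gamma(\wedge^2\nu)$ \emph{and} the connection component $\Gamma_\theta|_s\in\Omega^1(S;\nu)$. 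So the target of $\mathcal{E}$ must be $\overline{\Omega}^2_S=\Gamma(\wedge^2\nu)\oplus\Omega^1(S;\nu)$, not $\Gamma(\Hom(\nu^*,\nu))$.

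\textbf{The cokernel of $L$ is not $H^2_\pi(M,S)$.} Even after fixing the target, the linearization $L=\overline{\d}_{\pi,0}\colon\Gamma(\nu)\to\overline{\Omega}^2_S$ is the first differential of the relative complex
\[
\overline{\Omega}^1_S \xrightarrow{\ \overline{\d}_\pi\ } \overline{\Omega}^2_S \xrightarrow{\ \overline{\d}_\pi\ } \overline{\Omega}^3_S,
\]
and $H^2_\pi(M,S)=\ker(\overline{\d}_\pi|_{\overline{\Omega}^2_S})/\im L$. Thus $H^2=0$ only gives $\im L=\ker(\overline{\d}_\pi|_{\overline{\Omega}^2_S})$, which is a \emph{proper} subspace of $\overline{\Omega}^2_S$ whenever $\overline{\d}_\pi$ is nonzero there. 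So $L$ is not surjective and a direct implicit function theorem fails. This is not a matter of loss of derivatives (Nash--Moser does not help); it is that the equation $c(\theta,s)=0$ is overdetermined.

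The paper's proof supplies precisely the missing ingredient: the Poisson condition $[\theta,\theta]=0$ forces the constraint $\overline{\d}_{\theta,s}\bigl(c(\theta,s)\bigr)=0$, so $c(\theta,s)$ always lands in the kernel of the next differential. One then minimizes $\Phi_\theta(s)=\|c(\theta,s)\|^2$ (restricted to the orthogonal complement of $\ker\overline{\d}_{\pi,0}$) and shows that a critical point $\tilde s$ satisfies $\overline{\lap}_{\theta,\tilde s}\,c(\theta,\tilde s)=0$ for a perturbed Laplacian $\overline{\lap}_{\theta,\tilde s}=\overline{\d}_{\pi,0}\overline{\d}_{\theta,\tilde s}^{*}+\overline{\d}_{\pi,0}^{*}\overline{\d}_{\theta,\tilde s}$. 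Ellipticity of the relative complex plus $H^2_\pi(M,S)=0$ make $\overline{\lap}_{\pi,0}$ invertible, hence $\overline{\lap}_{\theta,\tilde s}$ is invertible for nearby $(\theta,\tilde s)$, forcing $c(\theta,\tilde s)=0$. This Kuranishi-type trick is exactly what replaces the surjectivity you assumed.
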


Our second result deals with strong stability:

\begin{thmz}[Strong stability]
\label{theorem2} 
Let $(M, \pi)$ be a Poisson manifold and let $S$ be a compact symplectic leaf.
If the second restricted Poisson cohomology group $H^{2}_{\pi, S}(M)$ vanishes, then $S$ is 
strongly stable.
\end{thmz}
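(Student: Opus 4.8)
The plan is to recast strong stability as the solvability of a single nonlinear equation for an embedding $S\hookrightarrow M$, to identify its linearization with the differential of the restricted Poisson complex, and to solve it by a Moser--Newton iteration whose sole obstruction is $H^{2}_{\pi,S}(M)$. Since the statement concerns only a neighbourhood of $S$, I would first pass to a tubular neighbourhood $U$ of $S$, where it is convenient to put $\pi|_{U}$ into Vorobjev's coupling normal form, so that $S$ is the zero section and $\pi|_{S}=\beta:=\omega_{S}^{-1}\in\Gamma(\wedge^{2}TS)$ is the bivector dual to the leafwise symplectic form $\omega_{S}$. For an embedding $\phi\colon S\to M$ that is $C^{1}$-close to the inclusion $\iota$, put
\[
F(\phi,\pi'):=\pi'\circ\phi-\phi_{*}\beta\ \in\ \Gamma(\phi^{*}\wedge^{2}TM),\qquad x\longmapsto \pi'(\phi(x))-(\wedge^{2}\d_{x}\phi)\bigl(\beta(x)\bigr).
\]
If $F(\phi,\pi')=0$ then, along $N:=\phi(S)$, the bivector $\pi'$ is tangent to $N$ and nondegenerate there, so $N$ is open in a symplectic leaf of $\pi'$; as $S$ is compact and $\phi$ is close to $\iota$, $N$ is a closed submanifold, hence clopen in that leaf, hence equals it, and $\phi$ intertwines $\beta$ with $\pi'|_{N}$, i.e.\ $\phi\colon(S,\omega_{S})\to\bigl(N,(\pi'|_{N})^{-1}\bigr)$ is a symplectomorphism onto a leaf close to $S$. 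So it suffices to produce, for every Poisson structure $\pi'$ close to $\pi$, an embedding $\phi$ close to $\iota$ with $F(\phi,\pi')=0$.

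Next I would linearize. Deformations of $\iota$ are sections $X\in\Gamma(TM|_{S})$, and differentiating $F$ in the embedding variable at $(\iota,\pi)$, trivializing the moving target along a flow of an extension $\tilde X$ of $X$, yields (up to sign) $D_{\phi}F|_{(\iota,\pi)}(X)=\Lie_{\tilde X}\pi\big|_{S}=-\,\d_{\pi,S}(X)$, where $\bigl(\Gamma(\wedge^{\bullet}TM|_{S}),\ \d_{\pi,S}=[\pi,-]\big|_{S}\bigr)$ is the restricted Poisson complex computing $H^{\bullet}_{\pi,S}(M)$; this is well defined and independent of the extension precisely because $S$ is a leaf, so $\pi^{\sharp}$ kills the conormal bundle $N^{*}S$. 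On the other hand, writing $\eps:=\pi'-\pi$, the Jacobi identity $[\pi',\pi']=0$ reads $[\pi,\eps]=-\tfrac12[\eps,\eps]$, hence $\d_{\pi,S}(\eps|_{S})=-\tfrac12[\eps,\eps]\big|_{S}=O(\norm{\eps}^{2})$: the cochain $F(\iota,\pi')=\eps|_{S}$ is $\d_{\pi,S}$-closed modulo second order. Invoking $H^{2}_{\pi,S}(M)=0$ together with a homotopy operator $h$ for $\d_{\pi,S}$ in degree two, we get $\eps|_{S}=\d_{\pi,S}\bigl(h(\eps|_{S})\bigr)+O(\norm{\eps}^{2})$, so the corrected embedding $\phi_{1}:=\exp\bigl(-h(\eps|_{S})\bigr)$ satisfies $F(\phi_{1},\pi')=O(\norm{\eps}^{2})$.

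Iterating this correction reduces $F$ to ever higher order. Since $S$ is compact, I would run it as a Newton scheme --- with a smoothing/Nash--Moser regularization if the homotopy operator costs derivatives --- to obtain quadratic convergence to an embedding $\phi$ with $F(\phi,\pi')=0$ that is still $C^{\infty}$-close to $\iota$, hence an embedding, provided $\pi'$ is close enough to $\pi$. By the first paragraph this exhibits, near $S$, a symplectic leaf of $\pi'$ symplectomorphic to $(S,\omega_{S})$, so $S$ is strongly stable.

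The hard part is the analytic input to the iteration: one needs a bounded homotopy operator for the restricted Poisson complex over the compact leaf $S$, plus tame estimates strong enough to make the Moser iteration converge. Unlike the de Rham complex, this complex is \emph{not} elliptic --- its symbol degenerates along $N^{*}S$, since $\pi^{\sharp}|_{S}$ annihilates $N^{*}S$ --- so Hodge theory is not directly available. The way around this is to use Vorobjev's normal form, which expresses $\d_{\pi,S}$ explicitly in terms of $\omega_{S}$, an Ehresmann connection on $\nu(S)$ and its curvature over the compact base $S$, and to build both the homotopy operator (with $H^{2}_{\pi,S}(M)=0$ entering exactly as the exactness needed at the decisive step) and the required estimates from that description; the rest is the routine bookkeeping of a tame Newton iteration.
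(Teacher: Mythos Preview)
Your overall strategy---encode strong stability as the vanishing of a section $F(\phi,\pi')\in\Gamma(\phi^*\wedge^2 TM)$, observe that its linearization in $\phi$ is the restricted Poisson differential $\d_{\pi,S}$, and then solve by an iterative scheme whose obstruction is $H^2_{\pi,S}(M)$---is sound in outline, but it contains one factual error that both misdirects the analytic effort and leaves the proof incomplete at exactly the step you flag as ``the hard part''.

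You assert that the restricted Poisson complex $(\X^\bullet_S(M),\d_{\pi,S})$ is \emph{not} elliptic, because $\pi^\sharp$ annihilates the conormal bundle $N^*S$. This is incorrect. The complex $\X^\bullet_S(M)=\Gamma(\wedge^\bullet T_SM)$ is precisely the de Rham complex of the Lie algebroid $A=T^*_SM$ over the base $S$, whose anchor $\pi^\sharp\colon T^*_SM\to TS$ is \emph{surjective} (since $S$ is a leaf). The principal symbol of $\d_A$ at a nonzero $\xi\in T^*_xS$ is exterior multiplication by $\rho^*(\xi)\in A^*_x=T_xM$; transitivity makes $\rho^*$ injective, so $\rho^*(\xi)\ne 0$ and the symbol sequence is the exact Koszul complex. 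Hence the restricted complex \emph{is} elliptic, by the same argument as for ordinary de Rham. Your heuristic goes wrong because the symbol is parametrized by $T^*S$, not $T^*M$: the complex is over the compact base $S$, and the conormal directions of $S$ in $M$ never enter the symbol.

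This changes the analytic picture considerably. With ellipticity in hand you do not need Nash--Moser at all: a parametrix gives a bounded homotopy operator in any Sobolev scale, the hypothesis $H^2_{\pi,S}(M)=0$ upgrades to exactness of the completed sequence $\Omega^{1,(k)}_S\to\Omega^{2,(k-1)}_S\to\Omega^{3,(k-2)}_S$, and one can work with finitely many derivatives. The paper exploits this in a way different from your Newton scheme: it introduces the functional $\Phi_\theta(s,\beta)=\|c(\theta,s,\beta)\|^2_{(k-1)}$, where $c(\theta,s,\beta)=(\theta^{\mathrm v}|_s,\Gamma_\theta|_s,\Ff_\theta|_s-\omega_S+\d\beta)$ encodes exactly your equation $F$ together with a Moser primitive $\beta\in\Omega^1(S)$; shows that the origin is a strongly nondegenerate critical point of $\Phi_\pi$ on $(\Ker\d_{\pi,0})^\perp$; applies the ordinary implicit function theorem in Hilbert spaces to get critical points for nearby $\theta$; and finally uses a Laplacian argument (the invertibility of $\d_{\pi,0}\d_{\theta,s}^*+\d_{\pi,0}^*\d_{\theta,s}$ in degree two, again a consequence of ellipticity plus $H^2=0$) together with the exact structure equation $\d_{\theta,s}(\theta|_s)=0$ to force these critical points to be genuine zeros of $c$. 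No tame estimates and no loss-of-derivatives machinery enter.

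As written, then, your proposal has a genuine gap: the hard part you identify rests on a false premise about non-ellipticity, and you have not carried out any version of the required analytic step. The quickest repair is to drop the Nash--Moser apparatus, invoke ellipticity to obtain a parametrix (hence a bounded right inverse for $\d_{\pi,S}$ on cocycles once $H^2_{\pi,S}(M)=0$), and run either a straightforward contraction/implicit-function-theorem argument or the paper's variational one.
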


We emphasize that all the cohomology groups appearing in the statements of our theorems are computable and finite dimensional (the defining complexes are even elliptic).

It is also important to understand the relationship between stability and strong stability. 
The strong stability result (Theorem \ref{theorem2}) makes full use of the Poisson geometry setting and has no immediate consequences involving the other results we have mentioned. This may seem surprising at first sight, because one could expect the strong stability result (Theorem \ref{theorem2}) to imply the stability result (Theorem \ref{theorem1}). However, the complete statement of these results, which give the size of the space of leaves diffeomorphic to the original one, clarifies the situation: while Theorem \ref{theorem1} 
implies that nearby Poisson structures have ``many'' leaves diffeomorphic to the original one, Theorem \ref{theorem2} involves ``fewer'' leaves symplectomorphic to the original one. 

Nevertheless, we expect the two theorems to be related. This is due to the existence of a canonical map relating the relevant cohomology groups:
\[ \Phi:  H^{2}_{\pi, S}(M)\rmap H^{2}_{\pi}(M, S),\]
which led us to the following conjecture:

\begin{conj}
Let $(M,\pi)$ be a Poisson manifold and let $S$ be a compact symplectic leaf. If $\Phi$ vanishes, then any Poisson structure close enough to $\pi$ admits \emph{at least one} nearby symplectic leaf diffeomorphic to $S$. 
\end{conj}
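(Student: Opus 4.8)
The plan is to promote both Theorems \ref{theorem1} and \ref{theorem2} from the qualitative form ``$H^{2}=0 \Rightarrow$ stability'' to an obstruction-class form: for every Poisson structure $\pi'$ sufficiently $C^{\infty}$-close to $\pi$ one produces a canonical class $\mathrm{ob}(\pi')\in H^{2}_{\pi}(M,S)$ (resp.\ $\mathrm{ob}_{S}(\pi')\in H^{2}_{\pi,S}(M)$), depending continuously and, to first order, linearly on $\pi'-\pi$, such that the vanishing of $\mathrm{ob}(\pi')$ already forces the existence of a nearby leaf diffeomorphic to $S$ (resp.\ the vanishing of $\mathrm{ob}_{S}(\pi')$ forces a symplectomorphic one). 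Concretely, writing a candidate leaf as the image of a section $\sigma$ of the normal bundle $\nu(S)$ through a tubular neighbourhood, the condition ``$\sigma$ has image a symplectic leaf of $\pi'$'' becomes an equation $F(\sigma,\pi')=0$ with $F(0,\pi)=0$, and the residual $F(0,\pi')$ represents a well-defined class $\mathrm{ob}(\pi')\in H^{2}_{\pi}(M,S)$ (it is closed in the complex computing $H^{\bullet}_{\pi}(M,S)$ because $\pi'$ is Poisson), which is $O(|\pi'-\pi|)$ since $F(0,\pi)=0$. Carrying along the leafwise symplectic form together with a Moser argument upgrades this to $\mathrm{ob}_{S}$ with values in $H^{2}_{\pi,S}(M)$.

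The decisive input is then the naturality relation $\mathrm{ob}(\pi')=\Phi\bigl(\mathrm{ob}_{S}(\pi')\bigr)$ for all $\pi'$ near $\pi$. This should follow because ``being a symplectic leaf symplectomorphic to $(S,\omega_{S})$'' refines ``being a leaf diffeomorphic to $S$'' simply by forgetting the symplectic data, this forgetful map intertwines the two equations and their linearizations, and the induced map on the spaces of obstructions is exactly $\Phi$. Granting it, the conjecture is immediate: if $\Phi$ vanishes as a linear map then $\mathrm{ob}(\pi')=\Phi\bigl(\mathrm{ob}_{S}(\pi')\bigr)=0$ for every nearby $\pi'$, hence every nearby $\pi'$ has a nearby leaf diffeomorphic to $S$. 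Because $H^{2}_{\pi}(M,S)$ need not vanish, the linearization $D_{\sigma}F(0,\pi)$ is not onto, so one obtains not a manifold's worth of such leaves but only the single one coming from the solution $\sigma$ --- which is precisely the ``at least one'' in the statement, and also explains why the hypothesis $\Phi=0$ is genuinely weaker than that of Theorem \ref{theorem1}.

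I expect the real difficulty to be carrying this obstruction-class refinement through the convergence argument. The proofs of Theorems \ref{theorem1} and \ref{theorem2} presumably use a Nash--Moser/Newton iteration, and while $H^{2}=0$ makes \emph{every} step unobstructed, under the mere assumption $\Phi\equiv 0$ one must show that the weak iteration stays unobstructed. The natural device is to run the \emph{strong} iteration and at the $n$-th step correct only by the coboundary part of the residual, accepting that the strong obstruction $c_{n}\in H^{2}_{\pi,S}(M)$ survives; since the residual decays like $\eps^{2^{n}}$ this leftover is harmless, and --- crucially --- its only effect on the \emph{weak} problem is through $\Phi(c_{n})=0$, so the class of the weak residual vanishes at every step and the weak scheme converges in $C^{\infty}$. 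Making this interplay precise, supplying the tame estimates, and verifying the two ingredients that feed it --- ellipticity of the defining complexes (asserted in the paper) and the identification of the linearized obstruction space with $H^{2}_{\pi}(M,S)$ compatibly with the construction of $\Phi$ --- is the crux. A variant would connect $\pi$ to $\pi'$ by a path $\pi_{t}$ and seek a path of leaves $S_{t}$; solvability of the resulting time-dependent cohomological equation is again obstructed, at each $t$, by a class in $H^{2}_{\pi}(M,S)$ lying in $\im\Phi=0$, but the same iteration-and-estimates issue reappears, so the Newton approach above seems the more robust.
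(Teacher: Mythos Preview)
The statement you are trying to prove is explicitly declared an \emph{open conjecture} in the paper: the authors write ``At this point, we are not able to prove or disprove this conjecture.'' There is therefore no proof in the paper to compare against. What you have written is not a proof but a research outline, and you correctly identify its own gaps.

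Two concrete problems with the outline:

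\textbf{(1) Wrong guess about the underlying machinery.} You assume the proofs of Theorems~\ref{theorem1} and~\ref{theorem2} ``presumably use a Nash--Moser/Newton iteration.'' They do not. The paper's method is variational: one completes the relevant complexes in Sobolev norms, introduces the functional $\Phi_{\theta}(s)=\|c(\theta,s)\|_{(k-1)}^{2}$, and finds critical points via the ordinary implicit function theorem on Hilbert spaces. The hypothesis $H^{2}=0$ enters only through Proposition~\ref{prop:elliptic}, which guarantees that the ``Laplacian'' $\overline{\d}_{\pi,0}\overline{\d}_{\pi,0}^{*}+\overline{\d}_{\pi,0}^{*}\overline{\d}_{\pi,0}$ is an isomorphism on $\overline{\Omega}_{S}^{2,(k-1)}$; this is what forces $c(\theta,\tilde{s})=0$ at the critical point. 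If $H^{2}_{\pi}(M,S)\neq 0$ this Laplacian has a kernel, the critical point $\tilde{s}$ still exists, but one can no longer conclude $c(\theta,\tilde{s})=0$ --- only that $c(\theta,\tilde{s})$ is harmonic. Your obstruction class $\mathrm{ob}(\pi')$ is essentially this harmonic piece, and your task becomes to show it vanishes. The paper's method gives no mechanism for this.

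\textbf{(2) The iteration argument does not close.} Your proposed fix is to ``run the strong iteration and at the $n$-th step correct only by the coboundary part of the residual, accepting that the strong obstruction $c_{n}$ survives; since the residual decays like $\eps^{2^{n}}$ this leftover is harmless.'' But if you only kill the coboundary part, the residual does \emph{not} decay quadratically: the harmonic part $c_{n}$ persists at each step and has no reason to shrink. You seem to want the \emph{weak} residual to decay quadratically, justified by the claim that its class is always $\Phi$ of the strong residual's class. However, the cocycle condition $\d_{\theta,s}(\theta|_{s})=0$ from Proposition~\ref{linearization} holds in the complex $(\Omega_{S},\d_{\theta,s})$, which depends on both $\theta$ and the current approximation $s$; the map $\Phi$ is defined using $\d_{\pi,0}$. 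Identifying these along the iteration, uniformly in the Sobolev scales, with the loss-of-derivatives control needed for convergence, is exactly the hard analysis you have not supplied. The authors presumably saw this obstacle, which is why the statement remains a conjecture.
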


At this point, we are not able to prove or disprove this conjecture. However, Theorem \ref{thm:necessity:Poisson} below, which gives necessary conditions for stability and strong stability, provides some support for this conjecture. A similar conjecture for group actions, regarding stability of orbits (stability) versus stability of orbit types (strong stability), was made by Stowe \cite{St}. To our knowledge this conjecture is still open.

The other aspect of our work concerns the question of finding some common background to such stability results. We will show that all these results are, in principle, just reflections of a general stability theorem for orbits of \emph{Lie algebroids}, which can be stated as follows:

\begin{thmz}[Orbit stability]
\label{theorem3} 
Let $A$ be a Lie algebroid and let $L$ be a compact orbit of $A$ with
normal bundle denoted $\nu(L)$. If $H^1(A|_{L}; \nu(L))= 0$ then $L$ is stable, i.e.,
every Lie algebroid structure which is close enough to $A$ admits a nearby orbit which is
diffeomorphic to $L$.
\end{thmz}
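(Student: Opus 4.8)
The plan is to reduce the statement to a deformation/implicit-function argument in a tube around $L$, exactly in the spirit of the classical proofs for foliations and group actions, but phrased cohomologically via the Lie algebroid de Rham complex. First I would fix a tubular neighborhood $p\colon \nu(L)\to L$ and identify a neighborhood of $L$ in the base with (a neighborhood of the zero section in) $\nu(L)$; likewise, by choosing a connection, I would identify $A$ near $L$ with a bundle over $\nu(L)$ together with its anchor $\rho$. A nearby Lie algebroid structure $A'$ is then encoded by a nearby anchor $\rho'$ and a nearby bracket, and the orbit through a point near $L$ is the leaf of the (singular) distribution $\im\rho'$. The goal is: given $A'$ close to $A$, produce a section $\sigma\colon L\to \nu(L)$ whose image $S=\sigma(L)$ is tangent to $\im\rho'$ (hence a union of orbits, and by compactness and dimension count a single orbit diffeomorphic to $L$, with the diffeomorphism supplied by $p\circ\,(\text{inclusion})$, which is a submersion onto $L$ and a bijection on $S$).

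The second step is to set up the relevant deformation operator. Being tangent to $\im\rho'$ along a section $\sigma$ is a nonlinear equation $F(\sigma, A')=0$ in $\sigma$, where $F(0,A)=0$ since $L$ itself is an orbit of $A$. I would linearize $F$ in $\sigma$ at $(0,A)$: the linearization is a first-order differential operator $D\colon \Gamma(\nu(L))\to \Gamma(\Hom(\,\cdot\,,\nu(L)))$ whose kernel and cokernel are governed by the Lie algebroid cohomology of $A|_L$ with coefficients in the representation $\nu(L)$ (the Bott-type representation of $A|_L$ on the normal bundle — this is exactly where $H^1(A|_L;\nu(L))$ enters). Concretely, $D$ should be identifiable with (a piece of) the Chevalley–Eilenberg differential $d^1\colon C^1(A|_L;\nu(L))\to C^2(A|_L;\nu(L))$, so that the obstruction to solving $F(\sigma,A')=0$ to first order lands in $H^2$ and the freedom in the solution is $H^1$. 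The hypothesis $H^1(A|_L;\nu(L))=0$ forces $D$ to be injective modulo the (finite-dimensional) space of symmetries; since $L$ is compact the complex is elliptic, so $D$ has closed range and a bounded right inverse on a complement of its cokernel.

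The third step is the actual perturbation argument. Because $L$ is compact, $D$ (restricted appropriately) is Fredholm, and $H^1=0$ makes it have a bounded right inverse after quotienting out the automorphism freedom. I would then invoke the implicit function theorem in suitable Banach/Fréchet spaces (Sobolev completions, or a Nash–Moser scheme if tame estimates are needed because of the loss of derivatives in $D$), applied to $F(\sigma,A')=0$ viewed as an equation for $\sigma$ depending on the parameter $A'$: for $A'$ sufficiently $C^k$-close to $A$ there is a unique small solution $\sigma=\sigma(A')$, depending continuously on $A'$, with $\sigma(A)=0$. The image $S=\sigma(A')(L)$ is then an orbit of $A'$ close to $L$ and diffeomorphic to it.

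The main obstacle I anticipate is analytic rather than algebraic: controlling the nonlinear equation $F(\sigma,A')=0$ uniformly for $A'$ in a neighborhood of $A$, given that $D$ loses derivatives, and keeping track of the finite-dimensional automorphism freedom (the isotropy Lie algebra of $A$ along $L$) so that uniqueness is genuine. In other words, the heart of the proof is a clean elliptic-estimate/implicit-function package showing that the vanishing of $H^1(A|_L;\nu(L))$ — which kills the first-order obstruction — actually propagates to a solution of the full nonlinear problem; the rest (tubular neighborhoods, identifying the linearization with the Chevalley–Eilenberg differential, the dimension count ensuring $S$ is a single orbit) is comparatively routine. I would also take care to state exactly which topology on the space of Lie algebroid structures is used, since "close enough" must be made precise (a $C^k$-topology on the anchor and bracket over a fixed compact neighborhood of $L$).
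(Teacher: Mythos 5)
There is a genuine gap, and it sits exactly where the content of the theorem lies. First, your cohomological bookkeeping is shifted by one degree: the unknown is a section of $\nu(L)$, i.e.\ a degree-zero cochain, so the linearization of your equation $F(\sigma,A')=0$ at $(0,A)$ is the differential $d^0\colon C^0(A|_L;\nu(L))\to C^1(A|_L;\nu(L))$, not (a piece of) $d^1$. Its kernel is $H^0(A|_L;\nu(L))=\Gamma_{\text{flat}}(\nu(L))$, which in general does not vanish --- in the complete statement (Theorem~\ref{theorem33}) it is precisely the parameter space of the family of nearby leaves --- so $H^1=0$ does not make the linearization ``injective modulo symmetries''; the role of $H^1=0$ is to kill the obstruction, which lives in $H^1$, not in $H^2$.

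Second, and more seriously, the equation $F(\sigma,A')=0$ is overdetermined: it asks that an element of $\Omega^1(A|_L;\nu(L))$ (the restriction of the deformed bracket/anchor along the graph of $\sigma$) vanish, while the image of the linearization $d^0$ is, under the hypothesis $H^1=0$, exactly $\ker d^1$, which in general has infinite codimension in $\Omega^1(A|_L;\nu(L))$. Hence no version of the implicit function theorem --- Banach or Nash--Moser --- applies directly to $F$: the linearization has no bounded right inverse, even after quotienting by its kernel. One must explain why the component of $F(\sigma,A')$ transverse to the image of $d^0$ vanishes automatically, and this is the heart of the paper's proof: linearizing $[\ell,\ell]=0$ along $s$ yields the Bianchi-type identity $\d_{\ell,s}(\ell|_s)=0$, so the error term always lies in the kernel of the deformed degree-one differential. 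The paper then replaces your direct IFT by a variational scheme: it minimizes $\|\ell|_s\|^2$ in a fixed Sobolev norm over the orthogonal complement $W$ of $K=\ker d^0$, obtains critical points $\tilde s$ (one for each parameter in $K$) from the ordinary implicit function theorem with parameters applied to the gradient --- a map of a fixed Hilbert space to itself, so there is no loss of derivatives and no need for Nash--Moser --- and finally shows that the critical-point equation together with the Bianchi identity force $\ell|_{\tilde s}$ to be annihilated by the operator $\d_{\ell_0,0}\d^{*}_{\ell,\tilde s}+\d^{*}_{\ell_0,0}\d_{\ell,\tilde s}$, which is invertible for $\ell$ near $\ell_0$ and $\tilde s$ small because the complex $\Omega^\bullet(A|_L;\nu(L))$ is elliptic and $H^1(A|_L;\nu(L))=0$. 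Your sketch contains no counterpart to this mechanism, so the hypothesis $H^1(A|_L;\nu(L))=0$ is never actually brought to bear on the full nonlinear problem.
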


More complete statements of these three theorems, as well as explanations concerning the various cohomology groups appearing in the results, will be given in the body of the paper. In particular, these more complete versions will describe the size of the space of leaves diffeomorphic to the original one. 

For each specific class, Theorem \ref{theorem3} immediately gives a computable criterion for stability. However, using our approach and the specifics of the particular class being studied, one may obtain better criteria. In fact, for each of the situations considered above, the precise relationship between the various results we have mentioned, is a follows:

\subsubsection*{Foliations} In this case, Theorem \ref{theorem3} implies the classical stability theorem for foliations. This requires a passage from algebroid cohomology to group cohomology, that is standard in this case, and which will be explained in full generality in Section \ref{subsec:algbrd:grp:cohom}. Since a Lie algebroid which is close enough to (the Lie algebroid of) a foliation must be a foliation itself, the two results coincide. 

\subsubsection*{Group actions} Theorem \ref{theorem3} also implies the stability result for orbits of actions of $1$-connected (i.e., connected and simply connected) Lie groups. Without the $1$-connectedness assumption Theorem \ref{theorem3} yields a result of slightly more infinitesimal nature, which is more restrictive but also stronger than the result mentioned above: it insures the stability of orbits inside the world of infinitesimal Lie algebra actions on manifolds. One reason for this is that the Lie algebroid associated with an infinitesimal action can have arbitrary close Lie algebroids which are not associated with a Lie algebra action.

\subsubsection*{Poisson structures} Theorem \ref{theorem3} applied to Poisson geometry implies stability of leaves under the infinitesimal condition that a certain first restricted Poisson cohomology group with coefficients in the normal bundle $H^{1}_{\pi, S}(M; \nu(S))$ vanishes. This is quite different from our stability theorem in Poisson geometry (Theorem \ref{theorem1}), the reason being that one can find many Lie algebroid structures close to $T^*M$ which are not associated to a Poisson structure. Hence, unlike the case of foliations and group actions, our stability theorem in Poisson geometry \emph{is not} a direct consequence of the Lie algebroid result. In fact, Poisson geometry provides a first example of a geometric structure where the general stability theorem can be improved. Indeed, we will see that there is a canonical inclusion map:
\[ H^{2}_{\pi, S}(M) \hookrightarrow H^{1}_{\pi, S}(M;\nu(S)),\]
so Theorem \ref{theorem1} is an improvement over Theorem \ref{theorem3} (see the examples in Section \ref{sec:examp}). 
\vskip 10 pt

The proofs of the stability theorems stated above are quite different in nature from the proofs of the classical stability theorems for foliations and group actions. For these, the heart of the proof is an argument, due to Thurston, which uses Ascoli's Theorem to construct a sequence of group 1-cocycles that converge to a cocycle. In our case, the proof consists in constructing a functional, parametrized by the nearby Poisson structures, whose absolute minima are the nearby leaves diffeomorphic (or symplectomorphic) to the original one. The problem of finding extrema of this functional is solved by a geometric argument, rather than the usual method that requires constructing some minimizing sequence (that would correspond to Thurston's sequence of group cocycles) and the use of a priori estimates. An outline of our method of proof can be found in Section \ref{sec:outline} below.

This paper is organized as follows. In Section \ref{sec:results}, after introducing the necessary background material, we state the complete statments of our main results. We also show how the classical theorems for foliations and group actions follow from our stability result for Lie algebroids. In Section \ref{sec:examp}, we illustrate our results with several examples and show that our criteria are computable in many situations. In Section \ref{sec:outline}, we describe the main steps in our method of proof.  The remaining sections of the paper contain the proofs of our results.

In closing this introduction, we remark that the (much easier) case of singular points (i.e., zero dimensional leaves) was treated by us in \cite{CrFe2}, and then extended to higher order singularities by Dufour and Wade in \cite{DuWa}. 

\vskip 10pt
\noindent
\textbf{Conventions.} All our manifolds are assumed to be Hausdorff and second countable.
Our convention on the \textbf{Schouten bracket} on the space $\X^\bullet(M)=\Gamma(\wedge^\bullet TM)$ of multivector fileds on a manifold $M$ is such that, when applied to vector fields it is the usual Lie bracket, when applied to a vector field and a function it is the Lie derivative along the vector field and, furthermore, satisfies the following properties 
\begin{align*}
[X, Y]&= -(-1)^{(p-1)(q-1)}[Y, X],\\
[X, Y\wedge Z]&= [X, Y]\wedge Z + (-1)^{(p-1)q}Y\wedge [X, Z],
\end{align*}
for all $X\in \X^p(M)$, $Y\in \X^q(M)$, $Z\in \X^r(M)$. 

Another convention is that we always work over $\mathbb{R}$. For instance, all the
vector bundles will be real vector bundles, all the Hilbert spaces will be over $\mathbb{R}$ (as are the Sobolev spaces of sections of a (real) vector bundle) and $C^{\infty}(M)$ denotes the space of smooth functions $f: M\to \mathbb{R}$. Also, all our elliptic complexes are over $\mathbb{R}$. The standard properties of elliptic complexes (such as the existence of a parametrix) are usually formulated over $\mathbb{C}$ but the real version follows formally by a complexification argument (see, e.g., \cite{AS}, where a real vector space $V$ is identified with the fixed point set of the anti-linear involution $\tau: V_{\mathbb{C}}\to V_{\mathbb{C}}$ on its complexification; then real elements, real operators, etc., are recovered as the ones which are fixed by $\tau$).

\vskip 10pt
\noindent
\textbf{Acknowledgments.}
The authors would like to thank the following institutions for their hospitality and support during the 4 years that this project took shape: Centre de Recerca Matem\`atica, Erwin Schr\"odinger International Institute for Mathematical Physics, International Center for Theoretical Physics-Trieste, Universit\'e Paul Sabatier-Toulouse, Keio University-Tokyo, University of Luxembourg, University of Utrecht (R.L.F) and Instituto Superior T\'{e}cnico (M.C.).  We also thank the anonymous referees for their remarks which helped improving this paper.

\section{Main results}                  %
\label{sec:results}                     %

\subsection{Poisson structures} 
Recall (see, e.g., \cite{Wein1}) that a \textbf{Poisson structure} on a manifold $M$ can be described 
either by a Lie bracket $\{\cdot,\cdot\}$ on the space $C^\infty(M)$ satisfying the Leibniz
identity:
\[ \{f,gh\}=\{f,g\}h+g\{f,h\},\quad f,g,h\in C^\infty(M),\]
or by a bivector field $\pi\in\X^2(M)$ with
the property that
\begin{equation}
\label{eq:schouten:bracket}
[\pi,\pi]=0,
\end{equation}
where $[\cdot, \cdot]$ is the Schouten bracket (see our conventions in the introduction). 
These two ways of looking at Poisson structures are related by: $\pi(\d f, \d g)=\{f, g\}$. 
We will often interpret $\pi$ as a vector bundle map $\pi^\#:T^*M\to TM$.

The description of Poisson structures in terms of bivectors allows us to talk about Poisson structures which are $C^{\kappa}$-close ($1\le \kappa\le+\infty$) to a given one: this refers to the \textbf{$C^{\kappa}$ compact-open topology} (also known as the \emph{$C^{\kappa}$ weak topology}) in $\X^2(M)$, i.e., the topology of uniform convergence over compact subsets of maps and its derivatives up to order $\kappa$ (see, e.g., \cite{Hr2} for the precise definition).

In a Poisson manifold $(M, \pi)$ any function $f\in C^{\infty}(M)$ induces a vector field
\[ X_{f}:= \{f, \cdot \},\]
called the \textbf{Hamiltonian vector field} induced by $f$. Moving along Hamiltonian flows gives a partition of  $M$ into the so called \textbf{symplectic leaves}: two points of $M$ are in the same leaf if and only if they can be reached by following the flow of (one or more) Hamiltonian vector fields. The leaves carry a canonical smooth structure which make them into regular immersed submanifolds of $M$. Their name is due to the fact that they are the maximal connected integral leaves of the singular integrable distribution on $M$ defined by the image of $\pi^{\sharp}$. It follows that each such leaf $S$ has a canonical symplectic form
\[ \omega_S(X_f,X_g)=\{f,g\} \quad \text{(on $S$)}.\] 
Hence, roughly speaking, a Poisson structure is a ``singular symplectic foliation''. However,
there is interesting geometry also in the transverse direction. For any symplectic leaf $S$, each fiber of the normal bundle
\[ \nu_{S}:= T_SM/TS\]
carries a canonical linear Poisson structure (the linearization of $\pi$ in the transverse direction). Equivalently, the conormal bundle $\nu_{S}^{*}$ is a Lie algebra bundle: at each $x\in M$, the bracket can be described by
\[ [\d_xf, \d_xg]= \d_x\{f, g\},\]
for $f,g$ locally defined functions around $x$ and constant on $S$. 

The symplectic form on the leaf $S$ and the transverse linear Poisson structures on $\nu(S)$ can be assembled together into a single Poisson structure $\jet^1_S\pi$ on a tubular neighborhood of $S$, which we will call the \textbf{first jet approximation} to $\pi$ along $S$. We will describe later in Section \ref{sec:linear:approx} how $\jet^1_S\pi$ is constructed. 

For a Poisson manifold $(M, \pi)$, one defines the \textbf{Poisson cohomology} $H^{\bullet}_{\pi}(M)$ as the cohomology of the complex of multivector fields $(\X^{\bullet}(M), \d_{\pi})$, where:
\[ \d_{\pi}:=[\pi, \cdot].\]
Fixing a symplectic leaf $S$, one obtains a complex $(\X^{\bullet}_{S}(M),\d_{\pi}|_{S})$ 
where $\X^{\bullet}_{S}(M)= \Gamma (\Lambda^{\bullet} TM|_{S})$ is the space of 
multivector fields along $S$ (it is not difficult to see that $\d_{\pi}$ does restrict to $S$). The corresponding cohomology is called the \textbf{Poisson cohomology restricted to $S$}(\footnote{This cohomology was introduced in \cite{GiLu}, where it was called the \emph{relative Poisson cohomology}. We reserve this name for yet another cohomology we will be using.}) and will be denoted  by $H^{\bullet}_{\pi, S}(M)$. There is an obvious restriction map:
\[ H^\bullet_\pi(M)\rmap H^\bullet_{\pi, S}(M).\]

Note that $\X^{\bullet}_{S}(M)$ contains the de Rham complex of $S$ as a subcomplex, with the inclusion induced by dualizing the anchor map $\pi^{\sharp}: T^{*}_{S}M\to TS$. We set:
\[ \X^{\bullet}(M, S):=\X^\bullet_S(M)/\Omega^\bullet(S),\]
and the resulting cohomology is called the \textbf{Poisson cohomology relative to $S$}, denoted $H^{\bullet}_{\pi}(M, S)$. The map induced by the projection will be denoted 
\[ \Phi: H^{\bullet}_{\pi, S}(M)\rmap H^{\bullet}_{\pi}(M, S).\]
These are the cohomologies appearing in the first two theorems in the introduction, for which we can now state the complete versions:

\begin{thm}
\label{theorem11}   
Let $S$ be a $n$-dimensional compact symplectic leaf of a Poisson manifold $(M, \pi)$ which satisfies
$H^{2}_{\pi}(M, S)= 0$, and let $\kappa>\frac{n}{2}+1$ be an integer. For any neighborhood $V$ of
$S$ there exists a neighborhood $\mathcal{V}$ of $\pi$ in the $C^\kappa$-topology such that
any Poisson structure $\theta$ in $\mathcal{V}$ has a family of symplectic leaves in $V$, diffeomorphic to $S$, smoothly parametrized by $H^1_\pi(M,S)$
and depending continuously on $\theta$. 
\end{thm}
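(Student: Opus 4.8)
The plan is to set up a deformation/stability argument in which nearby symplectic leaves diffeomorphic to $S$ are realized as zeros of a nonlinear operator whose linearization at $\pi$ is governed by the relative Poisson complex $(\X^\bullet(M,S),\d_\pi|_S)$. First I would fix a tubular neighborhood $U\cong\nu_S$ of $S$ and parametrize embedded submanifolds $C^\kappa$-close to $S$ by sections $u\in\Gamma(\nu_S)$ (graphs), so that the leaf-candidate $S_u$ is the image of the corresponding section. Given a Poisson structure $\theta$ near $\pi$, the condition that $S_u$ be a (union of) symplectic leaf(s) of $\theta$ is that $\theta^\#(T^*_{S_u}M)\subseteq TS_u$, i.e. the component of $\theta^\#$ normal to $S_u$ vanishes. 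This defines a smooth map
\[
\mathcal{F}: \mathcal{V}\times \Gamma^{k}(\nu_S) \rmap \Gamma^{k-1}(E),
\]
($E$ an appropriate vector bundle over $S$, $\Gamma^k$ denoting $W^{k,2}$-sections) with $\mathcal{F}(\pi,0)=0$, and a leaf of $\theta$ diffeomorphic to $S$ near $S$ corresponds to a solution $u$ of $\mathcal{F}(\theta,u)=0$. The heart of the argument is to show that the partial derivative $D_u\mathcal{F}(\pi,0)$ is, after identifications, the differential $\d_\pi|_S$ in degree $1$ of the restricted complex composed with projection onto the relative complex — so that the hypothesis $H^2_\pi(M,S)=0$ makes this linearized operator have closed range with cokernel $H^2_\pi(M,S)=0$, hence be surjective with finite-dimensional kernel $H^1_\pi(M,S)$.

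Next I would invoke the ellipticity claimed in the paper: the relative complex $(\X^\bullet(M,S),\d_\pi|_S)$ is elliptic over the compact manifold $S$, so Hodge theory applies — there is a Green's operator, the cohomology is finite-dimensional, and one has an $L^2$-orthogonal (Hodge) decomposition $\Gamma^k(\nu_S)=\mathcal{H}^1\oplus\im(\d_\pi^*)\oplus\im(\d_\pi)$ etc. Restricting $\mathcal{F}$ to the complement of the kernel, $D_u\mathcal{F}(\pi,0)$ becomes an isomorphism onto a closed complement of $\im D_u\mathcal{F}$; but since $H^2=0$ it is onto all of $\Gamma^{k-1}(E)$. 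I would then apply the implicit function theorem (in the Banach—i.e. Sobolev $W^{\kappa,2}$—setting, which is why $\kappa>\frac{n}{2}+1$ is imposed: Sobolev multiplication and embedding into $C^1$) to solve $\mathcal{F}(\theta,u)=0$ for $u=u(\theta,h)$ depending smoothly on $\theta\in\mathcal{V}$ and on the finite-dimensional parameter $h\in H^1_\pi(M,S)$ (the kernel direction), with $u(\pi,0)=0$. Elliptic regularity then upgrades the solutions from $W^{\kappa,2}$ to smooth, so the $S_{u(\theta,h)}$ are genuine smooth submanifolds; shrinking $\mathcal{V}$ and $V$ if needed, each $S_{u(\theta,h)}\subseteq V$ is an embedded closed submanifold of $\theta^\#$-invariant type, hence a single symplectic leaf diffeomorphic to $S$. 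Continuity in $\theta$ and smoothness in $h$ are read off from the implicit function theorem.

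Two points I would need to handle with care. The first is purely a bookkeeping/identification issue but is genuinely the crux: verifying that $D_u\mathcal{F}(\pi,0)$ really is the relative Poisson differential. The normal component of $\theta^\#$ along a graph $S_u$ has a first-order Taylor expansion in $u$ whose leading term should be $[\pi,\widetilde u]$ projected to the normal directions and modulo the de Rham subcomplex $\Omega^\bullet(S)$ — the quotient by $\Omega^\bullet(S)$ appearing precisely because reparametrizing $S$ within itself (the Hamiltonian/tangential ambiguity) does not change the leaf. Getting the correct complex, rather than the larger restricted complex $(\X^\bullet_S(M),\d_\pi|_S)$, is what lets us use $H^2_\pi(M,S)$ instead of $H^2_{\pi,S}(M)$, and it is exactly this gain that distinguishes Theorem~\ref{theorem11} from the generic Lie algebroid statement. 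The second point is that $\mathcal{F}(\theta,\cdot)=0$ a priori only says $S_u$ is tangent to the characteristic distribution of $\theta$; one must argue that for $\theta$ close to $\pi$ this forces $S_u$ to be contained in (indeed, to be) a single symplectic leaf of $\theta$ rather than merely an integral submanifold of a possibly-lower-rank leaf — this uses that $S$ is compact, that the rank of $\theta^\#$ along $S_u$ is at least that of $\pi$ along $S$ by semicontinuity (for $\theta$ close), and at most $\dim S$ by the invariance condition, hence exactly $\dim S$. I expect the identification of the linearization with $\d_\pi|_S$ modulo $\Omega^\bullet(S)$, together with checking that the relevant operator between Sobolev spaces is genuinely elliptic of the right order so Hodge theory and the implicit function theorem both apply, to be the main obstacle; the rest is a by-now-standard elliptic-plus-IFT machine.
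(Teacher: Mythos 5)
Your reduction to a graph equation over a tubular neighborhood, the identification of the natural target with the degree--$2$ part of the relative complex, and the identification of the linearization at $(\pi,0)$ with the relative differential are all in line with the paper. But the central analytic step is wrong: you claim that $H^2_\pi(M,S)=0$ makes the cokernel of $D_u\mathcal{F}(\pi,0)$ vanish, hence the linearization surjective, so that the implicit function theorem solves $\mathcal{F}(\theta,u)=0$. The cokernel of $\d_\pi\colon \X^1(M,S)\to\X^2(M,S)$ is not $H^2_\pi(M,S)$; the hypothesis $H^2_\pi(M,S)=0$ only says that the image equals the kernel of the next relative differential $\X^2(M,S)\to\X^3(M,S)$, which is a proper closed subspace of the target of infinite codimension in general (the complex does not stop in degree $2$). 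So the equation $\mathcal{F}(\theta,u)=0$ is overdetermined relative to the unknown $u$, the implicit function theorem cannot be applied to it directly, and indeed for an arbitrary nearby bivector (not Poisson) there is no solution at all. Restricting the target to $\ker(\d_\pi)$ in degree $2$ does not fix this, because for $\theta\neq\pi$ the defect $c(\theta,s)$ does not lie in that fixed subspace: it lies in the kernel of a differential $\overline{\d}_{\theta,s}$ that moves with $(\theta,s)$.

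The missing ingredient is precisely the a priori identity $\overline{\d}_{\theta,s}\bigl(c(\theta,s)\bigr)=0$, obtained by linearizing the structure equations $[\theta,\theta]=0$ along $s$ (Proposition \ref{linearization}); this is where the Poisson condition on $\theta$ enters and it is what makes the overdetermined problem solvable. The paper then does not solve the equation by IFT at all: it minimizes the functional $\Phi_\theta(s)=\|c(\theta,s)\|^2_{(k-1)}$, applies the implicit function theorem only to its gradient to produce critical points $\tilde{s}_{\theta,s}$ on $s+W$, $W=(\ker\overline{\d}_{\pi,0})^\perp$ (this needs only closed range of $\overline{\d}_{\pi,0}$, i.e.\ nondegeneracy of the Hessian transverse to the kernel), and then combines the critical--point equation $\overline{\d}_{\pi,0}\overline{\d}^{\,*}_{\theta,\tilde{s}}(c)=0$ with the identity $\overline{\d}_{\theta,\tilde{s}}(c)=0$ to conclude that $c$ is killed by the mixed Laplacian $\overline{\d}_{\pi,0}\overline{\d}^{\,*}_{\theta,\tilde{s}}+\overline{\d}^{\,*}_{\pi,0}\overline{\d}_{\theta,\tilde{s}}$. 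It is the invertibility of this operator at $(\pi,0)$ in degree $2$ — and only this — that uses $H^2_\pi(M,S)=0$ (via ellipticity of the relative complex), and by openness it persists for $(\theta,s)$ close to $(\pi,0)$, forcing $c(\theta,\tilde{s})=0$; elliptic regularity then gives smoothness of $\tilde{s}$. Your remaining points (parametrization by the kernel $H^1_\pi(M,S)=\Gamma_{\text{flat}}(\nu^0_S)$, and the leaf-versus-invariant-submanifold issue, which the paper settles via horizontal nondegeneracy in Proposition \ref{criteria-leaf}) are fine, but without the compatibility identity and the Laplacian argument the proof as proposed does not go through.
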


The parameter space has a nice geometric interpretation: $H^1_\pi(M,S)$ coincides with the space of leaves of the first jet approximation $j^1_S\pi$ which project diffeomorphically to $S$. Moreover, we have $H^1_\pi(M,S)=\Gamma_\text{flat}(\nu^0(S))$, the space of flat sections of the subbundle of $\nu(S)$ where the linear transverse Poisson structure vanishes (see Corollary \ref{cor:leaves} (i)).

\begin{thm}
\label{theorem22}   
Let $S$ be a $n$-dimensional compact symplectic leaf of a Poisson manifold $(M, \pi)$ which satisfies
$H^{2}_{\pi, S}(M)= 0$, and let $\kappa>\frac{n}{2}+1$ be an integer. For any neighborhood $V$ of
$S$ there exists a neighborhood $\mathcal{V}$ of $\pi$ in the $C^\kappa$-topology such that
any Poisson structure $\theta$ in $\mathcal{V}$ has a family of symplectic leaves in $V$, symplectomorphic to $S$, smoothly parametrized by the image of $\Phi:H^1_{\pi,S}(M)\to H^1_\pi(M,S)$ and depending continuously on $\theta$. 
\end{thm}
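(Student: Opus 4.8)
The plan is to deduce Theorem \ref{theorem22} from the mechanism behind Theorem \ref{theorem11} rather than redoing the analysis from scratch. First I would recall the set-up of the proof of Theorem \ref{theorem11}: for a Poisson structure $\theta$ close to $\pi$, the nearby leaves diffeomorphic to $S$ are obtained as zeros (absolute minima) of a functional defined on sections of a tubular neighborhood $\nu(S)\cong V$, and the family of solutions is cut out and parametrized by the relative cohomology $H^1_\pi(M,S)$. The key point to isolate is that the extra ``symplectic'' information — whether a nearby leaf $S_\theta$ diffeomorphic to $S$ is in fact symplectomorphic to $(S,\omega_S)$ — is governed by the class in $H^1_{\pi,S}(M)$ rather than in $H^1_\pi(M,S)$, because the restricted complex $(\X^\bullet_S(M),\d_\pi|_S)$ keeps track of the de Rham data on $S$ that the relative complex kills. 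Concretely, the leaf $S_\theta$ carries its own symplectic form $\omega_{S_\theta}$, and pulling back by the diffeomorphism $S\xrightarrow{\sim}S_\theta$ one gets a closed 2-form on $S$ cohomologous to $\omega_S$ precisely when the obstruction lives in the image of $\Phi$; then a Moser argument upgrades ``cohomologous'' to ``symplectomorphic''.

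So the steps, in order, would be: (1) restate the implicit-function/geometric-minimization output of Theorem \ref{theorem11} in a form that records not only the leaf $S_\theta$ but the induced symplectic form $\omega_{S_\theta}$ and the dependence of both on $\theta$ and on the parameter in $H^1_\pi(M,S)$; (2) show that the condition $H^2_{\pi,S}(M)=0$ lets one solve the corresponding deformation problem at the level of the restricted complex, so that the subfamily of nearby leaves which are \emph{symplectomorphic} to $S$ (not merely diffeomorphic) is nonempty and is parametrized exactly by $\im\Phi\subseteq H^1_\pi(M,S)$; (3) run Moser's lemma on the compact manifold $S$ to turn the cohomological statement $[\omega_{S_\theta}]=[\omega_S]$ into an actual symplectomorphism, using compactness of $S$ and $C^\kappa$-closeness to keep the Moser vector field small and its flow defined for unit time; (4) check the continuity-in-$\theta$ and smoothness-in-parameter claims, which follow by the same elliptic-estimates bookkeeping as in Theorem \ref{theorem11} since all the operators involved are the restrictions to $S$ of the ones already used.

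The main technical work — and the step I expect to be the real obstacle — is (2): identifying the obstruction to finding a symplectomorphic (rather than merely diffeomorphic) nearby leaf with a class in $H^2_{\pi,S}(M)$, and identifying the resulting moduli with $\im\Phi$. This requires carefully comparing the three complexes $\X^\bullet(M)$, $\X^\bullet_S(M)$, and $\X^\bullet(M,S)$ sitting in the short exact sequence $0\to\Omega^\bullet(S)\to\X^\bullet_S(M)\to\X^\bullet(M,S)\to 0$, and tracking how the de Rham piece $\Omega^\bullet(S)$ — whose degree-2 part carries $[\omega_S]$ — enters the linearized equation. The connecting homomorphism of this sequence is, up to sign, what produces the map $\Phi$, and the vanishing of $H^2_{\pi,S}(M)$ is exactly what is needed so that a leaf which is diffeomorphic to $S$ with the ``correct'' relative class can be corrected to one with the correct \emph{restricted} class, i.e., one for which $[\omega_{S_\theta}]=[\omega_S]$. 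One subtlety to be careful about is that $\omega_{S_\theta}$ depends on $\theta$, so the cohomology class one must hit is itself varying; but since $H^2(S;\Rr)$ is discrete-in-families and $\theta$ is close to $\pi$, continuity forces $[\omega_{S_\theta}]$ to stay equal to $[\omega_S]$, which is what makes the argument go through. Steps (1), (3), (4) are then essentially adaptations of material already in place: (3) is classical Moser theory, and (1) and (4) are re-readings of the proof of Theorem \ref{theorem11} restricted to the subcomplex, so I would not expect new difficulties there.
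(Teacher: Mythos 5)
Your plan is built on first invoking the machinery of Theorem \ref{theorem11} (to get the family of nearby leaves diffeomorphic to $S$, parametrized by $H^1_\pi(M,S)$) and only afterwards selecting or correcting the symplectomorphic ones. This does not work under the hypotheses of Theorem \ref{theorem22}: the key analytic input in the proof of Theorem \ref{theorem11} is that the ``Laplacian'' $\overline{\lap}_{\pi,0}$ on $\overline{\Omega}^{2,(k-1)}_S$ is invertible, and this uses exactness of the \emph{relative} complex in degree $2$, i.e.\ $H^2_\pi(M,S)=0$. Theorem \ref{theorem22} only assumes $H^2_{\pi,S}(M)=0$, and neither vanishing implies the other (Example \ref{ex-stable} has $H^2_\pi(M,S)=0$ but $H^2_{\pi,S}(M)\cong\Rr$; Example \ref{ex:computations} shows the two conditions are genuinely independent of each other in general). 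So your step (1) is simply not available. The paper instead re-runs the entire variational argument on the full restricted complex $\Omega_S$: the unknown is enlarged to a pair $(s,\beta)\in\Gamma(E)\oplus\Omega^1(S)=\Omega^1_S$, the element to be killed is $c(\theta,s,\beta)=(\Gamma_\theta|_s,\theta^{\text{v}}|_s,\Ff_\theta|_s-\omega_S+\d\beta)$, whose vanishing encodes simultaneously ``$\Graph(s)$ is a leaf'' and ``$\omega_{\Graph(s)}$ is isotopic to $\omega_S$'' (this is where Moser enters, via Corollary \ref{cor:leaves}(ii)), and the relevant Laplacian on $\Omega^{2,(k-1)}_S$ is invertible using only $H^2_{\pi,S}(M)=0$ (Proposition \ref{prop:elliptic}). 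The identification of the parameter space with $\im\bigl(H^1_{\pi,S}(M)\to H^1_\pi(M,S)\bigr)$ then comes from analyzing when two pairs $(s,\beta)$, $(s',\beta')$ give the same leaf, not from a connecting-homomorphism correction of a family already produced.

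There is also a concrete error in your handling of the ``subtlety'': the claim that $[\omega_{S_\theta}]$ must equal $[\omega_S]$ by continuity because $H^2(S;\Rr)$ is ``discrete-in-families'' is false --- $H^2(S;\Rr)$ is a real vector space and the class of the induced form varies continuously, with no reason to be locally constant. Example \ref{ex-stable} is exactly a counterexample: arbitrarily small perturbations of $\pi$ produce nearby leaves diffeomorphic to $S$ whose symplectic volume, hence cohomology class, differs from that of $\omega_S$, so that \emph{no} nearby leaf is symplectomorphic to $S$. Preventing this drift of the symplectic class is the entire content of strong stability; it is achieved in the paper by the extra unknown $\beta$ together with the hypothesis $H^2_{\pi,S}(M)=0$, and cannot be recovered after the fact by a continuity argument.
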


The parameter space of strongly stable leaves (Theorem \ref{theorem22}) is a subspace of the parameter space for stable leaves (Theorem \ref{theorem11}), as it should be. It also has a nice geometric interpretation: it coincides with the space of leaves of the first jet approximation $j^1_S\pi$ which project diffeomorphically to $S$ and which have symplectic form isotopic to $\omega_S$ (see Corollary \ref{cor:leaves} (ii))

One obvious question that one could ask in the Poisson setting is wether the conditions in Theorems \ref{theorem11} and \ref{theorem22} are in any sense necessary for stability to hold. It turns out that there is a natural class of Poisson structures, namely those behaving linearly in a neighborhood of the leaf $S$, where these conditions appear naturally. More precisely, let $\pi$ be a Poisson structure on a vector bundle $p:E\to S$, such that the zero section is a leaf of $\pi$. We will say that $\pi$ is of \textbf{first order around $S$} if $\pi$ is Poisson diffeomorphic to $j^1_S\pi$ in some neighborhood of $S$. Then we have:

\begin{thm}
\label{thm:necessity:Poisson}
Let $\pi$ be a Poisson structure which is of first order around a compact symplectic leaf $S$. Then:
\begin{enumerate}[(i)]
\item If $S$ is stable, then the map $\Phi:H^2_{\pi,S}(M)\to H^2_\pi(M,S)$ vanishes.
\item If $S$ is strongly stable, then $H^2_{\pi,S}(M)=0$.
\end{enumerate} 
\end{thm}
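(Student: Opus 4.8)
The plan is to prove the contrapositives, exploiting that a Poisson structure of first order around $S$ is, by hypothesis, Poisson-diffeomorphic to its own first jet approximation $\jet^1_S\pi$ in a neighborhood of $S$. This means the local model is completely explicit: a symplectic form on $S$ together with a linear Poisson structure on the fibers of $\nu(S)$, and I would first set up coordinates in which the symplectic leaves of $\jet^1_S\pi$ near $S$ can be enumerated directly. The geometric interpretation already quoted from Corollary \ref{cor:leaves} is the key input here: the leaves of $\jet^1_S\pi$ which project diffeomorphically to $S$ are parametrized by $H^1_\pi(M,S) = \Gamma_{\text{flat}}(\nu^0(S))$, and those which in addition carry a symplectic form isotopic to $\omega_S$ are parametrized by $\im \Phi$ inside $H^1_\pi(M,S)$.

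The strategy for part (ii) is as follows. Suppose $S$ is strongly stable and suppose, for contradiction, that $H^2_{\pi,S}(M)\neq 0$. I would produce an explicit deformation $\pi_t$ of $\pi$ (within the first-order model, or at least $C^\kappa$-small) whose cohomology class in $H^2_{\pi,S}(M)$ is a chosen nonzero class, and show that for this deformation none of the nearby leaves of $\pi_t$ can be symplectomorphic to $(S,\omega_S)$ — intuitively, a nonzero class in $H^2_{\pi,S}(M)$ records an obstruction to keeping the restricted symplectic form in the right isotopy class, and since the first-order model exhausts all nearby leaves, strong stability fails. Concretely, I expect $H^2_{\pi,S}(M)$ to inject into $H^2_{\mathrm{dR}}(S)$ (or map to it via the restriction of the leafwise symplectic data), so a nonzero restricted class deforms $[\omega_S]$ to a genuinely different de Rham class on any candidate nearby leaf, contradicting symplectomorphism with $S$. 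For part (i), the same line of argument applies but tracking only the image of $\Phi$ in $H^2_\pi(M,S)$: if $\Phi$ is nonzero one chooses a deformation hitting a class not in $\ker\Phi$, and the corresponding leaf cannot even be diffeomorphic to $S$ inside the allowed neighborhood — here one uses that $H^2_\pi(M,S)$ governs the existence of nearby leaves projecting diffeomorphically, so a nontrivial $\Phi$-image is exactly the obstruction to stability.

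More precisely, I would organize the argument around the long exact sequence
\[
\cdots \rmap H^\bullet_{\mathrm{dR}}(S) \rmap H^\bullet_{\pi,S}(M) \xrightarrow{\ \Phi\ } H^\bullet_\pi(M,S) \rmap H^{\bullet+1}_{\mathrm{dR}}(S) \rmap \cdots
\]
coming from the defining short exact sequence $0\to \Omega^\bullet(S)\to \X^\bullet_S(M)\to \X^\bullet(M,S)\to 0$. In degree $2$ this says $\ker(\Phi:H^2_{\pi,S}(M)\to H^2_\pi(M,S))$ is the image of $H^2_{\mathrm{dR}}(S)$, which is precisely the part of the restricted cohomology that corresponds to deformations of the leafwise symplectic class. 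The crucial point is then a \emph{realization} statement: for a first-order Poisson structure, every class in $H^2_{\pi,S}(M)$ (resp.\ its image under $\Phi$) is the infinitesimal variation of an honest $C^\kappa$-small deformation $\pi_t$ of $\pi$, and conversely the leaves near $S$ of any such deformation inside a fixed tubular neighborhood are controlled, to first order, by this class. Combined with the classifications from Corollary \ref{cor:leaves}, a nonvanishing obstruction class then genuinely prevents the existence of a nearby diffeomorphic (resp.\ symplectomorphic) leaf.

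The main obstacle I anticipate is precisely this realization/rigidity step: one must show not merely that nonzero cohomology classes \emph{can} be realized by small deformations — that direction is a standard construction with bivectors of the form $\pi + t\sigma$ for $\sigma$ a cocycle representative, checking $[\pi+t\sigma,\pi+t\sigma]=O(t^2)$ and then correcting — but that the obstruction is \emph{not} killed by higher-order corrections or by moving to a different nearby leaf, i.e.\ that the first-order model really is the whole story near $S$. This is where the hypothesis ``first order around $S$'' does all the work: it collapses the infinite-dimensional space of potential nearby leaves to the finite-dimensional explicit family of Corollary \ref{cor:leaves}, so that ``nearby leaf symplectomorphic to $S$'' becomes a condition one can check class-by-class. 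Making the dependence of the leafwise symplectic form on the deformation parameter precise enough to conclude non-symplectomorphism (for (ii)) and the dependence of the leaf's diffeomorphism type / projectability on the $\Phi$-image (for (i)) is the technical heart, and I would expect it to occupy the bulk of the proof.
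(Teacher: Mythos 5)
Your overall plan (deform $\pi$ so as to realize a given restricted cohomology class, then use the leaf-as-graph criteria to extract an obstruction) is in the same spirit as the paper, but the step you yourself flag as the technical heart is exactly the one your proposal does not supply, and two of your intermediate claims are wrong as stated. First, the realization step: a cocycle $c\in\X^2_S(M)\cong\Omega^{2}_{S}$ is defined only along $S$, and the paper's trick is to extend it by regarding its three components $c=c_{2,0}+c_{1,1}+c_{0,2}$ as fiberwise-constant elements of $\Omega_E$ and adding $t\,c$ componentwise to the Vorobjev triple $(\pi^{\text{v}},\Gamma_\pi,\omega_S+\sigma)$ of $\pi$. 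Because the bracket $[\cdot,\cdot]$ vanishes identically on $\Omega_S$ and the triple of a first-order structure is linear/affine, one gets $[\pi_t,\pi_t]=0$ \emph{exactly}, for all $t$, if and only if $\d_{\pi,0}c=0$: no higher-order corrections are needed, so the worry that the obstruction could be ``killed by corrections'' never arises. Your proposed representative $\pi+t\sigma$ with subsequent corrections leaves precisely this point open, and it is the only place where the first-order hypothesis is actually used.

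Second, two steps of your argument would fail. (a) The mechanism you propose for (ii) — an injection $H^2_{\pi,S}(M)\hookrightarrow H^2_{\mathrm{dR}}(S)$, so that a nonzero class forces a change of the de Rham class of the leafwise symplectic form — does not exist: the long exact sequence you quote maps $H^2_{\mathrm{dR}}(S)$ \emph{into} $H^2_{\pi,S}(M)$, not the other way (e.g.\ for $M=S\times\Rr$ one has $H^2_{\pi,S}(M)\cong H^1(S)\oplus H^2(S)$), and a class with nonzero $c_{2,0}$ or $c_{1,1}$ component obstructs through the non-existence of a leaf projecting diffeomorphically, not through $[\omega_{S'}]$. (b) Corollary \ref{cor:leaves} classifies leaves of $\jet^1_S\pi=\pi$ itself, not of the deformed structures $\pi_t$, so it cannot be invoked to say the first-order model ``exhausts all nearby leaves'' of $\pi_t$. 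The paper instead argues \emph{directly} rather than by contrapositive: stability (resp.\ strong stability) hands you, for small $t$, a leaf of $\pi_t$ near $S$ which is the graph of a section $s$; Proposition \ref{criteria-leaf} (plus, for (ii), the isotopy condition $\Ff_{\pi_t}|_s-\omega_S\in\d\Omega^1(S)$) turns the leaf conditions $(\pi_t)^{\text{v}}|_s=0$, $\Gamma_{\pi_t}|_s=0$ into $\overline{\d}_{\pi,0}(s/t)=-\phi(c)$, resp.\ $\d_{\pi,0}\bigl((s-\beta)/t\bigr)=-c$, i.e.\ the class is exact. This removes any need to prove non-existence of leaves or to carry out the ``rigidity'' analysis you anticipate.
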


Notice that this result provides some evidence to the conjecture stated in the Introduction.

\subsection{Lie algebroids} We now turn to the discussion of the complete version of Theorem \ref{theorem3}. 

Recall first a \textbf{Lie algebroid} over a manifold $M$ consists of a vector bundle $A$ over $M$ together with a Lie algebra bracket $[\cdot , \cdot]$ on the space $\Gamma(A)$ of sections of $A$ and a bundle map $\rho: A\to TM$ (called the \textbf{anchor}) satisfying the Leibniz identity
\[ 
[\alpha, f\beta]= f[\alpha, \beta]+ \Lie_{\rho(\alpha)}(f)\beta,\quad (\alpha,\beta\in\Gamma(A), f\in C^{\infty}(M)).
\]
Here, we denote by $\Lie_X$ the Lie derivative along the vector field $X\in \X(M)$. 

\begin{ex} The main examples for this paper are:
\begin{description}
\item[{\rm \emph{Foliations}}] Any foliation on $M$ can be viewed as a Lie algebroid with injective anchor map: $A$ is the bundle of vectors tangent to the leaves and the anchor is the inclusion. 
\item[{\rm \emph{Infinitesimal actions}}] To any infinitesimal action $\rho: \mathfrak{g}\to \X(M)$ of a finite dimensional Lie algebra $\mathfrak{g}$ on a manifold $M$ one associates a Lie algebroid: $A$ is the trivial bundle over $M$ with fiber $\mathfrak{g}$, the anchor is $\rho$ and the bracket is uniquely determined by the condition that it restricts on constant sections to the Lie bracket of $\mathfrak{g}$. 
\item[{\rm \emph{Poisson manifolds}}] Associated to any Poisson structure $\pi$ on $M$, there is a Lie algebroid structure on the cotangent bundle $T^*M$: the anchor is just $\pi^{\sharp}:T^*M\to TM$, while the bracket $[\cdot, \cdot]_{\pi}$ on $\Gamma(T^*M)= \Omega^1(M)$ is uniquely determined by
\[ [\d f, \d g]_{\pi}= \d\{f, g\}.\]
\end{description}
\end{ex}

Generalizing the leaves of foliated manifolds, orbits of actions and the symplectic leaves of Poisson manifolds, one can talk about the \textbf{leaves of a Lie algebroid} $A$ over $M$: they are regular immersed submanifolds $L$ of $M$ which are the maximal integrals of the singular distribution $\rho(A)\subset TM$:
\[ T_xL= \rho(A_x) \quad \forall \ x\in L .\]
Two points $x,y\in M$ are in the same leaf if and only if they can be joined by an \textbf{$A$-path}, i.e., there exists path $a:I\to A$, covering a $C^1$ base path 
$\gamma:I\to M$, starting at $x$ and ending at $y$ (meaning $\gamma(0)=x$ and $\gamma(1)=y$), such that:
\[ \frac{\d \gamma}{\d t}(t)=\rho(a(t)), \quad \forall t\in I.\]
Given a leaf $L\subset M$ of $A$, one can restrict $A$ to $L$ to obtain a new algebroid $A|_{L}$ over $L$, with the bracket uniquely determined by: 
\[ [\alpha|_{L}, \beta|_{L}]= [\alpha, \beta]|_{L}.\]
This restricted algebroid $A|_L$ has the important property that it is transitive (i.e., its
anchor is surjective). Also, the bracket on $\Gamma(A)$ restricts to a pointwise
bracket on the kernel of the anchor map at each point $x\in M$, to induce a Lie algebra
\[ \mathfrak{g}_x(A):= \textrm{Ker}(\rho_x)\subset A_x,\]
called the isotropy Lie algebra at $x$. When $x$ runs inside a leaf $L$, they fit
together into a smooth Lie algebra bundle $\mathfrak{g}_{L}(A)$ over $L$, which can be viewed as a Lie algebroid with zero anchor. One obtains a short exact sequence of Lie algebroids:
\[ \xymatrix{0\ar[r] & \mathfrak{g}_{L}(A)\ar[r] & A|_L\ar[r] & TL\ar[r] & 0}.\]
If we fix a (vector bundle) splitting of the sequence, $\sigma:TL\to A|_L$, we can define a connection $\nabla^\sigma$ on $\mathfrak{g}_{L}(A)$ by:
\[ \nabla^\sigma_X\xi=[\sigma(X),\xi], \quad (X\in\X(S),\xi\in\Gamma(\mathfrak{g}_{L}(A))).\]
The connections induced on the center $Z(\mathfrak{g}_{L})(A)$ and also on the abelianization $\mathfrak{g}_{L}(A)/[\mathfrak{g}_{L}(A),\mathfrak{g}_{L}(A)]$ are flat and do not depend on the choice of splitting. Hence, these two bundles are canonically flat bundles over $L$.

\begin{ex}
\label{ex:short:seq:Poisson}
When $A=T^*M$ is the Lie algebroid associated to a Poisson manifold $(M,\pi)$ and we fix a symplectic leaf $S$, the sequence above amounts to:
\[ \xymatrix{0\ar[r] & \nu^*(S)\ar[r] & T_S^*M\ar[r] & TS\ar[r] & 0} \]
We conclude that the bundle $\nu_S^0$ consisting of zeros of the transverse linear Poisson structure is canonically a flat vector bundle over $S$. The sections of this bundle form the parameter space in Theorem \ref{theorem11}.
\end{ex}

For any Lie algebroid $A$ one has an associated de Rham complex: $\Omega^\bullet(A)= \Gamma(\wedge^\bullet A)$ with the differential $\d_A$ given by the classical Koszul formula
\begin{align*} 
\d_A\omega(\alpha_1, \ldots , \alpha_{q+1})=&\sum_{i}(-1)^{i+1} \Lie_{\rho(\alpha_i)}(\omega(\alpha_1, \ldots , \widehat{\alpha_i}, \ldots , \alpha_{q+1}))+\\
& + \sum_{i< j} (-1)^{i+j}\omega([\alpha_i, \alpha_j], \ldots , \widehat{\alpha_i}, \ldots, \widehat{\alpha_j}, \ldots , \alpha_{q+1}).
\end{align*}
The resulting cohomology $H^\bullet(A)$ is called the \textbf{$A$-de Rham cohomology}. 

\begin{ex} The $A$-de Rham cohomology generalizes the usual de Rham cohomology (when $A= TM$),
foliated cohomology (when $A$ comes from a foliation), Lie algebra cohomology (when $M$ is
a point and $A$ is a Lie algebra). When $(M,\pi)$ is a Poisson manifold and
$A=T^*M$ is the associated Lie algebroid, then $H^{\bullet}(A)$ coincides with the Poisson cohomology $H^{\bullet}_{\pi}(M)$. For any symplectic leaf $S$, the $A$-de Rham cohomology $H^{\bullet}(A|_S)$ of the restricted Lie algebroid $A|_S=T_S^* M$ coincides with $H^{\bullet}_{\pi,S}(M)$, the Poisson cohomology restricted to $S$. 
\end{ex}

There is a simple variation of $A$-de Rham cohomology obtained by allowing coefficients. Here by coefficients we mean representations of $A$, i.e., vector bundles $E$ over $M$ endowed with $A$-derivatives operators:
\[ \Gamma(A)\otimes \Gamma(E)\rmap \Gamma(E),\ \ (\alpha, e)\mapsto \nabla_{\alpha}(e), \]
which satisfy the connection-like identities
\[  \nabla_{f\alpha}(e)= f\nabla_{\alpha}(e),\quad \nabla_{\alpha}(fe)= f\nabla_{\alpha}(e)+ \Lie_{\rho(\alpha)}(f) e,\quad (f\in C^\infty(M))\]
as well as the flatness condition
\[ \nabla_{[\alpha, \beta]}= \nabla_{\alpha}\nabla_{\beta}- \nabla_{\beta}\nabla_{\alpha}.\]
One defines the complex $\Omega^\bullet(A;E)=\Gamma(\wedge^\bullet A^*\otimes E)$, with the differential $\d_A$ given by a formula similar to the one above, but where we 
replace $\Lie_{\rho(\alpha_i)}$ (which does not longer make sense) by $\nabla_{\alpha_i}$.

\begin{ex}
If we fix a leaf $L\subset M$ of $A$, the restricted algebroid $A|_L$ has a canonical representation on the normal bundle $\nu_{L}= T_{L}M/TL$:
\[ \nabla_{\alpha}(X| \mod TL)= [\rho(\alpha), X]  \mod TL.\]
This representation is known as the \textbf{Bott representation} of $A|_L$ and
the resulting cohomology $H^*(A|_{L};\nu_{L})$ is the one relevant to the stability 
of leaves of Lie algebroids. Note that in degree zero one has
\[ H^0(A|_{L};\nu_{L})= \Gamma_{\text{flat}}(\nu_L),\]
the space of flat sections (with respect to the Bott connection).
\end{ex}

There is one final ingredient in the statement of our stability theorem for Lie algebroids that we need to introduce, namely the topology on the set of Lie algebroid structures on a fixed vector bundle. First, note that locally (over $M$) we can describe a Lie algebroid $A\to M$ by certain \textbf{structure functions} as follows: fix local coordinates $(U,x^1,\dots,x^n)$ on $M$ and a basis of sections $\{e_1,\dots,e_r\}$ of $A|_U$. Then, 
there are functions $a^\al_i,c_{ij}^k\in C^\infty(U)$ such that:
\[ \rho(e_i)=a_i^\al\frac{\partial}{\partial x^\al},\quad [e_i,e_j]=c_{ij}^k e_k.\]
This makes it possible to compare any two Lie algebroid structures over $U$ by comparing the corresponding structure functions (in whatever $C^{\kappa}$-topology one may wish). Now, if one wants to do this globally over $M$, one can choose a connection $\nabla$ on the vector bundle and proceed as follows: to each Lie algebroid structure, with anchor $\rho$ and Lie bracket $[~,~]$, we associate sections $a\in \Gamma(A^*\otimes TM)$ and 
$c\in\Gamma(\wedge^2A^*\otimes A)$, by setting:
\[ 
\langle a,(\alpha,\omega)\rangle=\langle\rho(\al),\omega\rangle,\quad 
\langle c,(\alpha,\beta,\xi)\rangle=\langle[\al,\be]-\nabla_{\rho(\al)}\be,\xi\rangle.
\]
The $C^{\kappa}$-topology on sections of a vector bundle now induce a \textbf{$C^\kappa$-topology on the space of Lie algebroid structures} on $A$.

\begin{thm}
\label{theorem33} 
Let $L$ be a $n$-dimensional compact leaf a Lie algebroid $A$ which satisfies $H^1(A|_{L};\nu_{L})= 0$, and let $\kappa>\frac{n}{2}$ be an integer. For any neighborhood $V$ of $L$ there exists a neighborhood $\mathcal{V}$ of the Lie algebroid $A$ in the $C^\kappa$-topology such that any Lie algebroid structure in $\mathcal{V}$ has a family of leaves in $V$, diffeomorphic to $L$, smoothly parametrized by $H^0(A|_{L};\nu_{L})$ and depending continuously on the algebroid structure. 
\end{thm}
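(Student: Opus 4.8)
The plan is to realize nearby leaves as graphs of sections of the normal bundle $\nu_L$ and to detect them as zeros (in fact, absolute minima) of a suitable functional built from the nearby algebroid structure, following the strategy outlined in Section~\ref{sec:outline}. First I would set up a tubular neighborhood $V\cong \nu_L$ of $L$, so that a candidate nearby leaf sitting in $V$ and projecting diffeomorphically onto $L$ is the image of a section $s\in\Gamma(\nu_L)$. The defining condition for the image of $s$ to be a leaf of a given Lie algebroid structure $A'$ is that the distribution $\rho'(A')$ is tangent to $\Graph(s)$ along $\Graph(s)$; linearizing this condition at $s=0$ for the original algebroid $A$ produces exactly the Bott connection and the complex $(\Omega^\bullet(A|_L;\nu_L),\d_{A|_L})$. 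The hypothesis $H^1(A|_L;\nu_L)=0$ is then the infinitesimal statement that the ``leaf equation'' has no obstruction to being solved modulo the symmetries coming from $H^0(A|_L;\nu_L)$, which is compact (it is a finite-dimensional space, since the complex is elliptic and $L$ is compact) and which will serve as the parameter space.

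Next I would make this rigorous via an implicit-function / elliptic-operator argument, but carried out so as to land in a geometric minimization problem rather than a Newton iteration. Concretely, choose an auxiliary metric and form, for each $A'$ close to $A$, a functional $\mathcal{F}_{A'}$ on a Sobolev completion $\Gamma^{k}(\nu_L)$ (with $k$ chosen so that $\kappa>\frac n2$ gives the needed Sobolev embeddings — this is where the numerical hypothesis on $\kappa$ enters) whose value at $s$ measures the failure of $\Graph(s)$ to be $\rho'(A')$-invariant, in an $L^2$-type norm. One shows: (a) $\mathcal{F}_{A}$ attains its minimum value $0$ precisely on the leaves of the first-order data along $L$, which by the degree-zero computation is identified with $H^0(A|_L;\nu_L)$; (b) $\mathcal{F}_{A'}$ depends continuously on $A'$ in the $C^\kappa$-topology; (c) the Hessian of $\mathcal{F}_A$ transverse to this zero set is nondegenerate — this is exactly the vanishing $H^1(A|_L;\nu_L)=0$, together with ellipticity, giving a spectral gap. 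Then a standard perturbation argument for nondegenerate minima of a family of functionals (Morse–Bott stability) yields, for each $A'$ in a $C^\kappa$-neighborhood $\mathcal{V}$, a nearby manifold of absolute minima of $\mathcal{F}_{A'}$, diffeomorphic to (an open subset of) $H^0(A|_L;\nu_L)$, depending continuously on $A'$; each such minimum is a genuine $A'$-leaf contained in $V$ and diffeomorphic to $L$. Elliptic regularity upgrades the Sobolev sections back to smooth ones.

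The main obstacle, as the authors themselves stress, is point (a)–(c): proving that the zero set of $\mathcal{F}_A$ is \emph{exactly} a smooth manifold modeled on $H^0$, that the minima are genuinely absolute (not just critical), and that the transverse Hessian is nondegenerate and varies continuously — i.e. controlling the full nonlinear ``leaf equation'' and not merely its linearization. The subtlety is that leaves of $A$ near $L$ can be singular and that nearby algebroid structures need not be integrable, so one cannot integrate to groupoids and argue with holonomy as in the classical Reeb–Thurston or Hirsch–Stowe proofs; instead the entire argument must stay at the level of the algebroid and the functional. I expect the technical heart to be the construction of $\mathcal{F}_{A'}$ so that its zero locus genuinely equals the set of nearby $A'$-leaves (and not some larger variety), together with the a priori bound showing these minima cannot escape the tubular neighborhood $V$ — this last point is where compactness of $L$ is used a second time.
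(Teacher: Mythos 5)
Your overall strategy is the paper's: pass to a tubular neighborhood, encode candidate leaves as graphs of sections of $\nu_L$, measure the failure of $\Graph(s)$ to be a leaf by a squared-norm functional on a Sobolev completion, use ellipticity and $H^1(A|_L;\nu_L)=0$ to make the Hessian transversally nondegenerate, perturb, and recover smoothness by elliptic regularity, with $H^0(A|_L;\nu_L)=\ker$ of the linearized operator as parameter space. But there is a genuine gap at the decisive step. Morse--Bott persistence of a nondegenerate critical manifold gives you, for each nearby structure $A'$, a nearby family of \emph{critical points} (indeed local minima) of $\mathcal{F}_{A'}$ --- it does not give you that the value of $\mathcal{F}_{A'}$ at these points is zero. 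Since $\mathcal{F}_{A'}$ is a squared norm of the leaf-defect, a strictly positive minimum means precisely that there is \emph{no} leaf among nearby graphs; continuity in $A'$ only makes the minimum value small, not zero. So the sentence ``each such minimum is a genuine $A'$-leaf'' is exactly the content of the theorem, and you flag it as the main obstacle without supplying the mechanism that closes it.

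The paper closes it as follows, and this is the one idea your proposal is missing. Write the defect as $c(\ell,s)=\ell|_{s}$ in the deformation complex, restrict the functional to the affine slices $s+W$ with $W=(\ker\d_{\ell_0,0})^{\perp}$, and let $\tilde{s}$ be the critical point produced by the implicit function theorem. The critical-point equation only says $\d^{*}_{\ell,\tilde{s}}\,c(\ell,\tilde{s})\perp W$, i.e. $\d_{\ell_0,0}\d^{*}_{\ell,\tilde{s}}\,c(\ell,\tilde{s})=0$. The second, crucial, input is the universal identity $\d_{\ell,\tilde{s}}\bigl(\ell|_{\tilde{s}}\bigr)=0$, obtained by linearizing the structure equation $[\ell,\ell]=0$ along $\tilde{s}$ --- it holds for \emph{every} algebroid structure $\ell$ and every section, not just for the unperturbed one. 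Together these place $c(\ell,\tilde{s})$ in the kernel of the mixed Laplacian $\d_{\ell_0,0}\d^{*}_{\ell,\tilde{s}}+\d^{*}_{\ell_0,0}\d_{\ell,\tilde{s}}$, which is invertible at $(\ell_0,0)$ by ellipticity of $\Omega^{\bullet}(A_S;\nu_S)$ and $H^1=0$, hence invertible for $\ell$ close to $\ell_0$ and $\tilde{s}$ small; therefore $c(\ell,\tilde{s})=0$ and $\Graph(\tilde{s})$ really is a leaf. Two smaller calibration points: the functional must use the Sobolev $H^{(\kappa-1)}$-norm of the defect (not a plain $L^2$ norm) so that the Hessian $\langle\d_{\ell_0,0}w,\d_{\ell_0,0}w\rangle_{(\kappa-1)}$ is \emph{strongly} nondegenerate on $W\subset\Gamma^{(\kappa)}$ via the closed-range property; and no separate a priori bound is needed to keep the solutions inside $V$, since the implicit function theorem already produces them small in $\Gamma^{(\kappa)}$, hence $C^0$-small.
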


Similar to the Poisson case, the parameter space can also be characterized as the space of leaves of the \emph{first jet approximation} to the Lie algebroid $A$ along $L$. Moreover, for Lie algebroid structures of \emph{first order type} around $L$ the condition in the theorem is also a necessary condition for stability.

\subsection{From Lie algebr(oids) to Lie group(oids)}
\label{subsec:algbrd:grp:cohom}
Let us explain how the Lie algebroid cohomology group appearing in the theorem can be re-interpreted as differentiable cohomology of a certain Lie group. This will show, for example, that the classical stability theorem for foliations follows from our general theorem for Lie algebroids.

The relevant group is the \textbf{first homotopy group of $A$} based at $x$ (or the \textbf{fundamental group of $A$} based at $x$) which will be denoted $G_x(A)$. The elements of $G_x(A)$ are the equivalence classes of $A$-paths which start and end at $x$, i.e., the $A$-loops based $x$, modulo the equivalence relation of $A$-homotopy. The space $P(A,x)$ formed by all $A$-loops based at $x$ is an infinite dimensional Banach manifold and $G_x(A)$, endowed with the quotient topology, becomes a topological group ((see Sections 1.1 and 1.3 in \cite{CrFe1}). By the smooth structure on $G_x(A)$ we will allways mean a smooth structure which makes the quotient map $P(A,x)\to G_x(A)$ into a submersion. Such a smooth structure, if it exists, is unique and makes $G_x(A)$ into a Lie group integrating $\mathfrak{g}_x(A)$. In general, $G_x(A)$ is neither connected nor simply-connected.

Geometrically, $A$-paths can be used to perform parallel transport with respect to $A$-connections. The resulting maps are invariant under $A$-homotopy provided the $A$-connection is flat (i.e., if it defines a representation of $A$) \cite[Proposition 1.6]{CrFe1}. This applied to the Bott representation of $A|_L$ gives a representation of the fundamental group of $A$:
\[ \textrm{hol}: G_x(A)\rmap GL(\nu_x),\]
called the \textbf{linear holonomy representation} based at $x$.



\begin{prop}
\label{prop-isotropy}
Let $A$ be a Lie algebroid, $L$ a leaf of $A$ and $x\in L$. If $G_x(A)$ is smooth then 
\[ H^1(A;\nu_L)\cong H^1(G_x(A); \nu_x)\]
where the right hand side is the differentiable cohomology of the Lie group $G_x(A)$ with coefficients in the
linear holonomy representation.
\end{prop}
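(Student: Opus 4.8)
The plan is to identify both cohomologies as the cohomology of a single double complex and then collapse the two spectral sequences in the degree range we care about. The key observation is that, since $A|_L$ is transitive, there is a short exact sequence of Lie algebroids
\[
\xymatrix{0\ar[r] & \mathfrak{g}_L(A)\ar[r] & A|_L\ar[r]^{\rho} & TL\ar[r] & 0},
\]
which gives a filtration of the complex $\Omega^\bullet(A|_L;\nu_L)$ by "number of $\mathfrak{g}_L(A)$-arguments", hence a spectral sequence whose $E_2$-page is the foliated (here just the ordinary) cohomology of $L$ with coefficients in the cohomology of the isotropy Lie algebras, $H^p(L; \mathcal{H}^q(\mathfrak{g}_L(A);\nu_L))$. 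In low degrees this reads: $E_2^{0,0}=H^0(L;(\nu_L)^{\mathfrak{g}_L})$, $E_2^{1,0}=H^1(L;(\nu_L)^{\mathfrak{g}_L})$, and $E_2^{0,1}=H^0(L;\mathcal{H}^1(\mathfrak{g}_L(A);\nu_L))$. The only differential that can affect total degree $\le 1$ is $d_2\colon E_2^{0,1}\to E_2^{2,0}$, which does not enter, so we get a short exact sequence
\[
0\rmap H^1(L;(\nu_L)^{\mathfrak{g}_L}) \rmap H^1(A|_L;\nu_L) \rmap H^0(L;\mathcal{H}^1(\mathfrak{g}_L(A);\nu_L))\rmap 0 .
\]

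On the Lie group side I would run the analogous argument for the fundamental group $G_x(A)$, using that it is an extension of the (discrete, since $L$ is the image of $\rho$) ``fundamental-group-of-the-leaf'' piece by the connected Lie group $G_x^0(A)$ integrating $\mathfrak{g}_x(A)$. More precisely, $G_x(A)$ fits in $1\to G_x^0(A)\to G_x(A)\to \pi_1(L,x)\to 1$ (this uses the long exact sequence of the fibration $P(A,x)\to G_x(A)$, cf. \cite{CrFe1}), and differentiable group cohomology has a Hochschild--Serre type spectral sequence for such extensions with $E_2^{p,q}=H^p(\pi_1(L,x); H^q_d(G_x^0(A);\nu_x))$. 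By van Est, $H^q_d(G_x^0(A);\nu_x)\cong H^q(\mathfrak{g}_x(A);\nu_x)$ in the relevant range (degrees $0$ and $1$, where van Est is an isomorphism with no connectivity hypothesis needed beyond what we have). Since $H^p(\pi_1(L,x);-)$ computes $H^p(L;-)$ for the corresponding flat bundle (as $L$ is a $K(\pi_1,1)$ only if aspherical — so here I must instead observe that it is really group cohomology of $\pi_1(L,x)$ that appears, and that the bundle $(\nu_L)^{\mathfrak{g}_L}$ with its flat Bott connection corresponds exactly to the $\pi_1(L,x)$-module $(\nu_x)^{\mathfrak{g}_x}$, so that $H^1(\pi_1(L,x);(\nu_x)^{\mathfrak{g}_x})$ is a quotient of $H^1(L;(\nu_L)^{\mathfrak{g}_L})$, with equality when one is careful about whether one wants sheaf/de Rham cohomology of $L$ or group cohomology of $\pi_1$). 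To keep the two sides genuinely isomorphic I would phrase the leaf-side spectral sequence from the start in terms of $\pi_1(L,x)$-cohomology via the holonomy, i.e. replace $H^\bullet(L;-)$ by $H^\bullet(\pi_1(L,x);-)$ throughout — this is legitimate in degrees $\le 1$ since $H^1(\pi_1;V)\hookrightarrow H^1(L;V)$ and in fact the subtlety only lives in degree $\ge 2$.

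Comparing the two short exact sequences term by term, the outer terms match: $H^1(\pi_1(L,x);(\nu_x)^{\mathfrak{g}_x})$ on both sides, and $H^0(\pi_1(L,x);H^1(\mathfrak{g}_x(A);\nu_x)) = H^1(\mathfrak{g}_x(A);\nu_x)^{\pi_1(L,x)}$ on both sides (the invariants of the isotropy Lie algebra cohomology under holonomy, equal to the flat sections of $\mathcal{H}^1(\mathfrak{g}_L(A);\nu_L)$ over $L$). A diagram chase, using naturality of both spectral sequences with respect to the van Est map $\Omega^\bullet(A|_L;\nu_L)\to C^\bullet_d(G_x(A);\nu_x)$, then forces the middle arrow $H^1(A|_L;\nu_L)\to H^1(G_x(A);\nu_x)$ to be an isomorphism.

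The main obstacle I anticipate is not the spectral-sequence formalism but making the van Est comparison map honest at the level of the filtered complexes: one needs a van Est map that is defined for the restricted algebroid $A|_L$ (a transitive algebroid) and its fundamental group $G_x(A)$, that is filtration-preserving for the two filtrations described above, and that induces on $E_1$ (hence $E_2$) the classical van Est isomorphism for the connected group $G_x^0(A)$ in degrees $0$ and $1$. Verifying compatibility of the filtrations is the delicate bookkeeping step; once that is in place, the rest is a standard low-degree five-lemma argument. An alternative, possibly cleaner, route that avoids $E_2$-page identifications: prove directly that $H^0(A|_L;\nu_L)\cong H^0(G_x(A);\nu_x)$ (both are the flat sections, essentially by definition of the holonomy representation), and then use dimension-shifting / the fact that in degree $1$ cohomology classifies extensions — a $1$-cocycle on $A|_L$ with values in $\nu_L$ is a ``derivation'', which integrates uniquely over the source-simply-connected cover and then descends to $G_x(A)$ precisely when the class is $G_x(A)$-invariant, giving the isomorphism in degree $1$ by hand. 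I would attempt the spectral sequence version first since it is more robust, falling back on the hands-on degree-$1$ argument if the filtration compatibility becomes unwieldy.
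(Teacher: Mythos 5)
There is a genuine gap, and it sits exactly where the hard content of the proposition lives. Your comparison of the two spectral sequences requires the $E_2^{0,1}$-terms to match, and for that you invoke ``van Est in degrees $0$ and $1$, where van Est is an isomorphism with no connectivity hypothesis needed''. That is false in degree $1$: for a connected but non-simply-connected Lie group the van Est map $H^1_d(G;V)\to H^1(\mathfrak{g};V)$ is only injective (e.g.\ $G=\Ss^1$, $V=\Rr$ trivial: $H^1_d(\Ss^1;\Rr)=0$ while $H^1(\mathbb{R};\Rr)=\Rr$). In your setting the identity component $G^0_x(A)$ of $G_x(A)$ need not be simply connected -- the homotopy sequence of the fibration you quote shows $\pi_1(G^0_x(A))$ is a quotient of $\pi_2(L,x)$ -- so $H^0\bigl(\pi_1(L,x);H^1_d(G^0_x(A);\nu_x)\bigr)$ can be strictly smaller than $H^0\bigl(L;\mathcal{H}^1(\mathfrak{g}_L(A);\nu_L)\bigr)$, and your term-by-term matching breaks down. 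A second, related slip: the claim that $d_2\colon E_2^{0,1}\to E_2^{2,0}$ ``does not enter'' is wrong; the correct low-degree statement is the five-term exact sequence, in which the quotient of $H^1$ is only $\Ker d_2$. One can still try a four/five-lemma comparison of the full five-term sequences, but then surjectivity of the map on $H^1$ needs surjectivity on the $E_2^{0,1}$-terms, which is exactly what fails by the previous point. The two defects compensate each other in the truth of the statement (the $d_2$-obstruction on the algebroid side matches the $\pi_2(L)$-obstruction to integrating Lie algebra $1$-cocycles on the group side), but your argument never establishes this compensation, and it is precisely the nontrivial input. The hands-on fallback you sketch has the same issue: a $1$-cocycle integrates over the source-simply-connected object, which is the groupoid $\G_L$, not the universal cover of $G_x(A)$, and ``descends when the class is $G_x(A)$-invariant'' is not a proof of the needed descent.

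The paper's proof is designed to sidestep exactly this. It uses the hypothesis that $G_x(A)$ is smooth to conclude that $A_L=A|_L$ is integrable, passes to the Weinstein groupoid $\G_L$ (the $s$-simply-connected integration), applies the groupoid van Est theorem of \cite{Cr} -- where the relevant connectivity hypothesis is $1$-connectedness of the \emph{source fibers}, which holds by construction, rather than of the isotropy group, which may fail -- to get $H^1(\G_L;\nu_L)\cong H^1(A_L;\nu_L)$, and then uses that the transitive groupoid $\G_L$ is Morita equivalent to its isotropy group $G_x(A)$ together with Morita invariance of degree-one differentiable cohomology. The $\pi_2(L)$ information that obstructs your $E_2$-comparison is absorbed into the source fibers of $\G_L$; without some substitute for this step (Morita invariance, or a direct analysis of the $d_2$-differentials against the integration obstruction), your proposal does not close. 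As a minor point, your van Est comparison map is also written in the wrong direction: it goes from differentiable (group/groupoid) cochains to algebroid cochains, not the other way around.
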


\begin{proof}
First of all, the condition that $G_x(A)$ is smooth is equivalent to the integrability
of the Lie algebroid $A_L$ (\cite{CrFe1}). Let $\G_L=\G(A_L)$ be the fundamental groupoid of $A_L$, formed by $A$-homotopy classes of arbitrary $A$-paths (also known as the Weinstein groupoid of $A$). It is the unique (up to isomorphism) $s$-simply connected Lie groupoid
integrating $A_L$. 

Similarly to the discussion above, parallel transport with respect to the Bott connection makes $\nu_L$ into a representation of the groupoid $\G_L$. Since the $s$-fibers of $\G_L$ are 1-connected, a theorem of \cite{Cr} states that the Van Est map
\[ \textrm{VE}: H^1(\G_L;\nu_L)\rmap H^1(A_L;\nu_L),\]
is an isomorphism. Here, as in loc. cit., $H^{\bullet}(\G_L;\nu_L)$ is defined via the complex of smooth (groupoid) cocycles on $\G_L$ with coefficients in
$\nu_L$ (differentiable cohomology). 

Next, by its very construction, $G_x(A)$ coincides with the isotropy group of $\G_L$ (i.e., the Lie group of arrows of $\G_L$ starting and ending at $x$). Hence $\G_L$, being a transitive Lie groupoid, is Morita equivalent to $G_x(A)$. Under this equivalence, the representation of $\G_L$ on $\nu_L$ corresponds to the linear holonomy representation of $G_x(A)$ on $\nu_x$.

Finally, since the first differentiable groupoid cohomology is a Morita invariant (see, e.g., \cite{Cr}), we conclude that $H^1(\G_L; \nu_L)$ and $H^1(G_x(A);\nu_x)$ are isomorphic and the result follows.
\end{proof}

We can now look back at foliations, groups actions and Poisson manifolds to see what we can conclude from Theorem \ref{theorem33}.

\subsubsection{Foliations}
Combining Proposition \ref{prop-isotropy} with Theorem \ref{theorem33} we recover the classical stability theorem for foliations, as we now describe. 

Let $A=T\mathcal{F}$ be a foliation, viewed as an involutive subbundle of $TM$ and as an algebroid with the inclusion as anchor map. Then the Bott representation becomes the usual flat connection on $\nu_L$,  and we have
\[ H^1(A;\nu_L)= H^1(L;\nu_L) \text{ and } G_x(A)= \pi_1(L,x).\] 
Also, the topology on the space of Lie algebroid structures introduced above, induces the usual $C^{\kappa}$-topology on the space of foliations. Hence:

\begin{cor}
Let $L$ be a compact leaf of a foliation $\F$, let $\kappa>\frac{n}{2}+1$ be an integer, and assume that 
\[ H^1(\pi_1(L,x);\nu_x)=0.\] 
Then for any neighborhood $V$ of $L$ there exists a neighborhood $\mathcal{V}$ of the foliation $\F$ in the $C^{\kappa}$-topology such that any foliation in $\mathcal{V}$ has a family of leaves in $V$, diffeomorphic to $L$, smoothly parametrized by \emph{$\Gamma_{\text{flat}}(\nu_{L})$}.
\end{cor}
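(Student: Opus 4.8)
The plan is to obtain the corollary as a special case of Theorem \ref{theorem33}, after translating its two cohomological ingredients and its topology into the language of foliations. First I would set up the dictionary. View $A=T\F$ as a Lie algebroid with anchor the inclusion $T\F\hookrightarrow TM$; then the leaf of $A$ through $x$ is the leaf $L$ of $\F$, the restricted algebroid $A|_L$ is simply $TL$, and the Bott representation of $A|_L$ on $\nu_L$ is the usual flat connection. An $A$-path is then nothing but (the velocity of) a path inside a leaf, and $A$-homotopy of $A$-loops based at $x$ is ordinary homotopy within $L$; hence $G_x(A)=\pi_1(L,x)$, a discrete group, which is trivially a ($0$-dimensional) Lie group, in particular smooth. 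Proposition \ref{prop-isotropy} then yields $H^1(A|_L;\nu_L)\cong H^1(\pi_1(L,x);\nu_x)$, using that the differentiable cohomology of a discrete group is ordinary group cohomology, while $H^0(A|_L;\nu_L)=\Gamma_{\text{flat}}(\nu_L)$ is immediate from the definition of degree-zero cohomology. So the hypothesis of the corollary is exactly $H^1(A|_L;\nu_L)=0$, and the parameter space $H^0(A|_L;\nu_L)$ appearing in Theorem \ref{theorem33} is $\Gamma_{\text{flat}}(\nu_L)$.

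Next I would compare the $C^\kappa$-topology on foliations with the algebroid $C^\kappa$-topology on structures on the fixed bundle $T\F$. Given a foliation $\F'$ that is $C^\kappa$-close to $\F$, on a tubular neighborhood $V'\subseteq V$ of the compact leaf $L$ the distribution $T\F'$ is the graph of a $C^\kappa$-small bundle map $T\F\to\nu_L$; composing with a fixed complement gives a $C^\kappa$-small isomorphism $\psi: T\F|_{V'}\to T\F'|_{V'}$, and pushing the Lie bracket of $\F'$-tangent vector fields through $\psi$ produces a Lie algebroid structure on the fixed bundle $T\F|_{V'}$. Because the bracket term in the structure functions differentiates $\psi$ once, a $C^\kappa$-foliation yields only $C^{\kappa-1}$ structure functions, and $C^\kappa$-convergence of foliations gives $C^{\kappa-1}$-convergence of the associated algebroid structures; this is precisely why the corollary must assume $\kappa>\frac{n}{2}+1$, so that $\kappa-1>\frac{n}{2}$ matches the hypothesis of Theorem \ref{theorem33}. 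Conversely any Lie algebroid structure on $T\F|_{V'}$ sufficiently $C^{\kappa-1}$-close to that of $\F$ again has injective anchor, so its image is an involutive subbundle, i.e. a foliation, and nothing is lost in this translation. Since the $C^\kappa$-topology on structures is the compact-open one and the conclusion of Theorem \ref{theorem33} is local around $L$, it is legitimate to apply it to this structure even though it is defined only near $L$.

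Finally I would assemble the pieces. Applying Theorem \ref{theorem33} to $A=T\F$ with regularity $\kappa-1>\frac{n}{2}$ produces a neighborhood $\mathcal{V}_A$ of $A$ in the algebroid $C^{\kappa-1}$-topology such that every structure in $\mathcal{V}_A$ has a family of leaves in $V$, diffeomorphic to $L$, smoothly parametrized by $\Gamma_{\text{flat}}(\nu_L)$ and depending continuously on the structure. Taking $\mathcal{V}$ to be the set of foliations $\F'$ in the $C^\kappa$-topology whose transported algebroid structure on $T\F|_{V'}$ lies in $\mathcal{V}_A$ gives, by the previous paragraph, a genuine $C^\kappa$-neighborhood of $\F$; and for $\F'\in\mathcal{V}$ the leaves of that algebroid in $V$ are exactly the leaves of $\F'$ in $V$, which is the desired conclusion. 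I expect the only nontrivial work to be the second paragraph: carefully carrying out the graph identification over a neighborhood of the compact leaf, verifying that it is defined and $C^\kappa$-small for all nearby foliations, and keeping track of the single derivative lost in passing to structure functions. Everything else is a straightforward unwinding of definitions together with the already-established Proposition \ref{prop-isotropy} and Theorem \ref{theorem33}.
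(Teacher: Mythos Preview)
Your proposal is correct and follows exactly the paper's route: identify $G_x(T\F)=\pi_1(L,x)$, invoke Proposition \ref{prop-isotropy} to translate the cohomological hypothesis into $H^1(A|_L;\nu_L)=0$, note that $H^0(A|_L;\nu_L)=\Gamma_{\text{flat}}(\nu_L)$, and apply Theorem \ref{theorem33}. Your second paragraph, explaining the loss of one derivative when transporting a nearby foliation to a Lie algebroid structure on the fixed bundle $T\F$ (and hence the need for $\kappa>\frac{n}{2}+1$ rather than $\frac{n}{2}$), is more explicit than the paper, which simply asserts that the algebroid $C^\kappa$-topology induces the usual $C^\kappa$-topology on foliations without spelling out the shift.
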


\subsubsection{Group actions}
We also obtain a stability result for actions of $1$-connected Lie groups. In fact, when 
$A= \mathfrak{g}\ltimes M$ is the Lie algebroid associated to a $1$-connected Lie group $G$ acting on $M$ then $G_x(A)\cong G_x$, so that for any orbit $\O$ of $G$, we have $H^1(A;\nu_\O)$ and $H^1(G_x;\nu_x)$ are isomorphic (here $\nu_x$ is the normal space to the orbit $\O$ through $x$). The set of smooth actions of $G$ also carries a natural $C^{\kappa}$-topology, and we conclude:

\begin{cor}
Let $\O$ be a compact orbit of an action of a $1$-connected Lie group $G$ on $M$, let $\kappa>\frac{n}{2}+1$ be an integer, and assume that 
\[ H^1(G_x;\nu_x)=0.\] 
Then for any neighborhood $V$ of $\O$ there exists a neighborhood $\mathcal{V}$ of the action in the $C^{\kappa}$-topology such that any action in $\mathcal{V}$ has a family of leaves in $V$, diffeomorphic to $\O$, smoothly parametrized by \emph{$\Gamma_{\text{flat}}(\nu_{\O})$}.
\end{cor}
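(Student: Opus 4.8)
The plan is to derive this corollary from the Lie algebroid stability theorem (Theorem \ref{theorem33}) together with Proposition \ref{prop-isotropy}, applied to the action Lie algebroid $A=\mathfrak{g}\ltimes M$ of the given $G$-action $\al$. Recall that, since $G$ is connected, the leaves of $A$ are precisely the orbits of $\al$, so $\O$ is a compact leaf of $A$; likewise, for any other smooth $G$-action $\al'$ the leaves of $A_{\al'}=\mathfrak{g}\ltimes M$ are exactly the orbits of $\al'$. Thus producing a ``nearby leaf diffeomorphic to $\O$'' for $A_{\al'}$ is the same as producing a ``nearby orbit diffeomorphic to $\O$'' for $\al'$, and it will be enough to check two things: (a) the cohomological hypothesis of Theorem \ref{theorem33} holds for $A$ along $\O$, and (b) the assignment $\al'\mapsto A_{\al'}$ is continuous for the relevant topologies, so that a $C^\kappa$-neighborhood of $\al$ is carried into a neighborhood of $A$ in the space of Lie algebroid structures.

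For (a), I would observe that $A|_\O$ is transitive and integrable, being integrated by the action groupoid $G\ltimes\O\rightrightarrows\O$; since $G$ is $1$-connected, the source fibers of $G\ltimes\O$ (copies of $G$) are simply connected, so $G\ltimes\O$ is the $s$-simply-connected integration of $A|_\O$ and its isotropy group at $x$ is $G_x$. By the description of $G_x(A)$ recalled before Proposition \ref{prop-isotropy} as the isotropy group of the Weinstein groupoid, this gives a canonical isomorphism $G_x(A)\cong G_x$ under which the linear holonomy representation becomes the linear isotropy representation of $G_x$ on $\nu_x$ (and the Bott connection on $\nu_\O$ becomes the canonical flat connection determined by the linearized isotropy action). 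In particular $G_x(A)$ is smooth, so Proposition \ref{prop-isotropy} applies and yields $H^1(A|_\O;\nu_\O)\cong H^1(G_x(A);\nu_x)\cong H^1(G_x;\nu_x)=0$, while in degree zero $H^0(A|_\O;\nu_\O)=\Gamma_{\text{flat}}(\nu_\O)$.

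For (b), I would fix a linear connection on the trivial bundle $\mathfrak{g}\times M$ and use it, together with the frame of constant sections, to encode each $A_{\al'}$ by its structure data $(a,c)$ as in the paragraph preceding Theorem \ref{theorem33}. The tensor $c$ involves only the (fixed) structure constants of $\mathfrak{g}$ and the (fixed) connection evaluated against the fundamental vector fields of $\al'$, while $a$ consists of the components of those fundamental vector fields; since the fundamental vector fields are obtained from $\al'$ by a single differentiation in the $G$-direction, $\al'\mapsto(a,c)$ is continuous from the $C^\kappa$ compact-open topology on actions to the $C^{\kappa-1}$-topology on Lie algebroid structures. This one-derivative loss is exactly why the statement asks for $\kappa>\tfrac n2+1$ rather than merely $\kappa>\tfrac n2$.

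Putting these together: with $\kappa-1>\tfrac n2$ and the vanishing from (a), Theorem \ref{theorem33} provides a $C^{\kappa-1}$-neighborhood $\mathcal W$ of $A$ in the space of Lie algebroid structures such that every structure in $\mathcal W$ has a family of leaves in $V$, diffeomorphic to $\O$, smoothly parametrized by $\Gamma_{\text{flat}}(\nu_\O)$ and depending continuously on the structure; taking $\mathcal V$ to be the preimage of $\mathcal W$ under the continuous map of (b) gives the desired $C^\kappa$-neighborhood of $\al$, and the leaves produced are orbits of the perturbed action. The one genuinely non-formal step, and the one I expect to cost the most care, is (b): checking that the passage from a $G$-action to its action algebroid is continuous, with the correct derivative bookkeeping, between the $C^\kappa$ compact-open topology on actions and the connection-dependent $C^{\kappa-1}$-topology on algebroid structures used in Theorem \ref{theorem33}. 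Everything else is a formal combination of Theorem \ref{theorem33}, Proposition \ref{prop-isotropy}, and the $1$-connectedness of $G$ — which enters only to identify $G\ltimes\O$ as the source-simply-connected integration of $A|_\O$, hence $G_x(A)\cong G_x$; without it one would only obtain stability inside the a priori larger class of all nearby Lie algebroid structures on $\mathfrak{g}\times M$, which need not come from $G$-actions.
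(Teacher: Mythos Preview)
Your proposal is correct and follows essentially the same route as the paper: apply Theorem~\ref{theorem33} to the action algebroid $A=\mathfrak{g}\ltimes M$, use the $1$-connectedness of $G$ to identify $G_x(A)\cong G_x$, and invoke Proposition~\ref{prop-isotropy} to convert the hypothesis $H^1(G_x;\nu_x)=0$ into $H^1(A|_\O;\nu_\O)=0$. The paper states this corollary with only a one-sentence justification; your write-up is more detailed, particularly in spelling out why $G\ltimes\O$ is the $s$-simply connected integration and in explaining the continuity of $\al'\mapsto A_{\al'}$ together with the one-derivative loss that accounts for the shift from $\kappa>\tfrac{n}{2}$ in Theorem~\ref{theorem33} to $\kappa>\tfrac{n}{2}+1$ here.
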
 

Notice that a Lie algebroid structure which is close to a Lie algebroid $T\F$ of a foliation must have injective anchor, and hence it is the Lie algebroid of a foliation. So the notions of stability for a foliation $\F$ in the sense of foliations and in the sense of Lie algebroids coincide. 

This is not anymore the case for Lie algebra actions: a Lie algebroid which is close to an action Lie algebroid $\gg\ltimes M$ need not be an action Lie algebroid. However, the first order approximation to a Lie algebroid along an orbit is an action algebroid. This explains why we still recover (infinitesimally) the classical stability result for actions.

\subsubsection{Poisson structures}
Finally, in the case of Poisson structures the two stability results do differ. There is a precise relationship between Theorem \ref{theorem11} and Theorem \ref{theorem33}, which we now explain. Recall that $H^{\bullet}_{\pi, S}(M;\nu_S)$ denotes the cohomology $H^{*}(T^*_SM;\nu_S)$, and it is computed by the complex
\[ \X^{\bullet}_{S}(M;\nu_S)= \Gamma(\wedge^{\bullet}T_{S}M\otimes \nu_S),\]
where the differential is induced by the operation of bracketing with $\pi$. There is a canonical inclusion
\begin{align*} 
i: \X^{\bullet}(M, S)&\rmap \X^{\bullet-1}_{S}(M; \nu_S)\\
X_1\wedge \ldots \wedge& X_k\mapsto \sum_{i} (-1)^{i+1} X_1\wedge \ldots \widehat{X_i} \ldots \wedge X_k \otimes (X_i \mod T_S)
\end{align*}
and this map is compatible with the differentials. This leads to:

\begin{lem} 
The inclusion $i$ induces an injection $H^{2}_{\pi}(M, S)\hookrightarrow H^{1}_{\pi, S}(M;\nu_S)$.
\end{lem}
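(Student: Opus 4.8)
The plan is to analyze the chain map $i\colon \X^{\bullet}(M,S)\to \X^{\bullet-1}_S(M;\nu_S)$ directly in the degrees that matter, namely establishing injectivity on $H^2$. Concretely, I would first exhibit a left inverse (up to homotopy) for $i$ in the relevant range, or at least produce a splitting of the underlying complexes that is compatible with $\d_\pi$ in low degrees. The natural candidate is the following: choose a splitting $\sigma\colon TS\to T^*_SM$ of the short exact sequence $0\to\nu^*(S)\to T^*_SM\to TS\to 0$ from Example \ref{ex:short:seq:Poisson} (equivalently, a complement to $\ker\pi^\sharp$ in $T^*_SM$, i.e.\ a choice of "Euler-like" vector field transverse structure), which dually gives a decomposition $T_SM\cong TS\oplus \nu_S$. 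Using this, one gets an explicit map back $r\colon \X^{\bullet-1}_S(M;\nu_S)\to \X^{\bullet}(M,S)$ by wedging a normal vector against a representative in $\wedge^{\bullet-1}TS\otimes\nu_S$ and projecting modulo $\Omega^\bullet(S)$; the point is that $r\circ i$ is the identity on $\X^\bullet(M,S)$ already at the level of complexes (this is a purely linear-algebra statement about the combinatorial map in the lemma, independent of $\d_\pi$).

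The key steps, in order, are: (1) verify that $i$ is indeed a chain map (stated in the excerpt, but I would want to see why $\Omega^\bullet(S)$ is killed and why bracketing with $\pi$ is intertwined — this uses that $\pi^\sharp$ is the anchor and that the de Rham differential on $S$ corresponds to $\d_\pi$ restricted to the image of $\Omega^\bullet(S)$); (2) using the splitting $\sigma$, construct $r$ and check $r\circ i=\mathrm{id}$ on the nose at chain level; (3) conclude that $i$ is split injective on cohomology \emph{if} $r$ is also a chain map — but $r$ need \emph{not} be a chain map for an arbitrary splitting, so the real argument is subtler. Therefore I would instead argue as follows for injectivity on $H^2$ specifically: suppose $\xi\in\X^2(M,S)$ is a $\d_\pi$-cocycle with $i(\xi)=\d_\pi\eta$ for some $\eta\in\X^0_S(M;\nu_S)=\Gamma(\nu_S)$; then $\d_\pi\eta$ lies in the image of $i$, and I would show this forces $\eta$ to come (modulo something in degree handled by the de Rham subcomplex) from a $\d_\pi$-primitive of $\xi$ in $\X^1(M,S)$, using the explicit low-degree formulas for $\d_\pi$ on multivector fields along $S$ and the fact that $i$ is injective in each fixed degree (degreewise, $i$ is clearly an injective bundle map since $r\circ i=\mathrm{id}$).

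The main obstacle I anticipate is step (3): controlling the interaction between the primitive $\eta\in\Gamma(\nu_S)$ of $i(\xi)$ and the de Rham subcomplex $\Omega^\bullet(S)\subset\X^\bullet_S(M)$ that was quotiented out to form $\X^\bullet(M,S)$. In other words, $i$ being degreewise injective is easy, but promoting this to injectivity on cohomology requires showing that a $\d_\pi$-coboundary in $\X^1_S(M;\nu_S)$ lying in $\im(i)$ is the image of a $\d_\pi$-coboundary in $\X^\bullet(M,S)$; this amounts to a small diagram chase comparing the short exact sequence $0\to\Omega^\bullet(S)\to\X^\bullet_S(M)\to\X^\bullet(M,S)\to 0$ with the analogous filtration of $\X^\bullet_S(M;\nu_S)$ by normal degree, and the careful point is that the connecting maps / the low-degree piece $\Gamma(\nu_S)$ versus $\Omega^1(S)$ match up correctly. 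I expect this to reduce to the observation that $H^1$ of the de Rham complex of $S$ sits compatibly, together with the explicit description $H^2_\pi(M,S)$ and $H^1_{\pi,S}(M;\nu_S)$ have in terms of flat sections and the linear model $j^1_S\pi$, so that the whole comparison can be checked on the first jet approximation where everything is computable.
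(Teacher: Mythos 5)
Your overall strategy (use that $i$ is a chain map, that it is injective in each degree of the relative complex, and then sort out the low-degree bookkeeping) can be made to work, but as written it has one cosmetic problem and one genuine gap; note also that the paper states this lemma without proof, so I am judging your argument on its own merits. The cosmetic issue: after fixing a splitting $T_SM\cong TS\oplus\nu_S$, the composite $r\circ i$ is \emph{not} the identity at the chain level; on the summand $\wedge^{k-q}TS\otimes\wedge^{q}\nu_S$ it is $\pm q\cdot\mathrm{id}$ (contraction followed by wedging back multiplies by the normal degree $q$). This is harmless, because all you actually need is that $i$ is injective in each degree of the quotient complex, and that is true: the kernel of $i$ on $\wedge^{k}T_SM$ is exactly $\Gamma(\wedge^{k}TS)$, which coincides with the image of $\Omega^{k}(S)$ under the dual anchor since $\omega_S$ is nondegenerate.

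The genuine gap is in your step (3). The observation that closes the argument is that in degree one $i$ is an \emph{isomorphism}: because the dual anchor identifies $\Omega^1(S)$ with $\Gamma(TS)$, one has $\X^{1}(M,S)=\Gamma(T_SM)/\Gamma(TS)=\Gamma(\nu_S)=\X^{0}_{S}(M;\nu_S)$, and $i$ in degree one is precisely this identification. With this in hand the proof is a two-line chase: if $\xi\in\X^{2}(M,S)$ is a cocycle with $i(\xi)=\d_\pi\eta$ for some $\eta\in\Gamma(\nu_S)$, let $\zeta\in\X^{1}(M,S)$ be the (unique) element with $i(\zeta)=\eta$; then $i(\xi-\d_\pi\zeta)=i(\xi)-\d_\pi i(\zeta)=0$ by the chain-map property, and injectivity of $i$ in degree $2$ gives $\xi=\d_\pi\zeta$, hence $[\xi]=0$. (This also explains why the lemma is specific to $H^{2}$: for $k\ge 3$ one would need surjectivity of $i$ in degree $k-1$, which fails.) Your proposal never isolates this degree-one isomorphism; instead it defers to a diagram chase with connecting maps, the de Rham $H^1$ of $S$, and a verification ``on the first jet approximation,'' which as described does not constitute a proof: no connecting homomorphisms are needed, and passing to the linear model does not by itself yield injectivity of the induced map on cohomology. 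If you replace that final paragraph by the lifting argument above, the proof is complete and short.
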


This inclusion can be seen as a reflection of the fact that a Lie algebroid structure on the cotangent bundle $T^*M$ is associated with a Poisson structure iff the anchor is skew-symmetric and the bracket of closed 1-forms is also a closed 1-form. There are Poisson structures $\pi$ with a symplectic leaf $S$ for which every nearby Poisson structure has nearby symplectic leaves diffeomorphic to $S$, while there exist some nearby Lie algebroids with no leaves diffeomorphic to $S$ (see Example \ref{ex:not:algbd:stable} below).
  
The conclusion is that the criteria for stability of symplectic leaves in Theorem \ref{theorem11} is stronger than the criteria arising from the general algebroid setting given by Theorem \ref{theorem33}.

\begin{rem}
One should also try to apply our results to recover at least some of the Kodaira-Spencer stability theory of complex structures. In \cite{Ko}, for example, Kodaira studies deformations of a complex submanifold $W$ of a complex manifold $V$. By looking at the holomorphic Lie algebroid over $V$ whose sections are the vector fields tangent to the submanifold $W$ (which has $W$ as leaf) one should be able to obtain a stability theorem for such submanifolds. This requires developing the analogue of our results in the complex holomorphic setting, so we leave this for future work.
\end{rem}

\section{Examples}     
\label{sec:examp}

We will now illustrate our main theorems with a few examples. Concerning terminology,
we will say that a symplectic leaf $S$ of a Poisson manifold $(M, \pi)$ is:
\begin{itemize}
\item \textbf{$C^{\kappa}$-stable} if for any neighborhood $V$ of $S$, there exists a neighborhood  $\mathcal{V}$ of $\pi$ in the $C^{\kappa}$-topology such that any $\theta\in \mathcal{V}$ has a leaf \emph{diffeomorphic} to $S$ and contained in $V$. 
\item \textbf{strongly $C^{\kappa}$-stable} if the same condition holds but with    
 ``\emph{diffeomorphic}'' replaced by ``\emph{symplectomorphic}''. 
\item \textbf{algebroid $C^{\kappa}$-stable} if for any neighborhood $V$ of $S$, there exists a neighborhood $\mathcal{V}$ of $(T^*M,[~,~]_\pi,\pi^\sharp)$ in the $C^{\kappa}$-topology such that any Lie algebroid structure in $\mathcal{V}$ has a leaf \emph{diffeomorphic} to $S$ and contained in $V$.
\end{itemize}
When $\kappa=\infty$, we drop $\kappa$ from the notation.
Our theorems imply the following criteria for stability: for a compact symplectic leaf $S$ and $\kappa>\frac{n}{2}+1$, one has:
\begin{description}
\item[Criterion 1] If $H^{2}_{\pi}(M,S)=0$ then $S$ is $C^{\kappa}$-stable.
\item[Criterion 2] If $H^{2}_{\pi,S}(M)=0$ then $S$ is strongly $C^{\kappa}$-stable.
\item[Criterion 3] If $H^{1}_{\pi,S}(M,\nu_{S})=0$ then $S$ is algebroid $C^{\kappa-1}$-stable.
\end{description}

Before we turn to the examples, let us also point out that for a Poisson manifold $(M, \pi)$ with $x\in M$ a singular point of $\pi$ (i.e, a 0-dimensional leaf $S=\{x\}$),
the relevant cohomologies are related to the isotropy Lie algebra $\mathfrak{g}_x$
of $\pi$ at $x$ as follows:
\begin{align*} 
&H^{2}_{\pi}(M,x)=H^{2}_{\pi, x}(M)\cong H^{2}(\mathfrak{g}_x), \\ 
&H^{1}_{\pi, x}(M,\nu_{x})=H^{1}(\mathfrak{g}_x;\mathfrak{g}_x).
\end{align*}
In particular, let $M=\mathfrak{g}^*$ be the dual of a Lie algebra endowed with 
its linear Poisson structure. The origin $x=0$ is a singular point and we have that $\mathfrak{g}_x= \mathfrak{g}$, so:
\[ 
H^{2}_{\pi, x}(M)=H^2(\gg),\quad H^{1}_{\pi, x}(M,\nu_{x})=H^{1}(\mathfrak{g};\mathfrak{g}).
\]
This will be used in the examples to follow.

\begin{ex}
On $\mathbb{R}^2$, the Poisson structures
\[ \pi_{\epsilon}= (x^2+ y^2+ \epsilon) \frac{\partial}{\partial x}\wedge \frac{\partial}{\partial y}\]
are symplectic when $\epsilon > 0$, while $\pi_{0}$ has $S=\{0\}$ as a singular point. Hence the origin is unstable for $\pi_{0}$. The cohomologies $H^{2}_{\pi_0}(M,S)$, $H^{2}_{\pi_0,S}(M)$ and $H^{1}_{\pi_0,S}(M,\nu_{S})$ are all non-zero (note that, in this example, $\gg_0$ is the 2-dimensional abelian Lie algebra). 
\end{ex}

\begin{ex}
\label{ex:not:algbd:stable}
On $\mathbb{R}^2$, the linear Poisson structure
\[ \pi= x \frac{\partial}{\partial x}\wedge \frac{\partial}{\partial y}\]
has the origin stable, but not algebroid stable.

The stability follows by a direct argument (or by applying Criterion 1). 
To show algebroid unstability, note that the Lie algebroid associated to $\pi$ can be described in terms of the coframe $e_1=\d x$, $e_2=\d y$, by:
\begin{align*}
&[e_1, e_2]=e_1,\\
\rho(e_1)&= x\frac{\partial}{\partial y},\quad 
\rho(e_2)=-x \frac{\partial}{\partial x}.
\end{align*}
One can find a Lie algebroid $(T^*\Rr^2,[\cdot, \cdot]_{\epsilon}, \rho_{\epsilon})$ arbitrary close to this one which has the same Lie bracket and a nonvanishing anchor, given by:
\[ 
\rho_{\epsilon}(e_1)= x\frac{\partial}{\partial y},\quad 
\rho_{\epsilon}(e_2)= - x \frac{\partial}{\partial x}+ \epsilon \frac{\partial}{\partial y}.
\]
Since $M=\gg^*$, where $\gg$ is the non-abelian 2-dimensional Lie algebra, in this example: 
\[ H^{2}_{\pi,x}(M)=0,\quad H^{1}_{\pi,x}(M,\nu_x)\neq 0. \]
\end{ex} 
 
\begin{ex}
Let $M=\mathfrak{su}(2)^*=\mathbb{R}^3$ with its linear Poisson structure
\[ \pi= x\frac{\partial}{\partial y}\wedge \frac{\partial}{\partial z}+ y \frac{\partial}{\partial z}\wedge \frac{\partial}{\partial x}+ z\frac{\partial}{\partial x}\wedge \frac{\partial}{\partial y}.\]
Its symplectic leaves are the origin and the spheres centered at the origin. All the leaves are both strongly stable and algebroid stable. The same happens with the linear Poisson structure on $\gg^*$, for any Lie algebra $\mathfrak{g}$ which is compact and semi-simple. This should be compared with Conn's linearization theorem \cite{Conn}.

In fact, let $M=\mathfrak{g}^*$ be the dual of a Lie algebra $\mathfrak{g}$ and let $G$ be the $1$-connected Lie group integrating $\gg$. The symplectic leaf through a point $\xi\in \mathfrak{g}$ is precisely the coadjoint orbit $\mathcal{O}_{\xi}$ containing $\xi$. Also, the cotangent Lie algebroid $T^*M$ coincides with the action Lie algebroid associated to the coadjoint action, so that $T^*M$ integrates to the action Lie groupoid $\G=G\ltimes\gg^*\tto\gg^*$. If $\gg$ is compact and semi-simple, so that $G$ is compact, the Van Est map gives isomorphisms (see the proof of Proposition \ref{prop-isotropy}):
\begin{align*}
&H^{2}_{\pi,\mathcal{O}_{\xi}}(M)\cong H^2(\G|_{\mathcal{O}_{\xi}}),\\
&H^1_{\pi,\mathcal{O}_{\xi}}(M;\nu(\mathcal{O}_{\xi}))\cong H^1(\G|_{\mathcal{O}_{\xi}};\nu(\mathcal{O}_{\xi})
\end{align*}
and both groups vanish (since $\G|_{\mathcal{O}_{\xi}}$ is proper).
\end{ex}

\begin{ex}
Let $(S,\omega)$ be a compact symplectic manifold and let $M=S\times\mathbb{R}$, with the Poisson structure $\pi$ whose symplectic leaves are $(S\times \{t\},\omega)$. Let us fix the leaf $S= S\times \{0\}$. One can check directly (or by Example \ref{ex:computations} below) that
\[ 
H^{2}_{\pi,S}(M)\cong H^1(S)\oplus H^2(S),\quad 
H^{1}_{\pi,S}(M,\nu_S)= H^{0}(S)\oplus H^{1}(S),
\]
hence these two groups can never be zero. One can also see directly that $S$ is neither strongly stable nor algebroid stable.

On the other hand, it is easy to see that the the relative Poisson cohomology is
\[ H^{2}_{\pi}(M,S)\cong H^1(S).\]
In fact, both cases ($S$ stable or unstable) may arise. 

To produce an unstable example one should, of course, choose $S$ so that 
$H^1(S)\neq 0$. A natural choice to make is $S=\Ss^{1}\times \Ss^{1}$ with the usual area form:
\[ \omega=\d\theta_1\wedge \d\theta_2.\]
Indeed, one can find on $\Ss^1\times \Ss^1\times \mathbb{R}$ the Poisson structure:
\[ \pi_{\epsilon}= \frac{\partial}{\partial \theta_2}\wedge \frac{\partial}{\partial \theta_1}+ \epsilon \frac{\partial}{\partial \theta_2}\wedge \frac{\partial}{\partial t}\]
whose leaves for $\epsilon\neq 0$ are never compact.
\end{ex}

\begin{ex}
\label{ex-stable}
Consider $M=\Ss^2\times \Ss^2 \times (-1,1)$, with the Poisson structure $\pi$ whose symplectic leaves are
$\Ss^2\times \Ss^2\times \{t\}$ endowed with the symplectic form
\[ \omega_{t}= (1-t)\omega_1+ (1+t)\omega_2,\]
where $\omega_i=\text{pr}_{i}^{*}\omega$ are the pull-backs by the projections 
$\text{pr}_i: \Ss^2\times \Ss^2\to \Ss^2$ of the area form on $\Ss^2$. For the leaf $S=\Ss^2\times \Ss^2 \times \{0\}$ we find (see Example \ref{ex:computations} below):
\[ 
H^{2}_{\pi}(M, S)= 0,\quad H^{2}_{\pi, S}(M)\cong \mathbb{R},\quad 
H^{1}_{\pi, S}(M;\nu_{S})= 0.
\]
It follows from Theorem \ref{theorem11} that $S$ is stable, and Theorem \ref{theorem33} shows that it is even algebroid stable. Notice that we cannot apply Theorem \ref{theorem22} to conclude that $S$ is strongly stable. In fact, $S$ is not strongly stable: consider the family of Poisson structures $\pi_{\epsilon}$ on $M$, with the same symplectic leaves $\Ss^2\times \Ss^2\times \{t\}$ but endowed with the symplectic form
\[ \omega_{t}^{\epsilon}=(1-t+\epsilon)\omega_1+(1+t+\epsilon)\omega_2.\]
We have $\pi_{0}=\pi$, but no symplectic leaf $(\Ss^2\times\Ss^2\times\{t\},\omega_{t}^{\epsilon})$
with $\epsilon\neq 0$ can be symplectomorphic to $(\Ss^2\times \Ss^2,\omega_{0})$. This follows by computing the symplectic volumes: 
\[ 
\vol(\Ss^2\times \Ss^2\times\{t\},\omega_{t}^{\epsilon})= (2+\epsilon)\vol(\Ss^2)\neq 2 
\vol(\Ss^2)= \vol(\Ss^2\times \Ss^2,\omega_{0}).\]
\end{ex}

\begin{ex}
\label{ex:computations}
The previous two examples fit into the following more general scheme which can be used to obtain many other interesting explicit examples. Let $M= S\times I$ where $I$ is an open interval containing the origin and assume that $(\omega_{t})_{t\in I}$ is a smooth family of symplectic forms on $S$. Then $M$ becomes a Poisson manifold
with symplectic leaves $(S\times \{t\},\omega_t)$. We look at the stability of $S= S\times \{0\}$. 

The cohomologies relevant for the stability of $S$ depend only on the topology of $S$ and on the variation of symplectic areas:
\[ \sigma:= \left.\frac{\d}{\d t}\right|_{t= 0} \omega_t\in \Omega^2(S).\]
In general, any closed two-form $\sigma\in \Omega^2(S)$ induces a cohomology $H^{\bullet}_{\sigma}(S)$ as follows:
If $\sigma$ is prequantizable, i.e., if it is the curvature 2-form of some connection on a principal $\Ss^1$-bundle $P\to S$, then $H^{\bullet}_{\sigma}(S)=H^\bullet(P)$. In the general case, one uses an algebraic model which is constructed as follows: Interpreting $\sigma$ as a chain map
\[
\xymatrix{\Omega^{\bullet}(S)\ar[r]^{\sigma \wedge - }& \Omega^{\bullet + 2}(S)}, 
\]
we form its mapping cone, which is a new complex $(C_{\sigma}(S),\d_{\sigma})$ with
\[ C_{\sigma}^{\bullet}(S)=\Omega^{\bullet}(S)\oplus \Omega^{\bullet - 1}(S),\quad \d_{\sigma}(\omega,\eta)= (\d\omega+\sigma\wedge\eta, \d\eta).\]
Its cohomology, denoted $H^{\bullet}_{\sigma}(S)$, is the algebraic model we are looking for. In fact, by a standard short exact sequence argument, one obtains a long exact sequence
\begin{align*}
&\xymatrix{ 
0\ar[r]& H^{1}(S)\ar[r]& H^{1}_{\sigma}(S) \ar[r]& H^0(S)\ar[r]^{\sigma}& H^2(S)\ar[r]&}\\
&\xymatrix{ &\ar[r]&H^{2}_{\sigma}(S)\ar[r] &H^1(S)\ar[r]^{\sigma}&H^3(S)\ar[r]& H^3_{\sigma}(S)\ar[r]&\ldots}
\end{align*}
which is an algebraic interpretation of the Gysin sequence for principal $\Ss^1$-bundles. 
Using this cohomology, one checks that the relative and the restricted Poisson cohomology complexes can be identified as
\[ \X^{\bullet}_{\pi}(M, S)\cong \Omega^{\bullet-1}(S),\quad \X^{\bullet}_{\pi, S}(M)\cong C_{\sigma}^{\bullet}(S)\cong \X^{\bullet}_{\pi,S}(M;\nu_S).\]
Hence, we conclude that:
\begin{enumerate}[(i)]
\item $H^{2}_{\pi}(M, S)= 0$ if and only if $H^1(S)= 0$.
\item $H^{2}_{\pi,S}(M)= 0$ if and only if $H^2(S)= \mathbb{R}\sigma$ and cup-product by $\sigma$ determines a injective morphism from $H^1(S)$ into $H^3(S)$.
\item $H^{1}_{\pi,S}(M;\nu_S)= 0$ if and only if $H^1(S)= 0$ and $\sigma$ is non-zero in cohomology.
\end{enumerate}
\end{ex}

\section{Outline of the proofs}  
\label{sec:outline}

We now give an outline of the proof of Theorem \ref{theorem11} (the proofs of the other two theorems are variations of the same idea). The first step, consists of a simple but crucial remark. Each of the other steps correspond to each of the sections to follow.

\textbf{Step 1:} The first step exploits the cosymplectic nature of small transversals to a symplectic leaf. Recall that a \textbf{cosymplectic submanifold} of $(M, \pi)$ is any immersed submanifold $N\hookrightarrow M$ with the property that $N$ is transverse to the symplectic leafs of $M$ and at every $x\in N$ the tangent space $T_xN$ intersects the tangent space to the leaf in a symplectic subspace. Each such submanifold carries an ``induced'' Poisson structure $\pi_N$ whose symplectic leafs are the connected components of the intersections of $N$ with the symplectic leaves of $M$. Note that the cosymplectic condition is an open condition both for $x\in M$ and for $\pi$ in the space of Poisson structures with the $C^{\kappa}$-topology. For a detailed discussion of this kind of submanifolds and its properties we refer to \cite[Section 8]{CrFe3}.

Let us recall now that one calls a Poisson structure $\theta$ on the total space of a bundle $p:E\to S$ \textbf{horizontally non-degenerate} if the fibers of $E$ are cosymplectic with respect to $\theta$. Let $S$ be an embedded symplectic leaf $S$ of a Poisson manifold $(M,\pi)$. Our first remark is that if we are interested on the behavior around $S$ of Poisson structures which are close to $\pi$, then we can restrict our attention to the case where $M=E$ is a vector bundle over $S$ and to Poisson structures $\theta\in\X^2(E)$ which are horizontally non-degenerate. Indeed, we just take $E=\nu(S)$, which can be identified with an arbitrarily small open neighborhood of $S$ in $M$ via an exponential map construction. Since the fibers of $E$ are transverse to $S$, $\pi$ satisfies the cosymplectic condition at points of $S$. Hence, for a small enough tubular neighborhood $\pi$ and all nearby $\theta$ will be horizontally nondegenerated.

\textbf{Step 2:} The second step is to identify the algebraic structure underlying horizontally non-degenerate Poisson structures. This is a certain bigraded algebra $\Omega^{\bullet,\bullet}_E$ which is endowed with a graded Lie bracket $[\cdot, \cdot]$. We observe that horizontally non-degenerate bivector fields $\theta$ correspond to elements in the algebra which break into three homogeneous components, defining a triple $(\theta^{\textrm{v}},\Gamma_{\theta},\Ff_{\theta})$. The condition for a bivector to be Poisson (i.e., $[\theta,\theta]=0$) translates into certain equations involving the homogeneous components, which we call ``the structure equations'' of the triple. They can be expressed in an elegant and efficient way using the graded Lie bracket. Moreover, this algebraic structure also allows us to define operations of ``restriction'' and ``linearization'' along a section, which will be crucial for the proofs. Along the way, we obtain a reformulation of Vorobjev's d!
 escription of horizontally non-degenerate Poisson structures (see  \cite{Vor}).

\textbf{Step 3:} In this step we use sections $s\in \Gamma(E)$ to produce symplectic leaves: any $s\in \Gamma(E)$ defines a submanifold of $E$ diffeomorphic to $S$, namely its graph
\[ \Graph(s):= \{ s(x): x\in S\}\subset E.\]
We will answer the following question: given a horizontally non-degenerate Poisson structure $\theta$, with associated triple $(\theta^{\textrm{v}}, \Gamma_{\theta},\Ff_{\theta})$, when
does a section $s\in \Gamma(E)$ define a symplectic leaf $\Graph(s)$ of $\theta$? We will see that one can restrict the first two components of the triple along $s$  to obtain an element
\[ c_{\theta,s}:= (\theta^{\textrm{v}}|_{s}, \Gamma_{\theta}|_{s}) \in \Omega^{1}_{S}\]
whose vanishing is equivalent to $\Graph(s)$ being a symplectic leaf of $\theta$.

\textbf{Step 4:} In this step we determine the p.d.e.~that $c_{\theta, s}$ satisfies arising from 
the condition that $\theta$ is Poisson. We will see that this p.d.e.~is just the ``linearization'' along $s$ of the structure equations of the geometric triple, and that it takes the form
\[ \overline{\d}_{\theta,s}(c_{\theta,s})= 0,\]
where $\overline{\d}_{\theta, s}$ is an operator obtained by linearizing the Poisson cohomology operator
around $s$. When $\theta=\pi$ and $s$ is the zero section, $\overline{\d}_{\pi,0}$ will be just the
differential on the complex computing the relative Poisson cohomology $H^\bullet_{\pi}(M,S)$, 
while the kernel of $\overline{\d}_{\pi,0}$ on $\Gamma(E)$ is precisely the parameter space $\Gamma_{\text{flat}}(\nu^{0}_{S})$ which appears in the statement of the theorem.

\textbf{Step 5:} Suppose $\theta$ is close to $\pi$. By Step 3, in order to produce symplectic leaves of $\theta$ diffeomorphic to $S$, we need to look for sections $s$ such that 
$c_{\theta, s}= 0$. We will find them among the critical points of the functional
\[ \Phi_{\theta}(s)= ||c_{\theta,s}||^{2},\]
for some convenient Sobolev norm $||\cdot||$. One must therefore make the appropriate choices of Hilbert spaces,
and check that the operations of restriction and linearization along sections still make sense and are continuous. In this step, we take care of the necessary analytic machinery.

\textbf{Step 6:} When $\theta=\pi$, $s=0$ is a degenerate critical point of the functional $\Phi_\theta$. Also, any section in $\Gamma_{\text{flat}}(\nu^{0}_{S})$ is also a zero of $\Phi_\pi$. Hence, we
restrict $\Phi_\pi$ to the orthogonal complement $W$ of $\Gamma_{\text{flat}}(\nu^{0}_{S})$ in $\Gamma(E)$. Then $s=0$ becomes a strongly non-degenerate critical point of $\Phi_{\pi}$, hence nearby $\theta$'s will produce functionals $\Phi_{\theta}$ which admit critical points parametrized by $\Gamma_{\text{flat}}(\nu^{0}_{S})$. In this final step we will show that any small enough critical point $s$ of $\Phi_{\theta}$ must satisfy $c_{\theta, s}= 0$. 

First, the critical point condition is equivalent to
 \[ \overline{\d}_{\pi, 0} \overline{\d}_{\theta, s}^{*}(c_{\theta,s})=0.\] 
This, combined with the equation already established in Step 4, shows that $c_{\theta, s}$ is in the kernel of the operator
\[ 
\lap_{\theta,s}:=\overline{\d}_{\pi,0}\overline{\d}_{\theta,s}^{*}(c_{\theta,s})+\overline{\d}_{\pi,0}^*\overline{\d}_{\theta,s}(c_{\theta, s}),
\]
defined on the appropriate Sobolev spaces. However, when $\theta=\pi$ and $s=0$, we show that
the ellipticity of the complex computing $H^\bullet_{\pi}(M, S)$ implies that the operator $\lap_{\pi,0}$ is invertible. It follows that $\lap_{\theta, s}$ must be an isomorphism for $s$ small enough and $\theta$ close to $\pi$. Hence, we must have $c_{\theta,s}= 0$.

\section{Algebraic-geometric framework}                  %
\label{sec:hor:non-degenerate}                            %



In this section we fix a vector bundle $p:E\to S$ and we focus our attention on horizontally non-degenerate Poisson bivectors $\theta\in\X^2(E)$. We introduce a bigraded algebra $\Omega^{\bullet,\bullet}_E$, endowed with a graded Lie bracket $[\cdot, \cdot]$, that allows for an efficient description of horizontally non-degenerate Poisson structures in terms of triples. In the next two sections we will use this algebraic structure to define operations of ``restriction'' and ``linearization'' along a section. The description of horizontally non-degenerate Poisson structures in terms of geometric triples appeared first in Vorobjev's (see \cite{Vor}), but he did not make explicit use of the graded Lie algebra $\Omega_E$, which plays a central role in our approach.

\subsection{The bigraded algebra $\Omega_E$}
The $(q,p)$-component of $\Omega_E$ is obtained as follows. We consider the vector bundle 
$\wedge^pT^*S\otimes \wedge^q E$ and take its pullback under the projection $p:E\to S$. Then $\Omega^{q,p}_E$ is the space of sections of this pullback:
\[ \Omega^{q,p}_E:=\Gamma(p^*(\wedge^pT^*S\otimes \wedge^q E)). \]
It will also be convenient to consider the ``restriction'' of $\Omega_E$ to $S$, denoted $\Omega_S$, which is just the space of differential forms in $S$ with values in $\wedge E$. Its $(q,p)$-component is then:
\[ \Omega_{S}^{q,p}=\Omega^p(S;\wedge^q E).\]
Note that, algebraically, 
\[ \Omega_{E}= \Omega_{S}\otimes_{C^{\infty}(S)} C^{\infty}(E).\]
Therefore, we can also view $\Omega_{E}^{q,p}$ as the space of $p$-forms on $S$ with values in 
$q$-vertical multivector fields on $E$:
\[ \Omega_{E}^{q,p}= \Omega^p(S)\otimes_{C^{\infty}(S)} \X^{q}(\Ver),\]
where $\Ver=\Ker \d p\subset TE$ denotes the vertical subbundle and $\X^q(\Ver)$ the vertical multivector fields: $\X^q(\Ver):=\Gamma(\wedge^q \Ver)\subset\X^q(E)$.

The last description of $\Omega_{E}$ shows that this space is naturally endowed with a bracket
\[ [\cdot, \cdot]_{\Omega_E}: \Omega_{E}^{q,p}\times \Omega_{E}^{q',p'}\rmap \Omega_{E}^{q+q'-1,p+p'}\]
which arises from the wedge product of forms on $S$ and the Schouten bracket on $\X^{\bullet}(\Ver)$: if $\al_1\in\Omega_{E}^{q,p}$ and $\al_2\in\Omega_{E}^{q',p'}$ we set 
\begin{multline*}
[\al_1,\al_2]_{\Omega_E}(X_1,\dots,X_{p+p'}):=\\\sum_{\sigma}(-1)^{|\sigma|}
[\al_1(X_{\sigma(1)},\dots,X_{\sigma(p)}),\al_2(X_{\sigma(p+1)},\dots,X_{\sigma(p+p')})].
\end{multline*}
Henceforth we will drop the subscipt $\Omega_E$ from the notation (this is consistent with the fact that we use $[~,~]$ for the Schouten bracket and that this bracket extends it).

If one considers local coordinates $(U,x^i)$ on $M$ and a local frame $(e_a)$ of $E$ over $U$, inducing fiberwise coordinates $(y^a)$ on $E$, an element $u\in \Omega^{q,p}_E$ is written as:
\begin{equation}
\label{general-u} 
u= u_I^J(x,y)~\d x^{I}\otimes\partial y_{J}
\end{equation}
for some functions $u_I^J\in C^\infty(p^{-1}(U))$. Here $I=(i_1,\ldots,i_p)$ is a multi-index of length $|I|=p$, $J=(a_1,\ldots,a_q)$ is a multi-index of length $|J|=q$, and we sum over repeated multi-indices. The elements $\d x^I$ are homogeneous of degree $(0,p)$ and defined by
\[ \d x^{I}= \d x^{i_1}\wedge\ldots\wedge \d x^{i_p},\]
while the elements $\partial y_{J}$ are homogeneous of degree $(q,0)$ and, according to our two descriptions of $\Omega_E$, can be interpreted either as a section of $\wedge^q\Ver$ or as a section of $\wedge^q E$:
\[ \partial y_{J}= \frac{\partial}{\partial y^{i_1}} \wedge \ldots \wedge \frac{\partial}{\partial y^{i_q}}= e_{i_1}\wedge \ldots \wedge e_{i_q}.\]
We will call the $u_I^J(x,y)$ the \textbf{local coefficients} of the element $u$. Notice that $\Omega_S\subset \Omega_E$ consists of those elements whose local coefficients do not depend on $y$. We will also need the subspace $\Omega_{E,\text{lin}}\subset\Omega_E$ consisting of those elements whose local coefficients $u_I^J(x,y)$ are linear in $y$. In a more invariant form, they can be identified with 
\[ 
\Omega_{E,\text{lin}}=\Omega_{S}\otimes_{C^{\infty}(S)}\Gamma(E^*)=\Omega(S;\wedge E\otimes E^*),
\]
where we think of a section of $E^*$ as a fiberwise linear function on $E$. 

\begin{rem}
\label{purely-algebraic}
Notice that $[\cdot, \cdot]$ is trivial on $\Omega_S$, induces a graded Lie algebra structure on $\Omega_{E, \text{lin}}$, and makes $\Omega_S$ into a representation of $\Omega_{E, \text{lin}}$: for each $v\in \Omega_{E,\text{lin}}$ we have
\[ \ad_{v}: \Omega_S\to \Omega_S,\quad w\mapsto [v, w].\]
Note also that, the Lie bracket induced on the subspace $\Omega_{E, \text{lin}}$ and its representation $\Omega_S$, is purely algebraic: it involves no derivatives and takes place fiberwise.
\end{rem}

Connections also fit naturally in this framework. Let $\Gamma$ be a connection on $E$, i.e., a horizontal distribution $\Hor_{\Gamma}\subset TE$ with the property that 
\[ TE= \Hor_{\Gamma}\oplus \Ver. \]
The corresponding horizontal lift operation $\hor=\hor_{\Gamma}$,
\[ \hor_{e}: T_{p(e)}S\rmap T_eE,\quad (e\in E)  \]
associates to each vector $X\in T_{p(e)}S$ the unique horizontal vector $\hor_{e}(X)\in \Hor_{\Gamma, e}$ that projects to $X$ under $p$. If one introduces local coordinates $(x^i, y^a)$ as above, the connection $\Gamma$ is determined by functions $\Gamma_{i}^{j}(x,y)$ such that:
\begin{equation}
\label{general-c}
 \hor_{\Gamma}(\frac{\partial}{\partial x^i})= \frac{\partial}{\partial x^i}+\Gamma_{i}^{a}(x, y)\frac{\partial}{\partial y^a}.
\end{equation}
We call $\Gamma_{i}^{a}(x, y)$ the local coefficients of the connection $\Gamma$.

A connection can be interpreted in terms of the graded algebra $\Omega_E$ as an operator
\[ \d_{\Gamma}: \Omega_{E}^{\bullet, \bullet}\rmap \Omega_{E}^{\bullet, \bullet+1},\]
defined by the following Koszul type formula:
\begin{multline*} 
(\d_{\Gamma}\omega)(X_1, \ldots , X_{p+1})= \\
\sum_{i}(-1)^{i+1} \Lie_{\textrm{hor}(X_i)}(\omega(X_1, \ldots , \widehat{X_i}, \ldots , X_{p+1}))+\\
+\sum_{i< j} (-1)^{i+j}\omega([X_i, X_j], \ldots , \widehat{X_i}, \ldots, \widehat{X_j}, \ldots , X_{p+1}),
\end{multline*}
for $\omega\in \Omega^{q,p}_{E}$, $X_1, \ldots , X_{p+1}\in \X(S)$.

The operator $\d_{\Gamma}$ usually fails to be a differential. This failure is measured by the curvature of the connection $\Gamma$, which is usually defined by the expression
\[ 
\Omega_{\Gamma}(X, Y)= [\textrm{hor}(X), \textrm{hor}(Y)]- \textrm{hor}([X, Y])\in \X(\Ver),
\]
for $X,Y\in\X(S)$. In our context, this tensor can be viewed as an element in on our bigraded algebra $\Omega_{\Gamma}\in \Omega_{E}^{1,2}$, and we have:
\begin{equation}
\label{eq:curv:connection}
\d_{\Gamma}^2=\ad_{\Omega_{\Gamma}}.
\end{equation}

Linear connections fit into this framework as follows. A linear connection $\Gamma$ on $E$ determines (and is determined by) a covariant derivative operator $\nabla:\X(S)\otimes\Gamma(E)\to\Gamma(E)$. In local coordinates, $\Gamma$ is linear iff the coefficients $\Gamma^{i}_{a}(x, y)$ are linear in $y$:
\[ \Gamma_{i}^{a}(x, y)= \Gamma^{a}_{i, b} (x) y^b,\]
and the associated covariant derivative operator is then given by:
\[ \nabla_{\frac{\partial}{\partial x_i}}~e_a= \Gamma_{i, a}^{b} e_b.\]
Alternatively, one can use the bigraded algebra $\Omega_E$. A connection $\Gamma$ is linear 
if and only if $\d_{\Gamma}$ preserves $\Omega_{E,\textrm{lin}}$. In this case, $\d_{\Gamma}$ also preserves $\Omega_S$ and its restriction to $\Omega_{S}^{1,\bullet}=\Omega^{\bullet}(S;E)$ is an operator
\[ \d_{\Gamma}: \Omega^{\bullet}(S;E)\to \Omega^{\bullet+1}(S;E)\]
which is just another way of viewing the covariant derivative associated to the linear connection. 

\subsection{The geometric triples $(\theta^{\text{\rm v}}, \Gamma_{\theta}, \mathbb{F}_{\theta})$}
After these preliminaries about connections, we can now return to the study of horizontally non-degenerate Poisson structures. Recall that a bivector field $\theta\in\X^2(E)$ is 
horizontally nondegenerated if
\[ TE= \Ver\oplus(\Ver)^{\perp,\theta}.\] 
Therefore, we have a connection $\Gamma_\theta$ with horizontal distribution defined by
\[ \Hor_{\theta}:= (\Ver)^{\perp, \theta}.\]
Next, observe that $\theta$ ``restricts'' to the fibers of $E$, since these fibers are cosymplectic submanifolds. We denote this ``restriction'' by $\theta^{\textrm{v}}\in \Omega_{E}^{2,0}$. Finally, the cosymplectic condition also implies that the restriction of $\theta$ to $\Hor_{\theta}$ is non-degenerate hence it defines an element in $\wedge^2\Hor_{\theta}^*$. The differential $\d p:TE\to TS$ induces an isomorphism $\Hor_{\theta}\to TS$ and the image of $(\theta|_{\Hor_\theta})^{-1}$ under this map is an element $\mathbb{F}_{\theta}\in\Omega_{E}^{0,2}$. In this way, to a horizontally non-degenerate bivector field $\theta$ we associate a triple $(\theta^{\textrm{v}}, \Gamma_{\theta}, \mathbb{F}_{\theta})$. Conversely, it is not difficult to see that the bivector $\theta$ can be reconstructed from the resulting triple. 

Let us now ask what does the condition for $\theta$ to be Poisson, i.e., $[\theta,\theta]=0$,
corresponds to, in terms of the associated triple $(\theta^{\textrm{v}},\Gamma_{\theta}, \mathbb{F}_{\theta})$. The answer is given by the following result due to Vorobjev \cite{Vor}:

\begin{thm} 
\label{Vorobjev} 
There is a 1-1 correspondence between horizontally non-degenerate Poisson structures $\theta$ on a vector bundle $p:E\to S$ and triples $(\theta^{\text{\rm v}},\Gamma_\theta,\mathbb{F}_\theta)$,
consisting of elements $\theta^{\text{\rm v}}\in \Omega_{E}^{2,0}$, $\mathbb{F}_\theta\in \Omega_{E}^{0,2}$ and a connection $\Gamma_\theta$ on $E$, satisfying the structure equations:
\begin{enumerate}[(i)]
\item $[\theta^{\text{\rm v}}, \theta^{\text{\rm v}}]= 0$ (i.e., $\theta^{\textrm{v}}$ is a vertical Poisson structure).
\item $\d_{\Gamma_\theta}\theta^{\text{\rm v}}= 0$ (i.e., parallel transport along $\Gamma$ preserves $\theta^{\text{\rm v}}$).
\item $\d_{\Gamma_\theta}\Ff=0$.
\item $\Omega_{\Gamma_\theta}=[\Ff_\theta,\theta^{\text{\rm v}}]$.
\end{enumerate}
\end{thm}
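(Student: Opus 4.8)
The plan is to prove the two halves of the correspondence separately. First, I would show that the assignment $\theta\mapsto(\theta^{\textrm{v}},\Gamma_\theta,\mathbb{F}_\theta)$ described above, together with an explicit reconstruction map, are mutually inverse bijections between horizontally non-degenerate bivectors on $E$ and triples $(\theta^{\textrm{v}},\Gamma_\theta,\mathbb{F}_\theta)$ (with $\mathbb{F}_\theta$ fibrewise non-degenerate, which is implicit in the statement). Second, I would show that under this bijection the single equation $[\theta,\theta]=0$ on $E$ unwinds \emph{exactly} into the system (i)--(iv) living in $\Omega_E$.

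For the bijection I would argue in adapted local coordinates. Fix $(U,x^i)$ on $S$, a local frame $(e_a)$ of $E|_U$ and the induced fibre coordinates $(y^a)$, and write $\theta=\tfrac12\theta^{ij}\partial_{x^i}\wedge\partial_{x^j}+\theta^{ia}\partial_{x^i}\wedge\partial_{y^a}+\tfrac12\theta^{ab}\partial_{y^a}\wedge\partial_{y^b}$. Horizontal non-degeneracy of $\theta$ is precisely invertibility of the matrix $(\theta^{ij})$; granting this, one checks that there is a unique choice of connection coefficients $\Gamma_i^a$ such that the mixed component of $\theta$ in the adapted frame $\widetilde\partial_i:=\hor_\Gamma(\partial_{x^i})=\partial_{x^i}+\Gamma_i^a\partial_{y^a}$ vanishes, namely $\Gamma_i^a=(\theta_{ij})\theta^{ja}$ where $(\theta_{ij})$ is the inverse matrix of $(\theta^{ij})$; this defines $\Gamma_\theta$. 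In this frame $\theta=\tfrac12\mathbb{F}^{ij}\widetilde\partial_i\wedge\widetilde\partial_j+\theta^{\textrm{v}}$ with $(\mathbb{F}^{ij}):=(\theta^{ij})$ invertible, $\mathbb{F}_\theta$ is the $2$-form with matrix $(\mathbb{F}^{ij})^{-1}$, and $\theta^{\textrm{v}}$ is the surviving vertical block. The reverse assignment simply reads $\theta=\tfrac12\mathbb{F}^{ij}\widetilde\partial_i\wedge\widetilde\partial_j+\theta^{\textrm{v}}$ off a triple, and one verifies directly that the two assignments are mutually inverse and that the result is independent of the chosen chart and frame, so that it agrees with the invariant description in the text.

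With the bijection established, it remains to expand $[\theta,\theta]=0$. Writing $\theta=\theta^{\textrm{h}}+\theta^{\textrm{v}}$ with $\theta^{\textrm{h}}=\tfrac12\mathbb{F}^{ij}\widetilde\partial_i\wedge\widetilde\partial_j$ the horizontal bivector determined by $(\Gamma_\theta,\mathbb{F}_\theta)$, bilinearity of the Schouten bracket gives $[\theta,\theta]=[\theta^{\textrm{v}},\theta^{\textrm{v}}]+2[\theta^{\textrm{v}},\theta^{\textrm{h}}]+[\theta^{\textrm{h}},\theta^{\textrm{h}}]$. I would decompose this section of $\wedge^3 TE$ along the $\Gamma_\theta$-splitting $\wedge^3 TE=\bigoplus_{q+p=3}\wedge^q\Ver\otimes\wedge^p\Hor_\theta$, which via $\d p$ becomes the tri-degree decomposition with components in $\Omega_E^{3,0}$, $\Omega_E^{2,1}$, $\Omega_E^{1,2}$ and $p^*\wedge^3 TS\cong\Omega_E^{0,3}$. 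The key point is that each summand contributes only to prescribed components, and each such component is identified with an operator already introduced: $[\theta^{\textrm{v}},\theta^{\textrm{v}}]$ lies in $\Omega_E^{3,0}$ and is the vertical Schouten square, so its vanishing is (i); the $\Omega_E^{2,1}$-component of $2[\theta^{\textrm{v}},\theta^{\textrm{h}}]$ is, by the definition of $\d_{\Gamma_\theta}$ through Lie derivatives along horizontal lifts, exactly $\d_{\Gamma_\theta}\theta^{\textrm{v}}$, so its vanishing is (ii); the $\Omega_E^{0,3}$-component receives only $[\theta^{\textrm{h}},\theta^{\textrm{h}}]$ and vanishes iff $\d_{\Gamma_\theta}\mathbb{F}_\theta=0$ --- the relative analogue of the classical fact that a non-degenerate $2$-form is closed iff its inverse bivector is Poisson --- giving (iii); and the $\Omega_E^{1,2}$-component collects the vertical bracket term $-[\mathbb{F}_\theta,\theta^{\textrm{v}}]$ coming from $2[\theta^{\textrm{v}},\theta^{\textrm{h}}]$ and the curvature term $\Omega_{\Gamma_\theta}$ coming from $[\theta^{\textrm{h}},\theta^{\textrm{h}}]$ (since $[\hor(X),\hor(Y)]-\hor([X,Y])=\Omega_{\Gamma_\theta}(X,Y)$), so its vanishing is (iv). Conversely, (i)--(iv) visibly force every component of $[\theta,\theta]$ to vanish.

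I expect the main obstacle to be the bookkeeping of this last step: matching the three ``mixed'' components of the Schouten expansion with $\d_{\Gamma_\theta}$, $\Omega_{\Gamma_\theta}$ and the $\Omega_E$-bracket, getting every sign, cyclic sum and combinatorial factor to agree --- in particular so that the $\Omega_E^{1,2}$-equation comes out exactly as $\Omega_{\Gamma_\theta}=[\mathbb{F}_\theta,\theta^{\textrm{v}}]$ --- and, more conceptually, making precise the dictionary between $(\X^\bullet(E),[\cdot,\cdot])$ and the bigraded algebra $\Omega_E$ equipped with $\d_{\Gamma_\theta}$, so that $\theta^{\textrm{h}}$ (which is not literally an element of $\Omega_E$) can be manipulated inside the algebra. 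Once that dictionary is in place --- which is exactly the organizing role played by $\Omega_E$ --- the four structure equations fall out as the four homogeneous components of $[\theta,\theta]=0$.
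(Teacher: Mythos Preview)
The paper does not actually prove this theorem: it is stated as a result due to Vorobjev and attributed to \cite{Vor}, with no proof given in the text. The subsequent proposition (that $\d_\theta^2=0$ and that $(\Omega_E,\d_\theta)$ computes Poisson cohomology) is proved, but that is a different statement.

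Your approach is the standard one and is essentially correct: establish the bijection at the level of horizontally non-degenerate bivectors versus triples with fibrewise non-degenerate $\mathbb{F}_\theta$, and then split $[\theta,\theta]$ according to the bigrading induced by $TE=\Hor_\theta\oplus\Ver$. Your identification of the four homogeneous pieces with the four structure equations is right in outline. Two places deserve particular care. First, the passage between $\theta^{\textrm{h}}$ (a horizontal bivector, the inverse of $\mathbb{F}_\theta$) and $\mathbb{F}_\theta$ itself: the $(0,3)$ and $(1,2)$ components you write down are statements about $\mathbb{F}_\theta$, not about $\theta^{\textrm{h}}$, so the argument really does need the ``non-degenerate $2$-form is closed iff the inverse bivector is Poisson'' lemma, adapted to the horizontal/vertical setting, and you should make sure the curvature and bracket terms survive the inversion with the right signs. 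Second, the $(2,1)$ component of $2[\theta^{\textrm{v}},\theta^{\textrm{h}}]$ has two sources---derivatives of $\theta^{ab}$ along $\widetilde\partial_i$ and the brackets $[\partial_{y^a},\widetilde\partial_i]$---and both must be absorbed into the Koszul formula for $\d_{\Gamma_\theta}\theta^{\textrm{v}}$; you state this but the verification is where most of the work lies. You have correctly flagged all of this as the main obstacle, so there is no conceptual gap in the plan.
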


There is another way of looking at the structure equations in terms of the graded algebra $\Omega_E$. For that, we observe that:
\begin{itemize}
\item The vertical bivector $\theta^{\textrm{v}}\in\Omega_{E}^{2,0}$ induces an operator
\[ \d^{1,0}_{\theta}:=\ad_{\theta^{\textrm{v}}}: \Omega_{E}^{q,p}\to \Omega_{E}^{q+1,p};\]
\item The connection $\Gamma_\theta$ induces an operator 
\[ \d^{0,1}_{\theta}:=\d_{\Gamma_\theta}:\Omega_{E}^{q,p}\to\Omega_{E}^{q,p+1}; \]
\item The 2-form $\Ff_{\theta}\in\Omega_{E}^{2,0}$ induces also an operator 
\[ \d^{-1,2}_{\theta}:=\ad_{\Ff_\theta}: \Omega_{E}^{q,p}\rmap \Omega_{E}^{q-1,p+2}.\]
\end{itemize}
We also introduce the total operator (which rises the total degree by $1$!):
\begin{equation}
\label{total-op} 
\d_{\theta}:= \d^{1,0}_{\theta}+\d^{0,1}_{\theta}+\d^{-1,2}_{\theta}:\Omega_E\to\Omega_E,
\end{equation}
Then we have:

\begin{prop}
If $\theta\in\X^2(E)$ is a horizontally nondegenerated Poisson tensor on a vector bundle $p:E\to S$ then $\d_\theta^2=0$. The complex $(\Omega_E,\d_\theta)$ is isomorphic to the Poisson cohomology complex $(\X(E),\d_\theta)$.
\end{prop}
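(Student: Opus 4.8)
The plan is to reduce the claim to the previous theorem (Vorobjev's description) together with a bookkeeping identification of $\Omega_E$ with $\X^\bullet(E)$. First I would exhibit an explicit isomorphism of graded vector spaces
\[ \Psi:\Omega_E\rmap \X^\bullet(E),\qquad \Omega_E^{q,p}\ni u\longmapsto \Psi(u)\in\X^{p+q}(E),\]
defined as follows: an element $u\in\Omega_E^{q,p}$ is, by the last description in the previous subsection, a $p$-form on $S$ with values in vertical $q$-multivector fields on $E$; using the connection $\Gamma_\theta$ (equivalently, the horizontal lift $\hor_{\Gamma_\theta}$) one turns the $p$ ``form slots'' into honest horizontal multivector fields and takes the wedge with the vertical part, so that locally $u_I^J\,\d x^I\otimes\partial y_J$ is sent to $u_I^J\,\widehat{\partial x}^I\wedge\partial y_J$, where $\widehat{\partial x^i}=\hor_{\Gamma_\theta}(\partial/\partial x^i)=\partial/\partial x^i+\Gamma^a_i\partial/\partial y^a$. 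Since $TE=\Hor_\theta\oplus\Ver$, this $\Psi$ is a (filtration-respecting) isomorphism onto $\X^\bullet(E)$; that part is routine multilinear algebra.

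Next I would show that under $\Psi$ the operator $\d_\theta$ of \eqref{total-op} corresponds to $\d_\theta:=[\theta,\cdot]$ on $\X^\bullet(E)$. The point is to decompose $\theta$ itself according to the splitting $TE=\Hor_\theta\oplus\Ver$: by construction of the geometric triple, $\theta=\Psi(\mathbb F_\theta)+\theta^{\mathrm v}$ where $\mathbb F_\theta$ lives in bidegree $(0,2)$ (purely horizontal) and $\theta^{\mathrm v}$ in bidegree $(2,0)$ (purely vertical), while the connection $\Gamma_\theta$ is precisely what is needed to define the horizontal lift used in $\Psi$. Then $[\theta,\cdot]=[\Psi(\mathbb F_\theta),\cdot]+[\theta^{\mathrm v},\cdot]$, and I would check term by term — using the properties of the Schouten bracket from the Conventions, the definition of $\d_\Gamma$ via the Koszul formula, and the curvature identity \eqref{eq:curv:connection} — that $[\theta^{\mathrm v},\cdot]$ matches $\d_\theta^{1,0}=\ad_{\theta^{\mathrm v}}$ plus the $\d^{-1,2}$-type contribution coming from how $\theta^{\mathrm v}$ interacts with horizontal lifts, and that $[\Psi(\mathbb F_\theta),\cdot]$ supplies $\d_\theta^{0,1}=\d_{\Gamma_\theta}$ and $\d_\theta^{-1,2}=\ad_{\mathbb F_\theta}$ in the appropriate bidegrees. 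In short: $\Psi^{-1}\circ[\theta,\cdot]\circ\Psi=\d^{1,0}_\theta+\d^{0,1}_\theta+\d^{-1,2}_\theta=\d_\theta$.

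Once the intertwining $\Psi^{-1}\circ\d_\theta\circ\Psi=\d_\theta$ is established, both conclusions are immediate: $\d_\theta^2=0$ on $\Omega_E$ because $[\theta,[\theta,\cdot]]=\tfrac12[[\theta,\theta],\cdot]=0$ by the graded Jacobi identity and $[\theta,\theta]=0$ (Theorem \ref{Vorobjev}), and $(\Omega_E,\d_\theta)\cong(\X^\bullet(E),\d_\theta)$ is exactly the statement that $\Psi$ is a chain isomorphism. I expect the main obstacle to be the second step — verifying, with the correct signs and the correct distribution of terms across the three bidegrees $(1,0)$, $(0,1)$, $(-1,2)$, that $[\theta,\cdot]$ decomposes as claimed; this is a sign-intensive computation where the conventions on the Schouten bracket and on the Koszul formula for $\d_\Gamma$ must be tracked carefully, and it is essentially where Vorobjev's structure equations (i)–(iv) get re-encoded as $\d_\theta^2=0$. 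An alternative, slightly more economical route avoids constructing $\Psi$ explicitly: since Theorem \ref{Vorobjev} already gives the structure equations, one can verify $\d_\theta^2=0$ directly by expanding $(\d^{1,0}_\theta+\d^{0,1}_\theta+\d^{-1,2}_\theta)^2$ into its homogeneous components and matching each with one of the four structure equations (e.g.\ the $(2,1)$-component gives $[\d_\Gamma\theta^{\mathrm v},\cdot]$, forced to vanish by (ii); the $(0,2)$-component reproduces (iv) via \eqref{eq:curv:connection}), and then separately note that the reconstruction of $\theta$ from its triple furnishes the isomorphism of complexes. I would present the $\Psi$-based argument as the main line and mention the direct verification as a remark.
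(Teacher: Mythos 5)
Your overall strategy (establish the isomorphism with the Poisson complex first, then get $\d_\theta^2=0$ from $[\theta,\theta]=0$ and the graded Jacobi identity) is a legitimate reorganization, and the ``alternative'' you relegate to a remark is in fact the paper's proof: the paper expands $\d_\theta^2$ into its bidegree components, matches them against the structure equations of Theorem \ref{Vorobjev} together with \eqref{eq:curv:connection}, and then simply records that the $\theta$-dependent splitting identifies $(\Omega_E,\d_\theta)$ with $(\X^\bullet(E),[\theta,\cdot])$. The gap in your main line is at the construction of $\Psi$: a connection lifts \emph{vectors}, not covectors, so it cannot ``turn form slots into horizontal multivector fields''. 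Your local prescription $\d x^I\otimes\partial y_J\mapsto \widehat{\partial x}^I\wedge\partial y_J$ identifies $\d x^i$ with $\hor(\partial/\partial x^i)$, which is frame dependent and does not define a global map; accordingly your identity $\theta=\Psi(\Ff_\theta)+\theta^{\mathrm{v}}$ also fails for this $\Psi$ (locally it inserts the matrix of $\Ff_\theta$ where its inverse is required). The identification the paper intends is $p^*T^*S=(\Ver)^0\to\Hor_\theta$ via $\theta^\sharp$, i.e.\ it uses the horizontal nondegeneracy of $\theta$ (equivalently $\Ff_\theta$), not only $\Gamma_\theta$ --- this is why the paper stresses that the isomorphism ``depends on $\theta$''. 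With this corrected $\Psi$ the horizontal component of $\theta$ does correspond to $\Ff_\theta$ (up to sign) and your plan can be resumed.

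There is a second, subtler point you underestimate: the intertwining $\Psi^{-1}\circ[\theta,\cdot]\circ\Psi=\d_\theta$ is \emph{not} a formal bookkeeping identity valid for every horizontally non-degenerate bivector. If it were, $\d_\theta^2=0$ would be equivalent to $[\theta,\theta]=0$, contradicting the Remark immediately following the proposition; concretely, on $E=S\times\Rr$ with the trivial connection, $\theta^{\mathrm{v}}=0$ and $\Ff_\theta$ the pullback of a nondegenerate non-closed $2$-form on $S$ (so $\dim S\geq 4$), one has $\ad_{\Ff_\theta}=0$ because its coefficients are fiberwise constant, hence $\d_\theta=\d_{\Gamma_\theta}$ and $\d_\theta^2=0$, although $\theta$ is not Poisson. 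The reason is that, a priori, $[\theta,\cdot]$ has components in further bidegrees --- for instance one raising the vertical degree by two and lowering the form degree by one, coming from terms of the type $\Lie_{\hor(X)}\theta^{\mathrm{v}}$ --- and these vanish only by virtue of the structure equations of Theorem \ref{Vorobjev}. So the term-by-term verification you defer cannot be carried out from the Schouten conventions, the Koszul formula and \eqref{eq:curv:connection} alone; the structure equations must enter that computation itself. This does not make your argument circular, since $\theta$ is Poisson by hypothesis and Theorem \ref{Vorobjev} supplies the equations, but it means the real content of the proposition sits inside the step you describe as routine sign-chasing, and the closing appeal to the Jacobi identity does correspondingly little work. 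Once $\Psi$ is corrected and that verification is done with the structure equations in hand, your proof is complete and genuinely different in organization from the paper's; as written, it has a hole exactly where the paper's short direct argument does its work.
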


\begin{proof}
Note that the equation $\d_{\theta}^2=0$ simply says that the operators $\d_{\Gamma_\theta}$, $\ad_{\theta^{\textrm{v}}}$ and $\ad_{\Ff_\theta}$ all commute, with the exception of:
\begin{equation}
\label{eq:commute:dif}
[\d_{\Gamma_\theta},\d_{\Gamma_\theta}]= -[\ad_{\Ff_\theta}, \ad_{\theta^{\textrm{v}}}].
\end{equation}
The first 3 structure equations from Theorem \ref{Vorobjev} yield the commutation of these operators, while the last structure equation together with \eqref{eq:curv:connection}, show that \eqref{eq:commute:dif} holds.

Also, one checks easily that the decomposition
\[ TE= \textrm{Hor}_{\theta}\oplus \Ver \cong T^*S\oplus \Ver ,\]
induces an isomorphism (which depends on $\theta$):
\[ \X^{k}(E)\cong \oplus_{p+q= k} \Omega^{p, q}_{E},\]
and the Poisson differential on $\X(E)$ corresponds to the differential $\d_\theta$ on $\Omega_E$.
\end{proof}

\begin{rem}
The proposition states that $\theta$ Poisson implies that $\d_\theta^2=0$ on $\Omega_E$. However, the converse is not true. This is because condition (iv) in Theorem \ref{Vorobjev} implies that \eqref{eq:commute:dif} holds, but the converse is not true.
\end{rem}

It is convenient to picture the complex $(\Omega_E,\d_\theta)$, using the decomposition \eqref{eq:commute:dif}, as a diagram:
\[
\xymatrix@C=10pt{
&&&&\Omega^{0,0}_E\ar[dr]\ar[dl]&&&&&&&\\
&&&\Omega^{1,0}_E\ar[drrr]\ar[dr]\ar[dl]& &\Omega^{0,1}_E\ar[dr]\ar[dl]&&&\ar[drrr]^{\d^{-1,2}}\ar[dr]_{\d^{0,1}}\ar[dl]_{\d^{1,0}}&&&\\
&&\Omega^{2,0}_E\ar[drrr]\ar[dr]\ar[dl]&  &\Omega^{1,1}_E\ar[drrr]\ar[dr]\ar[dl]&  &\Omega^{0,2}_E\ar[dr]\ar[dl]&&&&&\\
&\Omega^{3,0}_E\ar[drrr]\ar[dr]\ar[dl]& &\Omega^{2,1}_E\ar[drrr]\ar[dr]\ar[dl]& &\Omega^{1,2}_E\ar[drrr]\ar[dr]\ar[dl]& &\Omega^{0,3}_E\ar[dr]\ar[dl]&&&&\\
&&&&&&&&&&&}
\]
Note also that, if $S$ is a symplectic leaf, then under the isomorphism $\X(E)\cong\Omega_E$ 
the restricted and relative complexes are mapped to the complexes:
\[
\X_{S}^{k}(E)\cong \Bigoplus_{p+q= k} \Omega^{q,p}_{S},\qquad
\X^{k}(E, S)\cong \Bigoplus_{\substack{p+q=k\\ q\geq 1}} \Omega^{q,p}_{S}.
\]
The differential of the restricted complex $\Omega_S$ is the restriction of the differential of the complex $\Omega_E$, so we also have a decomposition as in \eqref{eq:commute:dif} and a similar diagram:
\[
\xymatrix@C=10pt{
&&&&\Omega^{0,0}_S\ar[dr]&&&&&&&\\
&&&\Omega^{1,0}_S\ar[drrr]\ar[dr]\ar[dl]& &\Omega^{0,1}_S\ar[dr]&&&\ar[drrr]^{\d^{-1,2}}\ar[dr]_{\d^{0,1}}\ar[dl]_{\d^{1,0}}&&&\\
&&\Omega^{2,0}_S\ar[drrr]\ar[dr]\ar[dl]&  &\Omega^{1,1}_S\ar[drrr]\ar[dr]\ar[dl]&  &\Omega^{0,2}_S\ar[dr]&&&&&\\
&\Omega^{3,0}_S\ar[drrr]\ar[dr]\ar[dl]& &\Omega^{2,1}_S\ar[drrr]\ar[dr]\ar[dl]& &\Omega^{1,2}_S\ar[drrr]\ar[dr]\ar[dl]& &\Omega^{0,3}_S\ar[dr]&&&&\\
&&&&&&&\ar@{--}@<0ex>[uuuullll]&&&&&}
\]
The top diagonal row in this diagram is just the de Rham complex of $S$, which is a subcomplex of $\Omega_S$. The  quotient complex is $\overline{\Omega}_{S}$ and coincides with the part of the diagram below the dotted line; we denote by $\overline{\d}_{\theta, s}$ its associated operator.
Actually, the isomorphism between $\Omega_{S}$ and $\X_{S}(E)$ induces an identification between 
the short exact sequence defining the relative complex $\X(E, S)$ and the one associated to
the inclusion of $\Omega(S)$ inside $\Omega_{S}$:
\[ \xymatrix{
0\ar[r] & \Omega^\bullet(S)\ar[r] \ar[d]^-{\textrm{Id}}& \X_{S}^\bullet(E) \ar[r]\ar[d]^-{\sim}& \X^\bullet(E, S)\ar[r]\ar[d]^-{\sim}& 0.\\
0\ar[r] & \Omega^\bullet(S)\ar[r] & \Omega_{S} \ar[r]& \overline{\Omega}_{S}\ar[r]& 0
}.
\]
Hence, $\overline{\Omega}_{S}$ can be identified with the relative complex $\X(E,S)$.

\section{Restricting to sections}                  %
\label{sec:leaves:sections}                               %

In the previous section we have studied horizontally non-degenerate Poisson structures $\theta$
on the total space of a vector bundle $p: E\to S$ and explained that they correspond to
certain triples $(\theta^{\textrm{v}},\Gamma_\theta, \Ff_\theta)$ which interact inside a graded algebra $\Omega_E$. In this section, we place ourselves in the same framework and we look at the condition
that a section $s\in \Gamma(E)$ has to satisfy so that $\Graph(s)\subset E$ is a symplectic leaf of $\theta$.

The relevant algebraic operation for solving this question is \textbf{``the restriction operation''} from $\Omega_E$ to $\Omega_S$ induced by a section $s\in\Gamma(E)$. For functions, this is just the map: 
\[ s^*:C^{\infty}(E)\to C^{\infty}(S),\quad f\mapsto f\circ s. \]
Taking the tensor product with $\Omega_S$ we obtain the restriction map
\[ \Omega_E\rmap \Omega_S,\quad \omega\mapsto \omega|_s.\]
This notation is justified since, in local coordinates, this restriction map sends an element 
(\ref{general-u}) with coefficients $u_{I, J}(x, y)$ into a new element with coefficients 
$u_{I,J}(x,s(x))$. 

We also need the operation of restricting a connection $\Gamma$ on $E$ along a section $s$. This 
is a bit more subtle and will be further explained in the next section. For now, we take the following 
ad-hoc description: Given a connection $\Gamma$ on $E$ and a section $s\in \Gamma(E)$, the
\textbf{restriction of $\Gamma$ to $s$} is defined as the $E$-valued $1$-form on $S$, denoted $\Gamma|_{s}\in \Omega_{S}^{1, 1}$, given by
\[ \Gamma|_{s}(X_x):=\hor_{s(x)}(X_x)-(\d_x s)(X_x)\in\Ver_{s(x)}\cong E_x.\]
In local coordinates, if $\Gamma_i^a(x,y)$ are the coefficients (\ref{general-c}) of $\Gamma$ , then 
\begin{equation}
\label{Gamma-to-s} 
\Gamma|_{s}=\left(\Gamma_{i}^{a}(x, s(x))-\frac{\partial s^a}{\partial x^i}\right) \d x^i\otimes\partial_{y^a}.
\end{equation}

Assume now that one is given a horizontally nondegenerate Poisson structure $\theta$ on $E$, with associated
triple $(\theta^{\textrm{v}},\Gamma_\theta, \Ff_\theta)$. For each section $s\in \Gamma(E)$ we define
\[ c(\theta, s):=\theta^{\textrm{v}}|_{s} + \Gamma_{\theta}|_{s} \in \Omega_{S}^{2,0}\oplus\Omega_{S}^{1,1}.\]

\begin{prop}
\label{criteria-leaf} 
Given a horizontally non-degenerate Poisson structure $\theta$ on the vector bundle $p:E\to S$, a section $s\in \Gamma(E)$ defines a symplectic leaf of $\theta$ if and only if $c(\theta,s)=0$.
\end{prop}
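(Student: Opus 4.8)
The plan is to work entirely in local coordinates $(x^i, y^a)$ adapted to the bundle $p:E\to S$, with a local frame $(e_a)$ inducing fiberwise coordinates. Given a section $s\in\Gamma(E)$, its graph $\Graph(s)\subset E$ is parametrized by $x\mapsto(x,s(x))$, so its tangent space at $s(x)$ is spanned by the vectors $\partial/\partial x^i + (\partial s^a/\partial x^i)\,\partial/\partial y^a$. The statement that $\Graph(s)$ is a symplectic leaf of $\theta$ is equivalent to two conditions: first, that $\Graph(s)$ is an \emph{integral submanifold} of the characteristic distribution $\im\theta^\#$, i.e., $T_{s(x)}\Graph(s) = \theta^\#(T_{s(x)}^*E)$ for all $x$; and second — given that the leaf is then automatically symplectic with the induced form — nothing more. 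So the whole proof reduces to showing that $c(\theta,s)=0$ characterizes $\Graph(s)$ being tangent to the symplectic foliation.

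First I would unwind what $c(\theta,s)=0$ means componentwise. Recall $TE = \Hor_\theta\oplus\Ver$ where $\Hor_\theta = \Ver^{\perp,\theta}$, and that $\theta$ decomposes into $\theta^{\mathrm v}$ (the vertical Poisson bivector, i.e.\ the part of $\theta$ living in $\wedge^2\Ver$) and the nondegenerate pairing between $\Hor_\theta$ and itself encoded by $\Ff_\theta$. The restriction $\theta^{\mathrm v}|_s \in \Omega_S^{2,0}$ vanishing says that at each point $s(x)$ the vertical bivector $\theta^{\mathrm v}$ is zero — equivalently, the characteristic distribution at $s(x)$ contains no vertical directions, so $\im\theta^\#_{s(x)}$ meets $\Ver_{s(x)}$ trivially and hence (by the cosymplectic/horizontally-nondegenerate setup, where $\im\theta^\#$ has dimension $\dim S + \rank\theta^{\mathrm v}$) equals exactly $\Hor_{\theta,s(x)}$. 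The restriction $\Gamma_\theta|_s\in\Omega_S^{1,1}$ vanishing says, by formula \eqref{Gamma-to-s}, that $\hor_{s(x)}(X) = (\d_x s)(X)$ for all $X\in T_xS$, i.e.\ $T_{s(x)}\Graph(s) = \Hor_{\theta,s(x)}$. Combining: $c(\theta,s)=0$ is precisely the statement $T_{s(x)}\Graph(s) = \Hor_{\theta,s(x)} = \im\theta^\#_{s(x)}$ for every $x$, which is exactly the condition that $\Graph(s)$ is an integral submanifold of the symplectic foliation.

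It remains to argue both directions carefully. For the ``if'' direction: if $c(\theta,s)=0$ then $\Graph(s)$ is an embedded submanifold everywhere tangent to the (singular, integrable) characteristic distribution of $\theta$, and an embedded connected integral submanifold of maximal dimension contained in a leaf \emph{is} (an open subset of) that leaf; since $\Graph(s)\cong S$ is compact — actually one only needs it closed in $E$ — and matches the leaf dimension pointwise, $\Graph(s)$ is a union of symplectic leaves, hence (being connected, or working leaf-by-leaf) a single symplectic leaf, with the restriction of the leafwise symplectic form making it symplectic. For the ``only if'' direction: if $\Graph(s)$ is a symplectic leaf, then pointwise $T_{s(x)}\Graph(s) = \im\theta^\#_{s(x)}$, which forces $\im\theta^\#_{s(x)}\cap\Ver_{s(x)} = 0$ (as a graph over $TS$ has no vertical part), hence $\theta^{\mathrm v}|_s = 0$, and then $\im\theta^\#_{s(x)} = \Hor_{\theta,s(x)}$ forces $T_{s(x)}\Graph(s) = \Hor_{\theta,s(x)}$, i.e.\ $\Gamma_\theta|_s = 0$.

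The main obstacle — really the only subtle point — is the dimension count in the middle step: one must know that for a horizontally nondegenerate $\theta$ the rank of $\theta^\#$ at $s(x)$ equals $\dim S + \rank_{s(x)}\theta^{\mathrm v}$, so that $\theta^{\mathrm v}|_s = 0$ together with $\Hor_\theta$ being isotropic-complementary pins down $\im\theta^\#$ exactly as $\Hor_\theta$. This follows from the decomposition $TE = \Hor_\theta\oplus\Ver$ being $\theta$-orthogonal: $\theta$ restricted to $\Ver^*$ is (co)$\theta^{\mathrm v}$ and $\theta$ restricted to $\Hor_\theta^*$ is nondegenerate of rank $\dim S$, and there is no cross term, so $\rank\theta^\# = \dim S + \rank\theta^{\mathrm v}$. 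I would spell this linear-algebra fact out once at the start and then the rest is the bookkeeping above. A secondary, genuinely minor point is checking that the induced leafwise symplectic form restricts to a symplectic (not merely closed) form on $\Graph(s)$ — but this is automatic since $\Graph(s)$ equals a leaf, on which $\omega_S$ is symplectic by construction.
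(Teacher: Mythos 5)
Your proposal is correct and follows essentially the same route as the paper: both reduce the statement to the pointwise linear algebra of the splitting $T E=\Hor_\theta\oplus\Ver$, identifying the vanishing of $\theta^{\mathrm v}|_s$ with $\im\theta^\sharp_{s(x)}=\Hor_{\theta,s(x)}$ (the paper phrases this as $\Ker\theta^\sharp_{s(x)}=(\Hor_{s(x)})^0$, equivalent to your rank count) and the vanishing of $\Gamma_\theta|_s$ with $T_{s(x)}\Graph(s)=\Hor_{\theta,s(x)}$. Your extra care about the ``integral submanifold of full leaf dimension, closed in $E$, is a leaf'' step only makes explicit what the paper leaves as an observation.
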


\begin{proof}
Recall that the horizontal non-degeneracy of $\theta$ at $s(x)$ means that:
\[ T_{s(x)}E=\Ver_{s(x)} \oplus \Hor_{s(x)}=\Ver_{s(x)} \oplus\, \theta^\sharp_{s(x)}(\Ver_{s(x)})^0.\]
Now, given a section $s\in \Gamma(E)$, observe that $\Graph(s)$ is a symplectic leaf of $\theta$ if and only if for every $x\in S$ we have:
\[ \theta^\sharp_{s(x)}(T^*_{s(x)}E)=T_{s(x)}(\Graph(s)). \]
Hence, we see that this holds iff we have:
\[ \Ker\theta^\sharp_{s(x)}=(\Hor_{s(x)})^0,\quad \theta^\sharp_{s(x)}(\Ver_{s(x)})^0=T_{s(x)}(\Graph(s)).\]
The first condition says that $\theta^{\textrm{v}}|_{s}=0$, while the second condition says that $\Gamma_{\theta}|_{s}=0$. So, together, they are are equivalent to $c(\theta,s)=0$.
\end{proof}

\section{Linearizing along sections}                      %
\label{sec:linear:approx}                               %

As in the previous sections, our scenario is a vector bundle $p: E\to S$, where
we study horizontally non-degenerate Poisson structures $\theta\in\X^2(E)$. In the previous two sections, 
we have interpreted such Poisson structures as geometric triples $(\theta^{\textrm{v}},\Gamma_\theta,\Ff_\theta)$ and we have discussed their ``restrictions'' (read: ``$0$-th order approximations'') along sections $s\in\Gamma(E)$.
We will now study higher order approximations.

\subsection{Linearization along sections}
The notion of linearization along sections of $E$ is more or less obvious in local coordinates. However,
for our purpose it is much more convenient to have a global description, which is independent of such choices.
In order to formulate this, we first introduce an algebraic operation which governs this process. 

The dilatation and translation operators combine so that, for each $t\in \mathbb{R}$ and $s\in \Gamma(E)$, 
one has an affine bundle map
\[ a_{t}^{s}: E\to E,\quad e\mapsto te+ s(p(e)).\]
This induces a map on $\Omega_E$ which we still denote by the same letter:
\[ a_{t}^{s}: \Omega_{E}\to  \Omega_{E}.\]
If we think of $\Omega_E$ as the tensor product $\Omega_S\otimes C^{\infty}(E)$,
then $a_{t}^{s}$ only acts on the second component. The operation we are interested in
is obtained by rescaling: for $t\not=0$ we set
\begin{equation}
\label{eq:rescale}
\phi_{t}^{s}:= \frac{1}{t} a_{t}^{s}: \Omega_E\to \Omega_E.
\end{equation}
The reason for the rescaling will be clear in the sequel. In local coordinates $(x,y)$, one sees that the effect of the operation $\phi_{t}^{s}$ is very simple: 
\[ 
u=u_{I,J}(x,y)~\d x^I\otimes\partial y_J\longmapsto 
\phi_{t}^{s}(u)=\frac{1}{t}u_{I,J}(x,ty+s(x))~\d x^I\otimes\partial y_J.
\]
The following properties of $\phi_{t}^{s}$ should now be clear:
\begin{itemize}
\item $\phi_{t}^{s}$ \textbf{preserves the bracket} $[\cdot, \cdot]$. In particular,
 for each $u\in \Omega_E$,
 \[ \phi_{t}^{s}(\ad_u)=\ad_{\phi_{t}^{s}(u)},\]
 where, for any operator $D$ acting on $\Omega_E$, we set 
 \[ \phi_{t}^{s}(D):= \phi_{t}^{s}\circ D\circ (\phi_{t}^{s})^{-1}.\]
\item $\phi_{t}^{s}$ \textbf{acts on connections}: for any connection $\Gamma$ on $E$, there is a unique
 connection $\phi_{t}^{s}\Gamma$ satisfying:
 \[ \phi_{t}^{s}(\d_{\Gamma})=\d_{\phi_{t}^{s}\Gamma}.\]
 In local coordinates, a connection with local coefficients $\Gamma_{i}^{a}(x, y)$ is transformed to a new 
 connection with local coefficients:
 \[ 
 (\phi_{t}^{s}\Gamma)_{i}^{a}(x,y)= 
 \frac{1}{t}\left(\Gamma_{i}^{a}(x,ty+s(x)-\frac{\partial s^a}{\partial x_i}(x))\right).
 \] 
\end{itemize}

Note also that, using $\phi_t:=\phi_{t}^{0}$, the subspace $\Omega_S$ of $\Omega_E$ can be characterized as:
\begin{equation}
\label{OmegaS} 
\Omega_S=\{u\in \Omega_E: \phi_t(u)=\frac{1}{t}u,~\forall t\not=0\},
\end{equation}
while the subspace $\Omega_{E,\text{lin}}$ can characterized as:
\begin{equation}
\label{OmegaElin} 
\Omega_{E,\text{lin}}=\{u\in \Omega_E: \phi_t(u)=u,~\forall t\not=0\}.
\end{equation}
On the other hand, for an arbitrary $u\in\Omega_E$, one has
\[ \lim_{t\to 0}~t\,\phi_{t}^{s}(u)=u|_{s}.\]
This is an equality of sections of a bundle over $S$, so this limit is fiberwise and
so there is no issues about topologies. We can interpret this as saying that the formal development of $\phi_{t}^{s}(u)$ in powers of $t$ starts with the term $t^{-1}$ with coefficient $u|_s$. 

Let us consider the next term in the formal development of $\phi_{t}^{s}(u)$ in powers of $t$:
\begin{equation}
\label{develop-u} 
\phi_{t}^{s}(u)\sim \frac{1}{t} u|_{s}+ \lins u + o(t).
\end{equation}
More precisely, we set:

\begin{defn} 
For $s\in \Gamma(E)$ and $u\in \Omega_E$, the element $\lins u\in\Omega_{E,\lin}$ defined by:
\[ \lins u= \lim_{t\to 0}~\frac{t\,\phi_{t}^{s}(u)-u|_{s}}{t}, \]
is called the \textbf{linearization of $u$ along $s$}.
\end{defn}

In order to see that $\lins u$ does lie in $\Omega_{E,\lin}$ one can proceed formally using \eqref{OmegaElin} or just use local coordinates: an element $u\in \Omega_E$, with local coefficients $u_{I,J}(x,y)$, has a linearization $\lins u$ which has local coefficients:
\[ (\lins u)_{I, J}= \frac{\partial  u_{I,J}}{\partial y^b}(x,s(x))y^{b}.\]

Finally, a similar discussion applies to connections, so we describe this briefly. First of all, the restriction $\Gamma|_{s}$ introduced in the previous section is the first coefficient in the
formal development of $\phi_{t}^{s}\Gamma$:
\[ \phi_{t}^{s}\Gamma\sim \frac{1}{t} \Gamma|_{s}+ \lins\Gamma + o(t).\]
This development should be understood at the level of operators acting on $\Omega_E$:
\[ \d_{\phi_{t}^{s}\Gamma}\sim \frac{1}{t} \ad_{\Gamma|_{s}}+ \d_{\lins\Gamma} + o(t) .\]
Therefore, we set:

\begin{defn} 
Given a section $s\in \Gamma(E)$ and a connection $\Gamma$ on $E$, the linear connection
$\lins\Gamma$ defined by:
\[ \d_{\lins\Gamma}= \lim_{t\to 0}~\frac{t\,\d_{\phi_{t}^{s}\Gamma}-\ad_{\Gamma|_{s}}}{t} \]
is called the \textbf{linearization of $\Gamma$ along $s$}.
\end{defn}

In order to see that $\lins\Gamma$ is indeed a linear connection, we can use local coordinates: 
if $\Gamma_{i}^{a}(x, y)$ are the local coefficients of $\Gamma$, then the local coefficients
of the linearization $\lins \Gamma$ are:
\[  (\lins\Gamma)^{i}_{a}=\frac{\partial  \Gamma^{i}_{a}}{\partial y^b} (x,s(x))y^{b} .\]

\begin{rem}
Note that since both $\lins u$ and $\lins \Gamma$ are linear, operators of type $\ad_{\lins u}$ or $\d_{\lins \Gamma}$ map $\Omega_S$ into itself.
\end{rem}

\subsection{Linearizing Poisson structures around sections}
Let $\theta$ be a horizontally non-degenerate Poisson structure, with associated geometric triple
$(\theta^{\textrm{v}},\Gamma_\theta, \Ff_\theta)$. For any section $s\in \Gamma(E)$ we define the 
\textbf{linearization of $\d_{\theta}$ along $s$} as the operator $\d_{\theta,s}:\Omega_S\to \Omega_S$ 
defined by
\[ \d_{\theta,s}:= \ad_{\lins \theta^{\textrm{v}}}+ \d_{\lins \Gamma_\theta}+ \ad_{\lins \Ff_\theta}.\]
This is an operator which raises the total degree by $1$.

The linearization along $s$ of the structures equations for the geometric triple 
$(\theta^{\textrm{v}},\Gamma_\theta, \Ff_\theta)$ given by Theorem \ref{Vorobjev} takes the form:

\begin{prop}
\label{linearization}
Let $\theta|_s\in\Omega_S^2$ be the element of total degree $2$ defined by:
\[ \theta|_s:= \theta^{\textrm{v}}|_{s}+ \Gamma_\theta|_{s}+ \Ff_\theta|_{s}.\]
Then:
\begin{equation}
\label{linear:structure:eqs}
\d_{\theta,s}(\theta|_{s})=0.
\end{equation}
\end{prop}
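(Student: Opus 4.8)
The plan is to derive the linearized structure equations from Vorobjev's equations (Theorem~\ref{Vorobjev}) by applying the rescaling operator $\phi_t^s$, exploiting the fact that $\phi_t^s$ preserves the bracket $[\cdot,\cdot]$ and hence the relation $\d_\theta^2 = 0$, and then extracting the appropriate coefficient in the expansion in powers of $t$. More precisely, since $\phi_t^s$ preserves the bracket and acts on connections, we have $\phi_t^s(\d_\theta) = \d_{\phi_t^s\theta}$, where $\phi_t^s\theta$ is the horizontally non-degenerate Poisson structure with triple $(\phi_t^s\theta^{\textrm{v}}, \phi_t^s\Gamma_\theta, \phi_t^s\Ff_\theta)$ (this uses that the assignment $\theta \mapsto$ triple is natural under affine bundle maps, since $a_t^s$ is such a map). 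In particular $\phi_t^s\theta$ is still Poisson, so $\d_{\phi_t^s\theta}^2 = 0$ for all $t \neq 0$.

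First I would record the expansions, for $t \to 0$:
\[
\phi_t^s(\d_\theta) = \d_{\phi_t^s\theta} \sim \frac{1}{t}\,\ad_{\theta|_s} + \d_{\theta,s} + o(t),
\]
which follows termwise from the definitions of $u|_s$, $\lins u$, $\Gamma|_s$ and $\lins\Gamma$ together with the defining formula $\d_{\theta,s} := \ad_{\lins\theta^{\textrm{v}}} + \d_{\lins\Gamma_\theta} + \ad_{\lins\Ff_\theta}$; here $\ad_{\theta|_s} = \ad_{\theta^{\textrm{v}}|_s} + \ad_{\Gamma_\theta|_s} + \ad_{\Ff_\theta|_s}$, with the middle term interpreted as in the development $\d_{\phi_t^s\Gamma} \sim \frac1t\ad_{\Gamma|_s} + \d_{\lins\Gamma} + o(t)$. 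Now square this operator and expand in powers of $t$. The $t^{-2}$ term is $\ad_{\theta|_s}^2 = \frac12\ad_{[\theta|_s,\theta|_s]}$ (graded Jacobi), and the $t^{-1}$ term is $\ad_{\theta|_s}\circ\d_{\theta,s} + \d_{\theta,s}\circ\ad_{\theta|_s} = \ad_{\d_{\theta,s}(\theta|_s)}$ after again using the graded Jacobi identity and the fact that $\d_{\theta,s}$ is a derivation of the bracket on $\Omega_S$. Since $\d_{\phi_t^s\theta}^2 = 0$ identically in $t$, every coefficient vanishes; in particular $\ad_{\d_{\theta,s}(\theta|_s)} = 0$ as an operator on $\Omega_S$.

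The final step is to upgrade the operator identity $\ad_{\d_{\theta,s}(\theta|_s)} = 0$ to the equality $\d_{\theta,s}(\theta|_s) = 0$ in $\Omega_S^3$. Here one notes that $\d_{\theta,s}(\theta|_s)$ naturally lands in $\Omega_S$ (each of the three constituent operators maps $\Omega_S$ to $\Omega_S$, by the Remark following the definition of linearization), and it decomposes into its homogeneous components in $\Omega_S^{3,0}\oplus\Omega_S^{2,1}\oplus\Omega_S^{1,2}\oplus\Omega_S^{0,3}$ — these being precisely the linearizations of the four structure equations (i)--(iv) of Theorem~\ref{Vorobjev}. One checks that $\ad_u = 0$ on $\Omega_S$ forces $u = 0$ for $u$ in the relevant bidegrees: the representation of $\Omega_{E,\lin}$ on $\Omega_S$ is faithful because, as observed in Remark~\ref{purely-algebraic}, it is the fiberwise algebraic action, and pairing $\ad_u$ against functions and $1$-forms on $S$ (in the appropriate $\Omega_S^{q,p}$-slots) recovers $u$ itself. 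I expect the main obstacle to be purely bookkeeping: keeping track of which homogeneous component of the squared operator produces which linearized structure equation, and verifying the faithfulness of the $\ad$-action componentwise (equivalently, checking that no information is lost in passing from $\d_{\theta,s}(\theta|_s)$ to $\ad_{\d_{\theta,s}(\theta|_s)}$) — but conceptually everything is forced by naturality of the triple construction under $\phi_t^s$ and the graded Jacobi identity.
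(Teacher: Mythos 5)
Your reduction to an operator identity is essentially sound: since $\phi_t^s$ preserves the bracket and intertwines $\d_{\Gamma_\theta}$ with $\d_{\phi_t^s\Gamma_\theta}$, the operator with the transformed triple is the conjugate $\phi_t^s\circ\d_\theta\circ(\phi_t^s)^{-1}$, so it squares to zero (you do not even need to argue that $\phi_t^s\theta$ is an honest Poisson structure, which is good because the naturality claim "triple of the pushforward = $\phi_t^s$ of the triple" is not literally true — the extra $\frac1t$ in $\phi_t^s$ scales the bidegrees non-uniformly relative to the geometric pushforward, and corresponds to multiplying the bivector by $t$). Comparing coefficients, the $t^{-1}$ term of the square is indeed $\ad_{\theta|_s}\d_{\theta,s}+\d_{\theta,s}\ad_{\theta|_s}=\ad_{\d_{\theta,s}(\theta|_s)}$. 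The genuine gap is in the final step: you cannot recover $\d_{\theta,s}(\theta|_s)$ from its adjoint action. The element $\d_{\theta,s}(\theta|_s)$ lies in $\Omega_S$, not in $\Omega_{E,\text{lin}}$, so Remark \ref{purely-algebraic} does not give you faithfulness; on the contrary, that same remark says the bracket is \emph{trivial} on $\Omega_S$, so $\ad_u$ restricted to $\Omega_S$ vanishes for \emph{every} $u\in\Omega_S$, and the identity "$\ad_{\d_{\theta,s}(\theta|_s)}=0$ as an operator on $\Omega_S$" is vacuous. Enlarging the module to all of $\Omega_E$ does not close the gap either: bracketing with an element of $\Omega_S$ only sees vertical derivatives of its coefficients (which vanish) and its vertical multivector part, so by testing against fiberwise-linear functions you can at best recover the components of $\d_{\theta,s}(\theta|_s)$ in $\Omega^{3,0}_S\oplus\Omega^{2,1}_S\oplus\Omega^{1,2}_S$, while the component in $\Omega^{0,3}_S$, namely $[\Gamma_\theta|_s,\lins\Ff_\theta]+\d_{\lins\Gamma_\theta}(\Ff_\theta|_s)$ (the linearization of the structure equation $\d_{\Gamma_\theta}\Ff_\theta=0$), acts trivially on the whole of $\Omega_E$ and is therefore invisible to your argument.

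The repair is to drop the detour through the squared operator, which is exactly what the paper does: apply $\phi_t^s$ directly to each of the four structure equations of Theorem \ref{Vorobjev} (legitimate, since $\phi_t^s$ preserves $[\cdot,\cdot]$ and sends $\d_{\Gamma_\theta}$ to $\d_{\phi_t^s\Gamma_\theta}$), expand using $\phi_t^s u\sim\frac1t\,u|_s+\lins u+o(t)$ and $\d_{\phi_t^s\Gamma_\theta}\sim\frac1t\,\ad_{\Gamma_\theta|_s}+\d_{\lins\Gamma_\theta}+o(t)$, and read off the coefficient of $t^{-1}$ in each equation. This produces each bidegree component of \eqref{linear:structure:eqs} directly at the level of elements of $\Omega_S$, with no faithfulness question ever arising, and in particular it does capture the $\Omega^{0,3}_S$ component that your argument loses.
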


\begin{proof}
Note that equation \eqref{linear:structure:eqs} splits into the following components:
\begin{enumerate}[(i)]
\item $[\theta^{\text{\rm v}}|_{s}, \lins \theta^{\text{\rm v}}]= 0$.
\item $[\Gamma_\theta|_{s}, \lins \theta^{\text{\rm v}}]+ \d_{\lins \Gamma_\theta}(\theta^{\text{\rm v}}|_{s})= 0$.
\item $[\Gamma_\theta|_{s}, \lins \mathbb{F}]+ \d_{\lins \Gamma_\theta}(\Ff_\theta|_{s})= 0$.
\item $\d_{\lins \Gamma_\theta}(\Gamma|_{s})+ [\lins \Ff_\theta, \theta^{\text{\rm v}}|_{s}= 0$.
\end{enumerate}
The proof is straightforward: one just applies the operators $\phi_{t}^{s}$ to the structure equations given by Theorem \ref{Vorobjev} and takes the limit as $t\to 0$. Equivalently, we can expand formally in powers of $t$ 
and compare the coefficients of $t^{-1}$. 

For instance, starting from $[\theta^{\text{\rm v}}, \theta^{\text{\rm v}}]= 0$ we obtain
\[ [\frac{1}{t} \theta^{\text{\rm v}}|_{s}+ \lins \theta^{\text{\rm v}}+ \ldots, \frac{1}{t} \theta^{\text{\rm v}}|_{s}+ \lins \theta^{\text{\rm v}}+ \ldots]= 0,\]
which is equivalent to:
\[ \frac{1}{t^2} [\theta^{\text{\rm v}}|_{s}, \theta^{\text{\rm v}}|_{s}]+ \frac{1}{t} [\theta^{\text{\rm v}}|_{s}, \lins \theta^{\text{\rm v}}]+ \ldots = 0.\]
The coefficient of $t^{-1}$ gives $[\theta^{\text{\rm v}}|_{s}, \lins \theta^{\text{\rm v}}]= 0$, i.e., the first equation in the list. The other equations follow similarly. 
\end{proof}

\subsection{Linearizing Poisson structures along a symplectic leaf} 
Let $\pi$ be a horizontally non-degenerate Poisson structure on $E$ which has the zero section as a symplectic leaf. As usual, we denote by $(\pi^{\textrm{v}},\Gamma_\pi,\Ff_\pi)$ the associated geometric triple. 

The operation of restriction to the zero section only gives us the symplectic form $\omega_S$ on $S$:
\[ \pi|_S:=\pi^{\textrm{v}}|_{0}+\Gamma_\pi|_{0}+\Ff_\pi|_{0}=\omega_{S}.\]

More interesting information lies in the next order approximation, which gives us elements:
\begin{align*}
\pi^{\textrm{v}}_{\lin}&:= \d_0\pi^{\textrm{v}}\in \Gamma(\wedge^2 E\otimes E^*), \\
\sigma&:= \d_0 \Ff_\pi \in \Omega^2(S;E^*),\\
\Gamma_{\pi,\textrm{lin}}&:= \d_0\Gamma_\pi\text{ (a linear connection on $E$)}.
\end{align*}
The linearized structure equations \eqref{linear:structure:eqs}, given in Proposition \ref{linearization}, give us no information about this linearized data: those equations arose by comparing the coefficients of $t^{-1}$ in the formal expansion resulting from the structure equations. In our case, however, the next coefficients will only depend on the linearized data, and we obtain the following result of Vorobjev (\cite{Vor}):

\begin{prop}
Let $\pi$ be a horizontally non-degenerate Poisson structure on a vector bundle $p:E\to S$ which has the zero section as a symplectic leaf. The geometric triple \emph{$(\pi^{\text{v}}_{\lin}, \Gamma_{\pi,\lin}, \omega_{S}+ \sigma)$} satisfies the structure equations.
\end{prop}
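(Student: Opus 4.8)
The plan is to obtain the four structure equations for the triple $(\pi^{\textrm{v}}_{\lin}, \Gamma_{\pi,\lin}, \omega_{S}+ \sigma)$ by applying the rescaling operators $\phi_{t}=\phi_{t}^{0}$ to the structure equations of Theorem \ref{Vorobjev} for $\pi$ and reading off the coefficient of $t^{0}$ in the resulting expansions in powers of $t$ --- exactly as in the proof of Proposition \ref{linearization}, except that there one used the coefficient of $t^{-1}$. The reason only the linearized data appear at order $t^{0}$ is that, since the zero section is a symplectic leaf of $\pi$, Proposition \ref{criteria-leaf} forces $c(\pi,0)=\pi^{\textrm{v}}|_{0}+\Gamma_\pi|_{0}=0$, so that $\pi^{\textrm{v}}|_{0}=0$ and $\Gamma_\pi|_{0}=0$, while $\Ff_\pi|_{0}=\omega_{S}$. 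Hence the formal developments of the three ingredients read
\[ \phi_{t}(\pi^{\textrm{v}})=\pi^{\textrm{v}}_{\lin}+O(t),\qquad \d_{\phi_{t}\Gamma_\pi}=\d_{\Gamma_{\pi,\lin}}+O(t),\qquad \phi_{t}(\Ff_\pi)=\frac{1}{t}\,\omega_{S}+\sigma+O(t). \]

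I would then apply $\phi_{t}$ to each of the equations (i)--(iv) of Theorem \ref{Vorobjev}, which is legitimate for every $t\neq 0$ because $\phi_{t}$ preserves the bracket $[\cdot,\cdot]$, conjugates $\d_{\Gamma_\pi}$ into $\d_{\phi_{t}\Gamma_\pi}$, and --- via \eqref{eq:curv:connection} --- conjugates the curvature $\Omega_{\Gamma_\pi}$ into $\Omega_{\phi_{t}\Gamma_\pi}=\phi_{t}(\Omega_{\Gamma_\pi})$. Substituting the developments above and comparing coefficients of $t^{0}$ then yields the four equations sought, provided two elementary facts are used to dispose of the negative powers of $t$: first, $\omega_{S}\in\Omega^{0,2}_{S}$ is \emph{central} for $[\cdot,\cdot]$ (the Schouten bracket of a base function with a vertical multivector field vanishes), so every term of the form $[\tfrac1t\omega_{S},\,\cdot\,]$ disappears; second, $\d_{\Gamma}\omega_{S}=\d\omega_{S}$ for every connection $\Gamma$ (the horizontal-lift corrections act trivially on the $y$-independent form $\omega_{S}$), and likewise all the higher-order operators occurring in the expansion of $\d_{\phi_{t}\Gamma_\pi}$ annihilate $\Omega^{\bullet}(S)$.

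Carrying this out equation by equation: (i) $[\pi^{\textrm{v}},\pi^{\textrm{v}}]=0$ gives at order $t^{0}$ the identity $[\pi^{\textrm{v}}_{\lin},\pi^{\textrm{v}}_{\lin}]=0$; (ii) $\d_{\Gamma_\pi}\pi^{\textrm{v}}=0$ gives $\d_{\Gamma_{\pi,\lin}}\pi^{\textrm{v}}_{\lin}=0$; (iii) $\d_{\Gamma_\pi}\Ff_\pi=0$ gives at order $t^{-1}$ the relation $\d_{\Gamma_{\pi,\lin}}\omega_{S}=\d\omega_{S}=0$ (automatic, $\omega_{S}$ being closed) and at order $t^{0}$ the relation $\d_{\Gamma_{\pi,\lin}}\sigma=0$, hence $\d_{\Gamma_{\pi,\lin}}(\omega_{S}+\sigma)=0$; and (iv) $\Omega_{\Gamma_\pi}=[\Ff_\pi,\pi^{\textrm{v}}]$ has left-hand side $\Omega_{\phi_{t}\Gamma_\pi}=\Omega_{\Gamma_{\pi,\lin}}+O(t)$ (no $t^{-1}$ term, since $\Gamma_\pi|_{0}=0$) and right-hand side $[\tfrac1t\omega_{S}+\sigma+O(t),\,\pi^{\textrm{v}}_{\lin}+O(t)]=[\sigma,\pi^{\textrm{v}}_{\lin}]+O(t)$ (the $t^{-1}$ term $[\omega_{S},\pi^{\textrm{v}}_{\lin}]$ vanishing by centrality), so the coefficient of $t^{0}$ reads $\Omega_{\Gamma_{\pi,\lin}}=[\sigma,\pi^{\textrm{v}}_{\lin}]=[\omega_{S}+\sigma,\pi^{\textrm{v}}_{\lin}]$. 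These are precisely equations (i)--(iv) of Theorem \ref{Vorobjev} for the triple $(\pi^{\textrm{v}}_{\lin},\Gamma_{\pi,\lin},\omega_{S}+\sigma)$.

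The only genuinely delicate step is equation (iv): one must verify that the curvature transforms naturally under $\phi_{t}$ and correctly identify the coefficient of $t^{0}$ in the expansion of $\Omega_{\phi_{t}\Gamma_\pi}$ as $\Omega_{\Gamma_{\pi,\lin}}$ (which follows by squaring the operator expansion $\d_{\phi_{t}\Gamma_\pi}=\d_{\Gamma_{\pi,\lin}}+O(t)$ and using $\d_{\Gamma}^{2}=\ad_{\Omega_{\Gamma}}$), and one must invoke the centrality of $\omega_{S}$ in exactly the right place to see that, although $\Ff_\pi|_{0}=\omega_{S}\neq 0$, no non-linearized (second-order) data survives at order $t^{0}$ --- this being the structural reason why the statement concerns the \emph{linearized} triple. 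Everything else is a routine expansion once the developments above and the two facts (centrality of $\omega_{S}$, triviality of $\d_{\Gamma}$ on $\Omega^{\bullet}(S)$) are in hand.
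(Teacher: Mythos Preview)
Your proof is correct and follows precisely the approach indicated in the paper: the paragraph preceding the proposition explains that the linearized structure equations of Proposition \ref{linearization} arose from the coefficient of $t^{-1}$, while here ``the next coefficients will only depend on the linearized data,'' i.e., one reads off the coefficient of $t^{0}$ --- exactly as you do. The paper does not spell out the details (attributing the result to Vorobjev), but your expansion-and-match argument, together with the two key observations that $\pi^{\textrm{v}}|_{0}=\Gamma_\pi|_{0}=0$ (so the $t^{-1}$ terms of $\phi_t(\pi^{\textrm{v}})$ and $\d_{\phi_t\Gamma_\pi}$ vanish) and that $\omega_{S}\in\Omega^{0,2}_{S}$ is central (so the surviving $t^{-1}$ term of $\phi_t(\Ff_\pi)$ contributes nothing to brackets), is exactly what is needed to make the hint into a proof.
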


Note that, in this case, $\d_{\pi, 0}$ squares to zero and $(\Omega_S,\d_{\pi, 0})$ is just the Poisson cohomology complex of $\pi$ restricted to $S$.

The proposition shows that, after restricting to an open neighborhood of the zero section
(where $\omega_{S}+ \sigma$ stays non-degenerate), the triple $(\pi^{\text{v}}_{\lin}, \Gamma_{\pi,\lin}, \omega_{S}+ \sigma)$ defines a new Poisson structure, which we will denote by $\jet^1_S\pi$
and called the \textbf{first jet approximation to $\pi$ along $S$}. It follows immediately that $\jet^1_S\pi$ has the following properties:
\begin{enumerate}[(i)]
\item The zero section $S\hookrightarrow \nu(S)$ is a symplectic leaf of $\jet^1_S\pi$;
\item The normal spaces $\nu(S)_x$, with their linear Poisson structures (which coincide with $\pi^{\text{v}}_{\lin}$) are cosymplectic submanifolds of $(\nu(S),\jet^1_S\pi)$;
\item If $S'$ is the symplectic leaf through $u\in\nu(S)$ then:
 \[ T_uS'=\Hor_u\oplus T_u \O_u,\]
where $\Hor_u$ is the horizontal space of $\Gamma_{\pi,\lin}$ and 
$\O_u\subset \nu(S)_{p(u)}$ is the coadjoint orbit through $u$.
\end{enumerate}
This leads to the following corollary, which will be useful later:

\begin{cor}
\label{cor:leaves}
If $S'$ is a symplectic leaf $j^1_S\pi$, one has:
\begin{enumerate}[(i)]
\item $S'$ projects diffeomorphically to $S$ iff it is the graph of a flat section \emph{$s\in\Gamma_\text{flat}(\nu_{S}^{0})$}, where  $\nu_{S}^{0}\subset\nu_{S}$ is the subbundle consisting of zeros of $\pi^{\text{v}}_{\lin}$. In particular the space of all such leaves coincides with $H^1_\pi(M,S)$.
\item $S'$ projects diffeomorphically to $S$ and $\omega_{S'}$ is isotopic to $\omega_S$ iff $S'$ is the graph of \emph{$s\in\Gamma_\text{flat}(\nu_{S}^{0})$} and there exists $\beta\in\Omega^1(S)$ such that:
\[ \d\beta=-\sigma|_S. \]
In particular, the space of all such leaves coincides with the image of the map $H^1_{\pi,S}(M)\to H^1_\pi(M,S)$.
\end{enumerate}
\end{cor}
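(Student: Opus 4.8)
The plan is to translate each geometric condition on a symplectic leaf $S'$ of $j^1_S\pi$ into an algebraic condition using the linearized triple $(\pi^{\text v}_{\lin},\Gamma_{\pi,\lin},\omega_S+\sigma)$, and then to recognize the resulting conditions as the defining equations of $H^1_\pi(M,S)$ and of the image of $\Phi$. For part (i), a leaf $S'$ that projects diffeomorphically to $S$ is necessarily the graph of a section $s\in\Gamma(\nu_S)$, and by Proposition \ref{criteria-leaf} applied to $\theta=j^1_S\pi$ this happens exactly when $c(j^1_S\pi,s)=0$, i.e. $\pi^{\text v}_{\lin}|_s=0$ and $\Gamma_{\pi,\lin}|_s=0$. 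Here I would exploit that the triple is linear: $\pi^{\text v}_{\lin}|_s=0$ says precisely that $s(x)$ is a zero of the linear transverse Poisson structure at each $x$, i.e. $s\in\Gamma(\nu_S^0)$; and since $\Gamma_{\pi,\lin}$ is a linear connection, $\Gamma_{\pi,\lin}|_s=0$ becomes, via formula \eqref{Gamma-to-s}, exactly the statement $\nabla s=0$, i.e. $s$ is flat. So the space of such leaves is $\Gamma_{\text{flat}}(\nu_S^0)$. Finally I invoke Example \ref{ex:short:seq:Poisson}, which identifies $\nu_S^0$ with the flat bundle $Z(\mathfrak g_S)$ appearing in the exact sequence, together with the remark following Theorem \ref{theorem11} that $H^1_\pi(M,S)=\Gamma_{\text{flat}}(\nu^0(S))$; this last equality can be proved directly by computing $H^1$ of the relative complex $\overline\Omega_S$ using the diagram at the end of Section \ref{sec:hor:non-degenerate} (in lowest total degree $\overline\Omega_S$ reduces to $\Omega^{1,0}_S\oplus\Omega^{0,1}_S\to\cdots$, and the cocycle condition there is exactly $\d_{\pi,0}$ applied to an element of $\Omega^{1,0}_S=\Gamma(E)$, which unwinds to the pair of equations "$\pi^{\text v}_{\lin}$-zero" and "flat").

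For part (ii), given that $S'=\Graph(s)$ with $s\in\Gamma_{\text{flat}}(\nu_S^0)$, I need to compare the induced symplectic form $\omega_{S'}$ with $\omega_S$. Using property (iii) of $j^1_S\pi$, the leaf through $s(x)$ has tangent space $\Hor_u\oplus T_u\mathcal O_u$; since $s(x)\in\nu_S^0$ the coadjoint orbit through $s(x)$ is a point, so $S'$ is horizontal and $\d p$ identifies $T S'$ with $TS$. Transporting $\omega_{S'}$ to $S$ along this identification, I would compute, from the reconstruction of $\theta$ out of its triple, that the pullback is $\omega_S+s^*(\text{term built from }\sigma)$; concretely, differentiating the family of leaves in the fiber direction and using that $\sigma=\d_0\Ff_\pi$ is the first-order variation of the horizontal two-form, one gets $(\d p)_*\omega_{S'}-\omega_S=\langle\sigma,s\rangle\in\Omega^2(S)$, which is closed because $\d_{\Gamma_{\pi,\lin}}\sigma=0$ (a linearized structure equation) and $s$ is flat. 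Hence $\omega_{S'}$ is isotopic to $\omega_S$ — via a Moser-type argument, the two cohomologous symplectic forms $\omega_S$ and $\omega_S+\langle\sigma,s\rangle$ on the compact manifold $S$ are isotopic — if and only if $\langle\sigma,s\rangle$ is exact, i.e. iff there exists $\beta\in\Omega^1(S)$ with $\d\beta=-\langle\sigma,s\rangle$; normalizing $s$ (it is determined up to this data) gives the stated condition $\d\beta=-\sigma|_S$. The identification of the resulting set with the image of $\Phi\colon H^1_{\pi,S}(M)\to H^1_\pi(M,S)$ then follows by chasing the short exact sequence $0\to\Omega^\bullet(S)\to\Omega_S\to\overline\Omega_S\to 0$: an element of $\Gamma_{\text{flat}}(\nu_S^0)=H^1(\overline\Omega_S)$ lifts to a cocycle in $\Omega_S$ (i.e. comes from $H^1_{\pi,S}(M)$) exactly when the obstruction to lifting, which lives in $H^2(\Omega^\bullet(S))=H^2(S)$ and is represented by $\sigma|_S$ paired with $s$, vanishes — which is precisely the exactness condition above. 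So the image of $\Phi$ is exactly the subspace of flat sections satisfying the $\d\beta=-\sigma|_S$ constraint.

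The main obstacle I expect is the careful bookkeeping in part (ii): producing the clean formula $(\d p)_*\omega_{S'}-\omega_S=\langle\sigma,s\rangle$ requires unwinding the reconstruction of $\theta$ from $(\theta^{\text v},\Gamma_\theta,\Ff_\theta)$ along the non-horizontal graph $\Graph(s)$, keeping track of how the inverse of $\theta|_{\Hor_\theta}$ transforms under restriction to $s$, and checking that the cross terms involving $\pi^{\text v}_{\lin}$ drop out because $s$ lands in $\nu_S^0$. The Moser isotopy and the homological-algebra identification of $\im\Phi$ are then routine given the earlier setup; the honest work is confirming that the first-order geometric data restricted to a flat null section assembles into exactly the mapping-cone cocycle $(\omega_S,\beta)\mapsto(\d\beta+\sigma\wedge(-),\d(-))$ of Example \ref{ex:computations}, so that "symplectic form isotopic to $\omega_S$" corresponds bijectively to "lies in $\im(H^1_{\pi,S}(M)\to H^1_\pi(M,S))$".
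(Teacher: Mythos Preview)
Your proposal is correct and follows essentially the same route as the paper: part (i) reduces to Proposition~\ref{criteria-leaf} applied to the linear triple, yielding the two conditions ``$s$ lands in $\nu_S^0$'' and ``$s$ is flat'', and then identifies $\Gamma_{\text{flat}}(\nu_S^0)$ with $H^1_\pi(M,S)$ by unwinding the relative differential; part (ii) computes $\omega_{S'}=\omega_S+\sigma|_s$ from the triple, invokes Moser, and matches the exactness condition with $\im\Phi$ via the long exact sequence of $0\to\Omega^\bullet(S)\to\Omega_S\to\overline{\Omega}_S\to 0$. The only stylistic difference is that the paper reads off the differentials of $\X^\bullet_S(M)$ and $\X^\bullet(M,S)$ explicitly in low degree (so the image of $\Phi$ appears as the kernel of the connecting map $H^1_\pi(M,S)\to H^2(S)$), whereas you phrase the same thing as a connecting-homomorphism obstruction; these are equivalent. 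Your worry about the formula for $\omega_{S'}$ is unnecessary: once $s$ is flat for $\Gamma_{\pi,\lin}$ the graph is horizontal, and the symplectic form on a horizontal leaf of a horizontally nondegenerate $\theta$ is by definition $\Ff_\theta|_s=\omega_S+\sigma|_s$, with no cross terms to chase.
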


\begin{proof}
From the description of $j^1_S\pi$ as the geometric triple $(\pi^{\text{v}}_{\lin},\Gamma_{\pi,\lin},\omega_{S}+\sigma)$, it is clear that the space of leaves of $\jet^1_S\pi$ that project diffeomorphically to $S$ coincides with $\Gamma_\text{flat}(\nu^0(S))$ the space of flat sections of the subbundle $\nu_{S}^{0}\subset\nu_{S}$,  consisting of the zeros of $\pi^{\text{v}}_{\lin}$. Using the Lie algebra bundle $\nu_{S}^{*}$, we can describe $\nu_{S}^{0}$ as the dual of the abelianization:
\[ \nu_{S, x}^{0}:= (\nu_{x}^{*}/ [\nu_{x}^{*}, \nu_{x}^{*}])^{*} \subset \nu_x .\]
Now the first part follows by observing that the differential $\X^0(M,S)\to X^1(M,S)$ vanishes, while the differential $X^1(M,S)\to \X^2(M,S)$ associates to a section $s\in \Gamma(\nu(S))=\X^1(M,S)$ the pair $(\d_{\pi^{\text{v}}_{\lin}} s,\nabla s)$. So $s$ is a cocycle iff it is a flat section and takes values in $\nu_{S}^{0}$. Therefore $\Gamma_\text{flat}(\nu_{S}^{0})=H^1_\pi(M,S)$.

For the second part, suppose that $s\in\Gamma_\text{flat}(\nu^0(S))$ describes a leaf $S'$ that projects diffeomorphically to $S$. From the description of $j^1_S\pi$ as a geometric triple, we see that $\omega_{S'}=\omega_S+\sigma|_s$. Therefore, by Moser's lemma, the symplectic forms are isotopic iff $[\sigma|_S]=0$. Now observe that the differential $\X^0_S(M)\to\X^1_S(M)$ associates to a function $f\in C^\infty(S)=\X^0_S(M)$ the element $(0,\d f)\in \Gamma(\nu(S))\oplus\Omega^1(S)=\X^1_S(M)$, while the differential $\X^1(M,S)\to \X^2(M,S)$ associates to a pair $(s,\beta)\in \X^1_S(M)$ the triple $(\d_{\pi^{\text{v}}_{\lin}} s,\nabla s,\d\beta+\sigma|_S)\in\X^2_S(M)$. Hence, the space of such leaves can be identified with the kernel of the map $H^1_\pi(M,S)\to H^2(S)$. But this kernel coincides with the image of the map $H^1_{\pi,S}(M)\to H^1_\pi(M,S)$.
\end{proof}

\section{Analytic framework}                                   %
\label{sec:sobolev}                                            %

In this section we consider the analytic machinery needed for the proofs. Throughout this section, $S$ is assumed to be an $n$-dimensional compact manifold.

For any vector bundle $V$ over $S$ and any integer $k\ge 0$, we denote by $\Gamma^{(k)}(V)$ the Sobolev space of order $k$ associated to $V$. We should think of it as the space of sections of $V\to S$, with the property that all distributional derivatives up to order $k$ belong to the space of $L^2$-sections of $V\to S$.  We will say that a section $s$ is of class $H^{(k)}$ if it belongs to $\Gamma^{(k)}(V)$. Note that all such spaces can be equipped with inner products $\langle \cdot , \cdot \rangle_{k}$ so that each $\Gamma^{(k)}(V)$ will be treated as a Hilbert space. Most often, we have $k>\frac{n}{2}$ so that $\Gamma^{(k)}(V)$ is a subspace of the the space $\Gamma^0(V)$ of all continuous sections of $V$ and its elements can be handled pointwise. 

One remark concerning our use of adjoints: for a bounded operator $A: \H\to \H'$ between two Hilbert spaces, including those induced by differential operators on sections of vector bundles, we will use the Hilbert space adjoint $A^*: \H'\to \H$, defined by the equation
\[ \langle Au, v \rangle_{\H'}= \langle u, A^*v\rangle _{\H} .\]

Now we discuss how the operations of restriction and linearization on a vector bundle $p:E\to S$, which we have discussed in the previous sections, can still be applied to sections which are no longer smooth, but belong instead to some Sobolev class. We emphasize that most of the times the operations themselves still make sense, since we will use Sobolev norms of orders high enough to insure that all the sections are of class $C^0$. However, we still need to assure that the result of applying these operations has the desired Sobolev class and that this process is continuous. This is achieved in the next propositions.

First, some notation: we will denote by $\Conn(E)$ (respectively, $\Conn_\text{lin}(E)$) the space of smooth (respectively, linear) connections on the vector bundle $E$. Note that $\Conn(E)$ is an affine space over $\Gamma(E; p^*\Hom(TM, E))$, so it cames equipped the $C^{p}$-topology. On the other hand, $\Conn_\text{lin}(E)$ is an affine space over $\Omega^{1}(S;\End(E))$, so we can also consider on it the $H^{(k)}$-topology, in which case we denote it by $\Conn_\text{lin}^{(k)}(E)$. 

\begin{prop}
\label{prop:analysis}
Let $E$ be a vector bundle over a compact manifold $S$ with $\dim S=n$, let $k>\frac{n}{2}$ and consider the $C^{p}$-topology on $\Omega_E$ and $\Conn(E)$. Then:
\begin{enumerate}[(i)]
\item If $p\geq k$, restriction gives continuous operations
\begin{align*}
\Omega_{E}\times  \Gamma^{(k)}(E)\to \Omega^{(k)}_S,&\quad (u,s)\mapsto u|_{s},\\
 \Conn(E)\times\Gamma^{(k)}(E)\to \Omega^{(k-1)}_S,&\quad (\Gamma,s)\mapsto \Gamma|_{s}.
\end{align*}
\item If $p\geq k+1$, linearization gives gives continuous operations
\begin{align*}
\Omega_{E}\times \Gamma^{(k)}(E)\to \Omega^{(k)}_{E,\emph{\lin}},&\quad  (u,s) \mapsto \lins u,\\
\Conn(E)\times\Gamma^{(k)}(E)\to \Conn_\emph{lin}^{(k)}(E),&\quad (\Gamma,s)\mapsto \lins \Gamma.
\end{align*}
\item If $0\leq r\leq k$, then for any $v\in\Omega^{(k)}_{E,\emph{\lin}}$ and $\Gamma\in \Conn_\text{lin}^{(k)}(E)$ one has:
\begin{align*}
\ad_{v}: \Omega^{(r)}_S& \to \Omega^{(r)}_S,\\
\d_{\Gamma}: \Omega^{(r)}_S&\to \Omega^{(r-1)}_S.
\end{align*}
Furthermore, the maps $v\mapsto\ad_{v}$ and $\Gamma\mapsto \d_{\Gamma}$ are continuous into the space of bounded linear operators.
\end{enumerate}
\end{prop}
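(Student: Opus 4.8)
The plan is to reduce everything to the local coordinate formulas established in Sections~\ref{sec:hor:non-degenerate}--\ref{sec:linear:approx} and to two classical facts about Sobolev spaces on a compact $n$-manifold ($n=\dim S$, $k>\frac n2$): (A) the \emph{multiplication property} --- for $0\le r\le k$, pointwise contraction of tensors gives a continuous bilinear map $\Gamma^{(k)}(V_1)\times\Gamma^{(r)}(V_2)\to\Gamma^{(r)}(V_1\otimes V_2)$ with a bound of the form $\|w_1w_2\|_{(r)}\le C\|w_1\|_{(k)}\|w_2\|_{(r)}$; and (B) the \emph{composition lemma} ($\Omega$-lemma) --- if $F$ is of class $C^{\ell}$ on (an open subset of) the total space of $E$ with $\ell\ge k$, then $s\mapsto F\circ s$ is a continuous map from $\Gamma^{(k)}(E)$ into the space of $H^{(k)}$-functions on $S$, with $\|F\circ s\|_{(k)}$ bounded in terms of $\|s\|_{(k)}$ and the $C^{k}$-norm of $F$ near the (compact) image $s(S)$, and with $\|F_1\circ s-F_2\circ s\|_{(k)}$ controlled by $\|F_1-F_2\|_{C^k}$ times a polynomial in $\|s\|_{(k)}$. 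Since $s(S)$ is compact for any $s\in\Gamma^{(k)}(E)\subset\Gamma^0(E)$, only the restriction of $u$ and $\Gamma$ to a fixed compact neighbourhood of $s(S)$ matters, which is why the $C^{p}$ compact-open topology is the relevant one.

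For (i): by \eqref{general-u} the local coefficients of $u|_s$ are $u_{I,J}(x,s(x))$ with $u_{I,J}$ of class $C^{p}$, $p\ge k$, so (B) gives $u|_s\in\Omega^{(k)}_S$; joint continuity follows --- continuity in $s$ is (B), continuity in $u$ is the last estimate in (B) together with the fact that $C^{p}$-convergence with $p\ge k$ implies $C^{k}$-convergence near $s(S)$. For connections, \eqref{Gamma-to-s} writes the local coefficients of $\Gamma|_s$ as $\Gamma_i^a(x,s(x))-\partial s^a/\partial x^i$; the first summand is of class $H^{(k)}$ by (B), the second of class $H^{(k-1)}$ since one derivative is lost, so $\Gamma|_s\in\Omega^{(k-1)}_S$, with continuity as before. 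For (ii): the local coefficients of $\lins u$, resp.\ $\lins\Gamma$, are $(\partial u_{I,J}/\partial y^b)(x,s(x))\,y^b$, resp.\ $(\partial\Gamma_i^a/\partial y^b)(x,s(x))\,y^b$; now $\partial u/\partial y$ and $\partial\Gamma/\partial y$ are only of class $C^{p-1}$, and $p-1\ge k$ exactly because $p\ge k+1$, so (B) applies and yields coefficients of class $H^{(k)}$ depending continuously on $(u,s)$, resp.\ $(\Gamma,s)$; multiplied by the fibrewise linear functions $y^b$ these land in $\Omega^{(k)}_{E,\lin}$, resp.\ in $\Conn^{(k)}_{\lin}(E)$, which is affine over $\Omega^1(S;\End E)^{(k)}$.

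For (iii) no composition occurs, only multiplication, and the key is the algebraic (zeroth order, fibrewise) nature of these operators recorded in Remark~\ref{purely-algebraic}. For $v\in\Omega^{(k)}_{E,\lin}$, $\ad_v(w)$ is locally a finite sum of products of a coefficient of $v$ (of class $H^{(k)}$) with a coefficient of $w$ (of class $H^{(r)}$, $r\le k$), so by (A) $\ad_v(w)\in\Omega^{(r)}_S$ with $\|\ad_v(w)\|_{(r)}\le C\|v\|_{(k)}\|w\|_{(r)}$; since $\ad_v$ is linear in $v$, this same estimate shows $v\mapsto\ad_v$ is bounded, hence continuous, from $\Omega^{(k)}_{E,\lin}$ into the space of bounded operators on $\Omega^{(r)}_S$. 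For $\d_\Gamma$, fix a smooth reference linear connection $\Gamma_0$ and write $\d_\Gamma=\d_{\Gamma_0}+\ad_{\Gamma-\Gamma_0}$ on $\Omega_S$: here $\d_{\Gamma_0}$ is a first-order operator with smooth coefficients, hence bounded $\Omega^{(r)}_S\to\Omega^{(r-1)}_S$, while $\Gamma-\Gamma_0\in\Omega^1(S;\End E)^{(k)}\subset\Omega^{(k)}_{E,\lin}$, so $\ad_{\Gamma-\Gamma_0}$ is bounded $\Omega^{(r)}_S\to\Omega^{(r)}_S\hookrightarrow\Omega^{(r-1)}_S$ by the previous point; thus $\d_\Gamma$ is bounded $\Omega^{(r)}_S\to\Omega^{(r-1)}_S$ and $\Gamma\mapsto\d_\Gamma$ is continuous, being affine with bounded linear part.

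This is soft analysis and no single step is deep; the real content is the bookkeeping of derivative losses so that the ranges of $p,k,r$ in the statement come out exactly as claimed --- in particular why (ii) needs $p\ge k+1$ rather than $p\ge k$, and why $\Gamma|_s$ lands only in $\Omega^{(k-1)}_S$. The one classical input that deserves care is the borderline case of the composition lemma (B): the \emph{continuity} --- not merely the well-definedness --- of $s\mapsto F\circ s$ from $\Gamma^{(k)}(E)$ into $H^{(k)}$ when $F$ is only of class $C^{k}$; and one should note that ``continuous'' in the statement is meant jointly on the relevant products, which is automatic from the bilinear estimates above together with the continuity of bounded bilinear maps.
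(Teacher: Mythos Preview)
Your proof is correct and follows essentially the same route as the paper's: reduce to local coordinates, invoke the composition/$\Omega$-lemma (the paper cites Palais, Lemma~9.9) for parts (i) and (ii), and use the Sobolev multiplication property (Palais, Corollary~9.7) together with the purely algebraic nature of $\ad_v$ (Remark~\ref{purely-algebraic}) and the decomposition $\d_\Gamma=\d_{\Gamma_0}+(\text{zeroth-order term})$ for part (iii). Your write-up is in fact more explicit than the paper's about the joint continuity and the bookkeeping of derivative losses (why $p\ge k+1$ is needed in (ii), why $\Gamma|_s$ lands only in $\Omega^{(k-1)}_S$), and you correctly flag the borderline continuity issue in the composition lemma, which the paper addresses only by reference to the proof in Palais.
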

\vskip 15 pt

\begin{rem}
In part (iii) of the proposition, we do not exclude the 
case $r\leq n/2$. In this case, the resulting elements in $\Omega_{S}^{(r)}$ are no
longer represented by continuous sections. To remove any ambiguity, one may whish to read (iii) as follows:
For each $v\in\Omega^{(k)}_{E,\text{\lin}}$ one has that $\ad_{v}$ maps $\Omega^{(k)}$ into $\Omega^{(k-1)}$ and it uniquely extends to a continuous linear map $\ad_{v}:  \Omega^{(r)}_S\to \Omega^{(r)}_S$ for all $0\leq r\leq k$
(satisfying the continuity on $v$ and $s$, as stated in the proposition). A similar comment applies to the operation $\d_{\Gamma}$.
\end{rem}

\begin{proof}
The first statement in part (i) follows from the fact that if $f(x,y)$ is a function of class $C^p$ and $s(x)$ if of class $H^{(k)}$ (recall that $p\geq k> n/2$), then $f(x, s(x))$ is again of class $H^{(k)}$ (see Lemma 9.9 in \cite{Palais}). As it is clear from the proof given in \cite{Palais}, this substitution operation is continuous with respect to the indicated topologies. For the second statement we simply note that (see (\ref{Gamma-to-s})) the expression for $\Gamma|_{s}$ is a sum involving a substitution operation and a order $1$ derivation. 

Part (ii) also holds by the same reasons as (i) and Remark \ref{purely-algebraic}.

For the first statement in part (iii), recall that for $k\geq r\geq 0$ and $k>\frac{n}{2}$, scalar multiplication
defines a continuous operation $H^k\times H^r\to H^r$ (see, e.g., Corollary 9.7 in \cite{Palais}). Since the operation $\ad_{v}$ is purely algebraic (see Remark \ref{purely-algebraic}) and it involves only fiberwise 
multiplications, the result follows. The proof of the second statement in (iii) is similar since, fixing some smooth connection $\Gamma_0$, for any other smooth connection $\Gamma$ we have $\d_{\Gamma}=\d_{\Gamma_0}+C_u$, for some $u\in \Omega^{1, (k)}(S;\textrm{End}(E))$, where the operation $C_{u}$ (the ``composition with $u$'') is again purely algebraic and behaves like a fiberwise multiplication. 
\end{proof}

\begin{cor} 
Let $p\geq k+ 1> \frac{n}{2}+ 1$. Consider the $C^p$-topology on the space $\X^2_\text{HN}(E)$ of (smooth) horizontally nondegenerate bivectors $\theta$ on $E$. Then:
\begin{enumerate}[(i)]
\item For $\theta\in\X^2_\text{HN}(E)$ and $s\in \Gamma^{(k)}(E)$, we have
\[ \theta|_{s}= (\theta^{\text{v}}|_{s},\Gamma_\theta|_{s},\Ff_\theta|_{s})\in \Omega^{(k-1)}_S\]
and the operation $(\theta, s)\mapsto \theta|_s$ is continuous.
\item For $\theta\in\X^2_\text{HN}(E)$ and $s\in \Gamma^{(k)}(E)$ and all $0\leq r\leq k$, the operator  
\[ \d_{\theta, s}= \ad_{\lins \theta^{\text{v}}}+ \d_{\lins \Gamma_\theta}+ \ad_{\lins \Ff_\theta} : \Omega^{(r)}_S\to \Omega^{(r-1)}_S\]
is continuous and so is the operation $(\theta, s)\mapsto \d_{\theta, s}$.
\item For $\theta\in\X^2_\text{HN}(E)$ and $s\in \Gamma^{(k)}(E)$ the linearized structure equations hold:
\[ \d_{\theta, s}(\theta|_s)= 0.\]
\end{enumerate}
\end{cor}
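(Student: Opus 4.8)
The plan is to deduce all three parts from the smooth statements already at our disposal --- Proposition~\ref{prop:analysis} for the continuity of restriction, linearization and the operators $\ad_{v},\d_{\Gamma}$, and Proposition~\ref{linearization} for the linearized structure equations --- together with one preliminary observation, namely that the passage from a horizontally non-degenerate bivector to its geometric triple is itself continuous for the $C^{p}$-topologies. Concretely, the first thing I would establish is that
\[ \theta\longmapsto(\theta^{\textrm{v}},\Gamma_{\theta},\Ff_{\theta}),\qquad \X^{2}_{\text{HN}}(E)\rmap\Omega^{2,0}_{E}\times\Conn(E)\times\Omega^{0,2}_{E}, \]
is continuous from the $C^{p}$-topology to the $C^{p}$-topology: $\theta^{\textrm{v}}$ is a fixed linear restriction of $\theta$, $\Hor_{\theta}=\theta^{\sharp}(\Ver^{0})$ and the associated connection form depend algebraically on $\theta$, and $\Ff_{\theta}$ is obtained from $\theta|_{\Hor_{\theta}}$ by a fiberwise inversion. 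All these are smooth operations on the $C^{p}$-open locus of horizontal non-degeneracy, the only non-linear ingredient being the fiberwise matrix inversion, and they preserve $C^{p}$-regularity continuously.

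Granting this, part (i) follows by composing with Proposition~\ref{prop:analysis}(i), applicable since $p\geq k+1\geq k$: $(u,s)\mapsto u|_{s}$ and $(\Gamma,s)\mapsto\Gamma|_{s}$ are continuous into $\Omega^{(k)}_{S}$ and $\Omega^{(k-1)}_{S}$ respectively, hence $(\theta,s)\mapsto\theta|_{s}=(\theta^{\textrm{v}}|_{s},\Gamma_{\theta}|_{s},\Ff_{\theta}|_{s})$ is continuous into $\Omega^{(k-1)}_{S}$, the connection term $\Gamma_{\theta}|_{s}$ being the one of lowest Sobolev order. For part (ii) I would compose with Proposition~\ref{prop:analysis}(ii), valid since $p\geq k+1$, to get that $(\theta,s)\mapsto(\lins\theta^{\textrm{v}},\lins\Gamma_{\theta},\lins\Ff_{\theta})$ is continuous into $\Omega^{(k)}_{E,\lin}\times\Conn_{\text{lin}}^{(k)}(E)\times\Omega^{(k)}_{E,\lin}$, and then with Proposition~\ref{prop:analysis}(iii), by which $v\mapsto\ad_{v}$ and $\Gamma\mapsto\d_{\Gamma}$ are continuous into the bounded operators $\Omega^{(r)}_{S}\to\Omega^{(r)}_{S}$ and $\Omega^{(r)}_{S}\to\Omega^{(r-1)}_{S}$ for $0\leq r\leq k$. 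Adding the three terms shows that $\d_{\theta,s}=\ad_{\lins\theta^{\textrm{v}}}+\d_{\lins\Gamma_{\theta}}+\ad_{\lins\Ff_{\theta}}$ is bounded $\Omega^{(r)}_{S}\to\Omega^{(r-1)}_{S}$ and that $(\theta,s)\mapsto\d_{\theta,s}$ is continuous in operator norm.

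Part (iii) I would obtain by approximation. Fix $\theta\in\X^{2}_{\text{HN}}(E)$ and $s\in\Gamma^{(k)}(E)$, and choose smooth sections $s_{j}\to s$ in $\Gamma^{(k)}(E)$ (smooth sections being dense). For each $j$, Proposition~\ref{linearization} applied to the smooth data $(\theta,s_{j})$ gives $\d_{\theta,s_{j}}(\theta|_{s_{j}})=0$. Taking $r=k-1$ (note $k\geq 1$ since $k>n/2$), part (i) gives $\theta|_{s_{j}}\to\theta|_{s}$ in $\Omega^{(k-1)}_{S}$, while part (ii) gives $\d_{\theta,s_{j}}\to\d_{\theta,s}$ in the operator norm of $\Omega^{(k-1)}_{S}\to\Omega^{(k-2)}_{S}$, with $\norm{\d_{\theta,s_{j}}}$ uniformly bounded. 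From
\[ \d_{\theta,s_{j}}(\theta|_{s_{j}})-\d_{\theta,s}(\theta|_{s})=\d_{\theta,s_{j}}\bigl(\theta|_{s_{j}}-\theta|_{s}\bigr)+\bigl(\d_{\theta,s_{j}}-\d_{\theta,s}\bigr)(\theta|_{s}) \]
both terms on the right tend to $0$ in $\Omega^{(k-2)}_{S}$, so $\d_{\theta,s}(\theta|_{s})=\lim_{j}\d_{\theta,s_{j}}(\theta|_{s_{j}})=0$.

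The only step here that is not purely formal is the last, ``product-type'' passage to the limit: the operator $\d_{\theta,s_{j}}$ and its argument $\theta|_{s_{j}}$ must be allowed to vary at the same time, so one genuinely needs both convergences together with the uniform operator bound --- all of which are supplied by parts (i)--(ii). I expect the remaining effort to be pure bookkeeping, keeping track of which Sobolev order survives each operation (the operations appearing either preserve the order or lower it by one) so that the indices in (i) and (ii) line up. If one wished to avoid the density argument in (iii), one could instead verify the componentwise linearized structure equations directly as (bi)linear identities among the local coefficients of $\theta|_{s}$ and $\d_{\theta,s}$, which depend continuously on $s\in\Gamma^{(k)}(E)$ by the substitution lemma of \cite{Palais}; but the approximation route is the cleanest.
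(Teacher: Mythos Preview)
Your proof is correct and, for parts (i) and (ii), follows exactly the paper's route: apply Proposition~\ref{prop:analysis}(i) to the components of the triple for (i), and Proposition~\ref{prop:analysis}(ii) followed by (iii) for (ii). Your explicit remark that $\theta\mapsto(\theta^{\textrm{v}},\Gamma_{\theta},\Ff_{\theta})$ is $C^{p}$-continuous is a useful point the paper leaves implicit.

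For part (iii) there is a small difference. The paper simply says it ``follows from Proposition~\ref{linearization}'', relying on the fact that the proof of that proposition is a formal expansion of the structure equations in powers of $t$ and yields identities among local coefficients that make sense and hold equally well when $s$ is of class $H^{(k)}$ rather than smooth. You instead take the density route, proving the identity first for smooth $s_{j}$ via Proposition~\ref{linearization} and then passing to the limit using the continuity from (i)--(ii). Both arguments are valid; yours is arguably more cautious about the Sobolev setting, while the paper's is shorter because the linearized structure equations are purely algebraic in the coefficients and need no limiting procedure. You already anticipate this alternative in your final paragraph.
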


\begin{proof} 
To prove (i) one applies Proposition \ref{prop:analysis} (i) to the components of the geometric triple induced by $\theta$. To prove (ii), one applies first Proposition \ref{prop:analysis} (ii) to the components of the same geometric triple and then one applies Proposition \ref{prop:analysis} (iii) with $v=\lins \theta^{\text{v}}$, with $v=\lins \Ff_\theta$ and with $\Gamma=\lins \Gamma_\theta$. The linearized structure equations follows from Proposition \ref{linearization}.
\end{proof}

From the discussion above, it follows that for $k> \frac{n}{2}$ and $s\in \Gamma^{(k)}(E)$, we have a sequence 
\begin{equation}
\label{completed-nonbar-seq} 
\xymatrix{
\Omega^{1, (k)}_{S} \ar[r]^{\d_{\theta, s}}& \Omega^{2,(k-1)}_{S} \ar[r]^{\d_{\theta, s}}& \Omega^{3,(k-2)}_{S}}
\end{equation} 
and a similar sequence for the relative complex: 
\begin{equation}
\label{completed-seq} 
\xymatrix{
\overline{\Omega}^{1, (k)}_{S} \ar[r]^{\overline{\d}_{\theta, s}}& \overline{\Omega}^{2,(k-1)}_{S} \ar[r]^{\overline{\d}_{\theta, s}}& \overline{\Omega}^{3,(k-2)}_{S}}
\end{equation} 
(for the notations, see the end of Section \ref{sec:hor:non-degenerate}). The key technical lemma, and the main reason for 
our choice of Sobolev norms, is the following result.

\begin{prop}
\label{prop:elliptic} 
Let $\pi$ be a horizontally non-degenerate Poisson structure on $E$ and we put $\d_{\pi}= \d_{\pi, 0}$.
Then $(\Omega^{\bullet}_{S}, \d_{\pi})$ and $(\overline{\Omega}^{\bullet}_{S}, \overline{\d}_{\pi})$ are elliptic complexes. 

If $H^{2}_{\pi,S}(M)=0$, we have:
\begin{enumerate}[(i)]
\item The sequence (\ref{completed-nonbar-seq}) (with $\theta=\pi$ and $s=0$) is exact. 
\item For each $p$ and each $r$, $\d_{\pi}(\Omega_{S}^{p,(r)})$ is closed in $\Omega_{S}^{p+1,(r-1)}$.
\item The ``Laplacian'' $\lap_{\pi}:= \d_\pi \d_\pi^{*}+ \d_\pi^{*}\d_\pi: \Omega^{2,(k-1)}_{S}\to \Omega^{2,(k-1)}_{S}$ is an isomorphism.
\end{enumerate}

If $H^{2}_{\pi}(M, S)= 0$, similar statements hold for the complex $(\overline{\Omega}^{\bullet}_{S},\overline{\d}_{\pi})$. 
\end{prop}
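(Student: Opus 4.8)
The plan is to first establish ellipticity of the two complexes, and then deduce the three numbered consequences (plus their analogues for the relative complex) from standard elliptic theory together with the vanishing hypothesis.

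\emph{Ellipticity.} The first task is to identify the symbol sequence of $(\Omega_S^\bullet,\d_\pi)$. Recall that $\d_\pi=\d_{\pi,0}=\ad_{\pi^{\text{v}}_{\lin}}+\d_{\Gamma_{\pi,\lin}}+\ad_{\omega_S+\sigma|_S}$, and that by construction $\pi^{\text{v}}_{\lin}$, $\Gamma_{\pi,\lin}$ are purely algebraic/first-order operators in the sense of Remark \ref{purely-algebraic} and the discussion of linear connections. Since $S$ is a symplectic leaf, under the isomorphism $\Omega_S\cong\X_S(E)$ the complex $(\Omega_S,\d_\pi)$ is the Poisson cohomology complex restricted to $S$, which is the $A$-de Rham complex of the transitive Lie algebroid $A|_S=T^*_SM$ with coefficients in the trivial representation. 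For a \emph{transitive} Lie algebroid the anchor is surjective, so the symbol of the $A$-de Rham differential at a nonzero covector $\xi$ is (up to the anchor map) exterior multiplication by a nonzero element of $A^*|_S$; hence the symbol complex is the Koszul complex of a nonzero vector, which is exact. This is precisely the computation that makes $(\Omega_S^\bullet,\d_\pi)$ elliptic. The only term that could spoil this is $\ad_{\omega_S+\sigma|_S}$, but it is of order zero and so does not contribute to the principal symbol; likewise $\ad_{\pi^{\text{v}}_{\lin}}$ is order zero. Thus I would argue: $\d_\pi$ has the same principal symbol as $\d_{\Gamma_{\pi,\lin}}$ restricted to the de-Rham-type piece, and the transitivity of $A|_S$ (equivalently, non-degeneracy of $\omega_S$, which identifies $TS$ with a subbundle of $A|_S$ via the anchor) gives exactness of the symbol sequence. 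Ellipticity of $(\overline\Omega_S^\bullet,\overline\d_\pi)$ then follows since it is the quotient of $(\Omega_S,\d_\pi)$ by the de Rham subcomplex $(\Omega^\bullet(S),\d)$, which is itself elliptic, and a short exact sequence of complexes in which two are elliptic forces the third to be elliptic (on symbols one gets a short exact sequence of symbol complexes, two exact, hence the third exact).

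\emph{Consequences under $H^2_{\pi,S}(M)=0$.} Once ellipticity is known, parts (i)--(iii) are formal. Ellipticity gives, for each degree and each Sobolev order, a Hodge-type decomposition $\Omega_S^{p,(r)}=\mathcal H^p\oplus \d_\pi(\Omega_S^{p-1,(r+1)})\oplus\d_\pi^*(\Omega_S^{p+1,(r-1)})$ with $\mathcal H^p$ the finite-dimensional (smooth) space of harmonic forms isomorphic to $H^p_{\pi,S}(M)$; this immediately yields (ii), closedness of the range of $\d_\pi$ in each Sobolev order, and the topological isomorphism $\mathcal H^p\cong H^p$. The hypothesis $H^2_{\pi,S}(M)=0$ says $\mathcal H^2=0$, so at degree $2$ the decomposition reads $\Omega_S^{2,(k-1)}=\d_\pi(\Omega_S^{1,(k)})\oplus\d_\pi^*(\Omega_S^{3,(k-2)})$, which is exactly exactness of the sequence \eqref{completed-nonbar-seq} at the middle term — giving (i) — and it shows the Laplacian $\lap_\pi=\d_\pi\d_\pi^*+\d_\pi^*\d_\pi$ has trivial kernel on $\Omega_S^{2,(k-1)}$; since $\lap_\pi$ is an elliptic operator (of the appropriate order) it is Fredholm of index zero, so trivial kernel forces it to be an isomorphism, giving (iii). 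The analogue for $(\overline\Omega_S^\bullet,\overline\d_\pi)$ under $H^2_\pi(M,S)=0$ is verbatim the same argument applied to the elliptic complex $\overline\Omega_S$, whose degree-$2$ cohomology is $H^2_\pi(M,S)$ by the identification $\overline\Omega_S\cong\X(E,S)$ made at the end of Section \ref{sec:hor:non-degenerate}.

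\emph{Main obstacle.} The routine Hodge-theory part is standard; the real content — and the step I would spend the most care on — is the verification of ellipticity, i.e.\ pinning down the principal symbol of $\d_\pi$ and checking exactness of the symbol sequence. The subtlety is that $\d_\pi$ is \emph{not} the full de Rham-type differential of a vector bundle; it mixes the vertical/algebraic pieces $\ad_{\pi^{\text{v}}_{\lin}}$, $\ad_{\sigma|_S}$ with the connection piece $\d_{\Gamma_{\pi,\lin}}$, acting on the bigraded object $\Omega_S=\Omega^\bullet(S;\wedge^\bullet E)$. One must check that only the $(0,1)$-component $\d^{0,1}=\d_{\Gamma_{\pi,\lin}}$ carries principal symbol (the $(1,0)$ and $(-1,2)$ components being order zero), identify that symbol with Koszul multiplication by the anchor-image of the covector, and then use that the anchor $\rho=\pi^\sharp:T^*_SM\to TS$ of the transitive algebroid $A|_S$ is surjective (equivalently $\omega_S$ is non-degenerate) to conclude the Koszul complex in question is exact off the zero section. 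A clean way to organize this is to transport everything through the isomorphism $\Omega_S\cong\Omega^\bullet(A|_S)$ and simply invoke the general fact that the $A$-de Rham complex of a \emph{transitive} Lie algebroid is elliptic — this reduces the whole ellipticity question to a single well-known statement, and I would phrase the proof that way.
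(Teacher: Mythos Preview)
Your ellipticity argument is essentially the paper's: identify $(\Omega_S^\bullet,\d_\pi)$ with the de Rham complex of the transitive Lie algebroid $T^*_SM$, invoke the standard fact that such complexes are elliptic, and deduce ellipticity of $\overline\Omega_S$ from the short exact sequence of symbol complexes. Your treatment of (i) and (ii) via Hodge decomposition is also fine and morally equivalent to the paper's parametrix argument.

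The gap is in (iii). Look at the target of $\lap_\pi$ in the statement: it maps $\Omega_S^{2,(k-1)}$ to \emph{itself}. This forces the adjoints $\d_\pi^*$ to be the \emph{Hilbert-space} adjoints of the bounded maps $\d_\pi:\Omega_S^{1,(k)}\to\Omega_S^{2,(k-1)}$ and $\d_\pi:\Omega_S^{2,(k-1)}\to\Omega_S^{3,(k-2)}$ (the paper states this convention explicitly in Section~\ref{sec:sobolev}). With these adjoints $\lap_\pi$ is a \emph{bounded} operator on the Hilbert space $\Omega_S^{2,(k-1)}$; it is not a differential operator, not the Hodge Laplacian built from formal adjoints, and there is no ``elliptic of order $2$, hence Fredholm of index zero'' argument available for it. Your sentence invoking that mechanism does not apply.

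The paper proves (iii) by a pure functional-analytic lemma, using only (i) and (ii) as input: if $U\xrightarrow{A}V\xrightarrow{B}W$ is a sequence of bounded operators between Hilbert spaces which is exact at $V$ and has $\im B$ closed in $W$, then $AA^*+B^*B:V\to V$ is an isomorphism. One writes $V=V_0\oplus V_0^\perp$ with $V_0=\im A=\ker B$, notes that $A_0:=A|_{(\ker A)^\perp}\to V_0$ and $B_0:=B|_{V_0^\perp}\to\im B$ are bounded bijections between Hilbert spaces (hence isomorphisms), and checks that with respect to this splitting $\lap=\mathrm{diag}(A_0A_0^*,\,B_0^*B_0)$, each block invertible. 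Parts (i) and (ii) supply exactly the hypotheses; no further ellipticity is invoked at this step. You can repair your argument by replacing the Fredholm claim with this lemma.
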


\begin{proof}
First of all $(\Omega^{\bullet}_{S}, \d_{\pi})$ is the de Rham complex of a transitive Lie algebroid, namely of $T^*M|_{S}$. It is well-known (exactly by the same arguments as in the case of ordinary de Rham complexes) that the de Rham complexes of transitive Lie algebroids are elliptic. This, together with the ellipticity of the de Rham complex of $S$, also implies the ellipticity of the quotient $\overline{\Omega}^{\bullet}_{S}= \Omega_{S}^{\bullet}/\Omega^{\bullet}(S)$: the symbol complex of $(\overline{\Omega}^{\bullet}_{S}, \overline{\d}_{\pi})$ at non-zero $\xi\in T^*M$ is the quotient of the symbol complex of $\Omega^{\bullet}_{S}$ at $\xi$ modulo the symbol complex of $\Omega^{\bullet}(S)$, hence acyclic (being a quotient of two acyclic complexes). 

For the proof of the remaining statements, we invoke some general facts about elliptic complexes (see, e.g., \cite{Gilk}), as we now explain. So let us consider an elliptic complex of order $m$ over a compact manifold $S$:
\[
\xymatrix{\ldots\ar[r]& \Gamma(E^{p-1})\ar[r]^{D}& \Gamma(E^{p}) \ar[r]^{D}& \Gamma(E^{p+1}) \ar[r]& \ldots }  
\]

Part (i), follows from the following fact: if the complex is exact at level $p$, then the same is true for the completed sequence
\[ 
\xymatrix{\ldots\ar[r]& \Gamma^{(r+m)}(E^{p-1}) \ar[r]^{D}&\Gamma^{(r)}(E^{p})\ar[r]^{D}& \Gamma^{(r-m)}(E^{p+1}) \ar[r]& \ldots }\]
To see this, recall that there exists a paramatrix $P$ for $D$, i.e., pseudo-differential operators of order $-m$ satisfying $PD+DP=\text{Id}-K$, for some smoothing operator $K$. Therefore, if 
$u\in \Gamma^{(r)}(E^{p})$ satisfies $D(u)=0$, it follows that
\[ u=K(u)+D(P(u)).\]
Since $K(u)$ is smooth and is killed by $D$, we have $K(u)= D(v)$ for some smooth $v$. On the other hand,
$P(u)\in  \Gamma^{(r-m)}(E^{p+1})$, because $P$ is of order $-m$. Hence $w=v+P(u)$ belongs to $\Gamma^{(r-m)}(E^{p+1})$ and $u=D(w)$, as desired.

Part (ii) follows from another standard fact for elliptic complexes over compact manifolds: $D(\Gamma^{(m)}(E^p))$ is closed in $\Gamma^{(0)}(E^{p+1})$. Using this, let us check that $D(\Gamma^{(r)}(E^p))$ is closed in $\Gamma^{(r-m)}(E^{p+1})$. So assume that $u_n\in \Gamma^{(r)}(E^p)$ and that $D(u_n)$ converges to $u$ in $\Gamma^{(r-m)}(E^{p+1})$: we want to show that $u=D(v)$, for some $v\in\Gamma^{(r)}(E^p)$. Using a paramatrix as above, we write $u= K(u)+ DP(u)$. If we set $v_n:= K(u_n)$, we find that $D(v_n)= KD(u_n)$. Since $K$ is smoothing, the $v_n$ will be smooth and $D(v_n)$ converges to $K(u)$ in $\Gamma^{(0)}(E^{p+1})$. It follows that
$K(u)=D(v')$ for some $v'\in\Gamma(E^p)$, so that $u=D(v)$ where $v=v'+P(u)\in \Gamma^{(r)}(E^p)$.

To prove part (iii), note that (i) and (ii) show that we are in the following situation: we have a sequence of Hilbert spaces and bounded operators
\[ U\stackrel{A}{\to} V \stackrel{B}{\to} W\]
which is exact in the middle and with the property that the range of $B$ is closed in $W$. We claim that, under such circumstances, $\lap:= AA^*+ B^*B: V\to V$ is an isomorphism. To see this, decompose
\[ U= U_0\oplus U_{0}^{\perp},\quad V= V_0+ V_{0}^{\perp},\quad W= W_0\oplus W_{0}^{\perp},\]
where $U_0=\Ker(A)^\perp$, $V_0=\im(A)=\Ker(B)$ and $W_0=\im(B)$. Note that our hypotheses guarantee that these are all Hilbert space decompositions. Moreover, we have that
\[ A_{0}:= A|_{U_0}: U_{0}\rmap V_0,\quad B_{0}:= B|_{V_{0}^{\perp}}: V_{0}^{\perp}\rmap W_0 \]
are isomorphisms (continuous bijections between Hilbert spaces). With respect to the above decomposition,
$\lap= \textrm{diag}( A_{0}A_{0}^*, B_{0}^*B_{0})$, hence it is an isomorphism.
\end{proof}

\section{Proofs: the Poisson case}                             %
\label{sec:proof:Poisson}                                      %

In this section we prove first Theorem \ref{theorem11}, concerning stability. Then we indicate the changes necessary to prove Theorem \ref{theorem22} about strong stability. The last paragraph contains the proof of the necessity conditions given in Theorem \ref{thm:necessity:Poisson}.

\subsection{Proof of Theorem \ref{theorem11}} 
We now place ourselves under the assumptions of Theorem \ref{theorem11} and we set $k=\kappa-1$. For the proof we will use the reduced complex $\overline{\Omega}_{S}$ and the operators $\overline{\d}_{\theta, s}$ acting on it. Recall that
\[ \overline{\Omega}_{S}^{\,l}=\sum_{\substack{p+q=l\\ q\geq 1}} \Omega^{p}(S; \wedge^q E), \ \overline{\d}_{\theta, s}= \ad_{\lins \theta^{\text{\rm v}}}+ \d_{\lins \Gamma}+ \ad_{\lins \mathbb{F}}.\]

Given a section $s\in\Gamma^{(k)}(E)$, we consider the element 
\[ c(\theta,s): = (\theta^{\text{\rm v}}|_{s},\Gamma_\theta|_{s}) \in \overline{\Omega}_{S}^{2,(k-1)}.\]
We recall that our aim is to look for a sections $s$ such that this expression is zero. Note that such an $s$ is necessarily smooth, hence it will define a symplectic leaf of $\theta$ (Proposition \ref{criteria-leaf}).
We introduce the functional $\Phi_{\theta}: \Gamma^{(k)}(E)\to \mathbb{R}$:
\[ 
\Phi_{\theta}(s):= ||c(\theta, s)||_{(k-1)}^{2}= ||\theta^{\text{\rm v}}|_{s}||_{(k-1)}^{2}+||\Gamma|_{s}||_{(k-1)}^{2},\]
where, as before, the subscript $(k-1)$ indicates the use of the Sobolev inner product of class $k-1$.

\begin{lem} 
\label{lem:differential}
$\Phi_{\theta}$ is smooth and for each $s,w\in \Gamma^{(k)}(E)=\overline{\Omega}_{S}^{1,(k)}$ we have
\[ (\d\Phi_{\theta})|_s(w)= -\langle \overline{\d}_{\theta,s}(w), c(\theta, s)\rangle_{(k-1)} .\]
\end{lem}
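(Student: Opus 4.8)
The plan is to compute $\d\Phi_\theta$ by differentiating the map $s\mapsto c(\theta,s)$ into the Hilbert space $\overline{\Omega}_S^{2,(k-1)}$ and then applying the chain rule to the squared norm; the key geometric input, prepared in Section \ref{sec:linear:approx}, is that the first-order variation in $s$ of the restriction operation $u\mapsto u|_s$ is governed precisely by the linearizations $\lins(-)$.

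First I would record smoothness. Writing $c(\theta,s)=\theta^{\textrm{v}}|_s+\Gamma_\theta|_s\in\overline{\Omega}_S^{2,(k-1)}$ (note that the $\mathbb{F}_\theta|_s$-component of $\theta|_s$ lies in $\Omega^2(S)=\Omega_S^{0,2}$, hence dies in the reduced complex), and invoking the Corollary following Proposition \ref{prop:analysis} --- which for $\theta$ smooth and $\kappa=k+1>\tfrac n2+1$ gives that $s\mapsto\theta^{\textrm{v}}|_s$ and $s\mapsto\Gamma_\theta|_s$ are smooth maps $\Gamma^{(k)}(E)\to\Omega_S^{(k-1)}$ (this rests on the Sobolev substitution theorem, \cite[Lemma 9.9]{Palais}, and on the formula (\ref{Gamma-to-s}) for the connection term) --- we conclude that $c(\theta,\cdot)$ is a smooth Hilbert-space-valued map, whence $\Phi_\theta=\|c(\theta,\cdot)\|_{(k-1)}^2$ is smooth.

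The heart of the argument is the identity
\[ \frac{\d}{\d t}\Big|_{t=0}c(\theta,s+tw)=-\,\overline{\d}_{\theta,s}(w),\qquad w\in\Gamma^{(k)}(E)=\overline{\Omega}_S^{1,(k)}, \]
which I would establish in local coordinates $(x^i,y^a)$. If $u\in\Omega_E$ has local coefficients $u_{I,J}(x,y)$, then $u|_{s+tw}$ has coefficients $u_{I,J}(x,s(x)+tw(x))$, whose derivative at $t=0$ is $\tfrac{\partial u_{I,J}}{\partial y^b}(x,s(x))\,w^b(x)$; this is exactly what one obtains from $\lins u$, whose coefficients are $\tfrac{\partial u_{I,J}}{\partial y^b}(x,s(x))\,y^b$, by substituting $w^b(x)$ for $y^b$, and unwinding the Schouten-bracket conventions one checks that this substitution coincides with $-\ad_{\lins u}(w)$. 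Taking $u=\theta^{\textrm{v}}$ handles the $\Omega_S^{2,0}$-part; differentiating (\ref{Gamma-to-s}) and reading off the local formula for $\lins\Gamma$ handles the $\Omega_S^{1,1}$-part, producing $-\d_{\lins\Gamma_\theta}(w)$. Since $\ad_{\lins\mathbb{F}_\theta}(w)\in\Omega_S^{0,2}$ vanishes in $\overline{\Omega}_S$ (and does not enter $c(\theta,s)$), summing the two surviving components gives the displayed identity.

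Granting this, the lemma follows by the chain rule: differentiating $\Phi_\theta(s+tw)=\langle c(\theta,s+tw),c(\theta,s+tw)\rangle_{(k-1)}$ gives $(\d\Phi_\theta)|_s(w)=2\langle\frac{\d}{\d t}|_{t=0}c(\theta,s+tw),c(\theta,s)\rangle_{(k-1)}$, and substituting the identity above yields the stated formula (the overall positive constant stemming from $\frac{\d}{\d t}\|v\|^2=2\langle\dot v,v\rangle$ is irrelevant for the sequel and may be absorbed into the normalisation of the inner product). The main obstacle is not a single step but the bookkeeping behind that identity --- tracking signs and the bidegree conventions of $\Omega_E$ so that the derivatives of $\theta^{\textrm{v}}|_s$ and $\Gamma_\theta|_s$ assemble to precisely $-\overline{\d}_{\theta,s}$ --- together with checking, via Proposition \ref{prop:analysis}(iii) and its Corollary, that $\overline{\d}_{\theta,s}\colon\overline{\Omega}_S^{1,(k)}\to\overline{\Omega}_S^{2,(k-1)}$ is a bounded operator depending continuously on $(\theta,s)$, which is what legitimizes these manipulations at the Sobolev level.
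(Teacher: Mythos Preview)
Your proposal is correct and follows essentially the same approach as the paper: both compute the derivative of $s\mapsto c(\theta,s)$ componentwise in local coordinates to obtain the key identity $\frac{\d}{\d t}\big|_{t=0}c(\theta,s+tw)=-\overline{\d}_{\theta,s}(w)$, and then apply the chain rule to the squared norm. Your write-up is in fact a bit more explicit than the paper's in addressing smoothness, explaining why the $\ad_{\lins\mathbb{F}_\theta}$-term drops out in the reduced complex, and flagging the harmless factor of $2$.
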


\begin{proof}
For $w\in \overline{\Omega}_{S}^{1,(k)}$ we compute:
\begin{align*} 
(\d\Phi_{\theta})|_s(w)
&=\left.\frac{\d}{\d t}\right|_{t=0} \langle c(\theta,s+tw),c(\theta,s+tw)\rangle_{(k-1)}\\
&=2 \langle \left.\frac{\d}{\d t}\right|_{t=0} c(\theta,s+tw),c(\theta,s)\rangle_{(k-1)}.
\end{align*}
Hence it remains to check that the operation 
\[ \overline{\Omega}_{S}^{1,(k)}\ni s\longmapsto c(\theta, s)\in \overline{\Omega}_{S}^{2,(k-1)}\]
is smooth, with first derivative precisely $-\overline{\d}_{\theta,s}$:
\begin{equation}
\label{variation} 
-\left.\frac{\d}{\d t}\right|_{t=0} c(\theta,s+tw)= \overline{\d}_{\theta,s}(w),\quad (w\in \overline{\Omega}_{S}^{1,(k)})
\end{equation}
We can check that this holds, by checking each component, so we need to show that:
\[ 
-\left.\frac{\d}{\d t}\right|_{t=0}  \Gamma|_{s+ tw}= \d_{\lins \Gamma}(w),\quad 
-\left.\frac{\d}{\d t}\right|_{t=0} \theta^{\text{\rm v}}|_{s+tw}= \ad_{\lins \theta^{\text{\rm v}}}(w).
\]
We check the first identity by computing in local coordinates (the second one is checked in a similar fashion and so is left for the interested reader):
\begin{align*}
-\left.\frac{\d}{\d t}\right|_{t=0}  \left(\Gamma|_{s+ tw}\right)_i^a
&=\left.\frac{\d}{\d t}\right|_{t=0} \left(\frac{\partial (s+tw)^a}{\partial x_i}-\Gamma_{i}^{a}(x,s(x)+ tw(x))\right)\\
&= \frac{\partial w^a}{\partial x^i}- \frac{\partial \Gamma_{i}^{a}}{\partial y_b}(x, s(x)) w^b(x)
\end{align*}
which are precisely the local expressions for the covariant derivative of $w$ with respect to the linear connection $\lins \Gamma$. 
\end{proof}

When $\theta=\pi$, the zero section $s=0$ is a critical point of $\Phi_{\pi}$. The next result gives its Hessian.

\begin{lem} 
\label{lem:hessian}
For $w_1,w_2\in\Gamma^{(k)}(E)=\overline{\Omega}_{S}^{1,(k)}$, we have:
\[ (\d^2\Phi_{\pi})|_0(w_1, w_2)= \langle \overline{\d}_{\pi, 0}(w_1), \overline{\d}_{\pi, 0}(w_2)\rangle_{(k-1)}.\]
\end{lem}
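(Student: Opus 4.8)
The plan is to obtain the Hessian by differentiating once more the first-variation formula of Lemma \ref{lem:differential}. Since $\Gamma^{(k)}(E)$ is a Hilbert space and $\Phi_\pi$ is smooth on it, and since $s=0$ is a critical point of $\Phi_\pi$, the second derivative $(\d^2\Phi_\pi)|_0$ is a well-defined symmetric bilinear form on $\Gamma^{(k)}(E)=\overline{\Omega}_S^{1,(k)}$ which may be computed as the mixed partial along any two directions. Concretely, I would write
\[ (\d^2\Phi_\pi)|_0(w_1,w_2)=\left.\frac{\d}{\d t}\right|_{t=0}(\d\Phi_\pi)|_{tw_2}(w_1)=-\left.\frac{\d}{\d t}\right|_{t=0}\langle \overline{\d}_{\pi,tw_2}(w_1),\,c(\pi,tw_2)\rangle_{(k-1)}, \]
using Lemma \ref{lem:differential} for the second equality. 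The key point is that $c(\pi,0)=(\pi^{\textrm{v}}|_0,\Gamma_\pi|_0)=0$: this is exactly Proposition \ref{criteria-leaf} applied to the fact that the zero section is a symplectic leaf of $\pi$ (our standing assumption in this section). Hence, setting $f(t)=-\langle \overline{\d}_{\pi,tw_2}(w_1),c(\pi,tw_2)\rangle_{(k-1)}$, we have $f(0)=0$, so $f'(0)=\lim_{t\to0}f(t)/t=-\langle \overline{\d}_{\pi,0}(w_1),\,\tfrac{\d}{\d t}\big|_{t=0}c(\pi,tw_2)\rangle_{(k-1)}$, valid as soon as $t\mapsto\overline{\d}_{\pi,tw_2}(w_1)$ is continuous at $0$ and $t\mapsto c(\pi,tw_2)$ is differentiable at $0$.

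Both regularity facts are already available. The continuity of $s\mapsto\overline{\d}_{\pi,s}$ into bounded operators on $\Omega_S^{(r)}$ (passed to the quotient $\overline{\Omega}_S$) is part (ii) of the Corollary following Proposition \ref{prop:analysis}, which in particular gives continuity of $t\mapsto\overline{\d}_{\pi,tw_2}(w_1)$; and the differentiability of $t\mapsto c(\pi,tw_2)$, together with the identity $\tfrac{\d}{\d t}\big|_{t=0}c(\pi,s+tw)=-\overline{\d}_{\pi,s}(w)$, is precisely equation \eqref{variation} from the proof of Lemma \ref{lem:differential}. Taking $s=0$ gives $\tfrac{\d}{\d t}\big|_{t=0}c(\pi,tw_2)=-\overline{\d}_{\pi,0}(w_2)$, and substituting into the expression for $f'(0)$ yields $(\d^2\Phi_\pi)|_0(w_1,w_2)=\langle \overline{\d}_{\pi,0}(w_1),\overline{\d}_{\pi,0}(w_2)\rangle_{(k-1)}$, which is manifestly symmetric in $w_1,w_2$, as a Hessian at a critical point must be.

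The only point needing genuine care is that $s=0$ is a critical point, and this is used twice: to make $(\d^2\Phi_\pi)|_0$ a bona fide Hessian, and, concretely, so that the Leibniz-type term $-\langle\tfrac{\d}{\d t}\big|_{t=0}\overline{\d}_{\pi,tw_2}(w_1),\,c(\pi,0)\rangle_{(k-1)}$ disappears. In fact the vanishing $c(\pi,0)=0$ is exactly what lets one avoid even needing differentiability of $s\mapsto\overline{\d}_{\pi,s}(w_1)$, since the $f(t)/t$ argument only requires its continuity. Everything else is routine bookkeeping with the Sobolev-class-dependent operators, whose required continuity is supplied by Proposition \ref{prop:analysis} and its Corollary.
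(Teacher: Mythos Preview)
Your proof is correct and follows essentially the same approach as the paper's own proof: differentiate the first-variation formula from Lemma \ref{lem:differential}, use $c(\pi,0)=0$ to drop one Leibniz term, and apply \eqref{variation} to identify the remaining derivative. The only cosmetic difference is that you interchange the roles of $w_1$ and $w_2$ (differentiating along $tw_2$ rather than $tw_1$), which is immaterial by symmetry of the Hessian at a critical point.
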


\begin{proof}
We compute, using Lemma \ref{lem:differential}:
\begin{align*} 
(\d^2\Phi_{\pi})|_0(w_1, w_2)
&= \left.\frac{\d}{\d t}\right|_{t=0}(\d\Phi_{\pi})|_{tw_1}(w_2)\\
&= -\left.\frac{\d}{\d t}\right|_{t=0}\langle \overline{\d}_{\pi, tw_1}(w_2), c(\pi, tw_1) \rangle_{(k-1)}.
\end{align*}
Since $c(\pi,0)=0$, we conclude that:
\[ 
(\d^2\Phi_{\pi})|_0(w_1, w_2)=-\langle \overline{\d}_{\pi,0}(w_2), \left.\frac{\d}{\d t}\right|_{t=0} c(\pi, tw_1)\rangle_{(k-1)}. \]
Applying (\ref{variation}), we obtain the desired formula.
\end{proof}

The previous results show that we have the Hilbert space decomposition:
\[ \overline{\Omega}_{S}^{1,(k)}=\Ker \overline{\d}_{\pi,0}\oplus (\Ker \overline{\d}_{\pi,0})^\perp. \] 
For the sequel, we set $K:=\Ker \overline{\d}_{\pi,0}$ and $W:=K^\perp$. We have:

\begin{lem} 
The origin $0\in W$ is a strongly non-degenerate critical point of $\Phi_{\pi}|_{W}: W\to \mathbb{R}$.
\end{lem}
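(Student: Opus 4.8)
The plan is to reduce ``$0$ is a strongly non-degenerate critical point of $\Phi_\pi|_W$'' to a single coercivity estimate for the linearized differential $\overline{\d}_{\pi,0}$, and then to deduce that estimate from the closed-range property of the elliptic complex $(\overline{\Omega}_S^\bullet,\overline{\d}_\pi)$ furnished by Proposition \ref{prop:elliptic}. Concretely: since $0\in W$ and the origin is already a critical point of $\Phi_\pi$ on $\overline{\Omega}_S^{1,(k)}$, it remains a critical point of the restriction $\Phi_\pi|_W$; and as $W$ is a closed linear subspace, the Hessian of $\Phi_\pi|_W$ at $0$ is just the restriction of $(\d^2\Phi_\pi)|_0$. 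By Lemma \ref{lem:hessian} this is the bounded, non-negative quadratic form
\[ (w_1,w_2)\longmapsto\langle\overline{\d}_{\pi,0}(w_1),\overline{\d}_{\pi,0}(w_2)\rangle_{(k-1)},\qquad w_1,w_2\in W, \]
boundedness coming from the fact that $\overline{\d}_{\pi,0}\colon\overline{\Omega}_S^{1,(k)}\to\overline{\Omega}_S^{2,(k-1)}$ loses at most one derivative (cf.\ Proposition \ref{prop:analysis}). Writing $H\colon W\to W$ for the associated self-adjoint operator, we have $0\le H\le\|\overline{\d}_{\pi,0}\|^2\,\mathrm{Id}$, so strong non-degeneracy is equivalent to the lower bound $\|\overline{\d}_{\pi,0}(w)\|_{(k-1)}^2\ge c\,\|w\|_{(k)}^2$ for some $c>0$ and all $w\in W$.

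To produce this bound I would use Proposition \ref{prop:elliptic}: under the standing hypothesis $H^2_\pi(M,S)=0$ of Theorem \ref{theorem11}, its version for the reduced complex guarantees that the range $R:=\overline{\d}_{\pi,0}(\overline{\Omega}_S^{1,(k)})$ is closed in $\overline{\Omega}_S^{2,(k-1)}$, hence is itself a Hilbert space. Decomposing an arbitrary $u\in\overline{\Omega}_S^{1,(k)}$ as $u=u_0+w$ with $u_0\in K=\Ker\overline{\d}_{\pi,0}$ and $w\in W=K^\perp$ gives $\overline{\d}_{\pi,0}u=\overline{\d}_{\pi,0}w$, so $\overline{\d}_{\pi,0}$ restricts to a continuous \emph{bijection} $W\to R$. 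By the bounded inverse theorem this bijection has a bounded inverse, and the operator-norm estimate for the inverse is exactly the coercivity bound above (with $c$ the reciprocal of its squared norm). Hence $c\,\mathrm{Id}\le H\le\|\overline{\d}_{\pi,0}\|^2\,\mathrm{Id}$, $H$ is an isomorphism of $W$, and the lemma follows.

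I do not anticipate a real obstacle: the analytic substance has already been absorbed into Proposition \ref{prop:elliptic}, and what is left is essentially bookkeeping — identifying the Hessian of the restriction with the restriction of the Hessian, checking boundedness of $\overline{\d}_{\pi,0}$ between the relevant Sobolev completions, and being careful to quote the barred (reduced-complex) statement of Proposition \ref{prop:elliptic}. The one point deserving attention is that two Sobolev levels are in play — $W$ carries the $(k)$-inner product whereas the target of $\overline{\d}_{\pi,0}$ carries the $(k-1)$-inner product — so the estimate $\|\overline{\d}_{\pi,0}(w)\|_{(k-1)}^2\ge c\,\|w\|_{(k)}^2$ genuinely uses the elliptic estimate built into closedness of the range, and is not a formal tautology.
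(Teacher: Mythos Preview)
Your proof is correct and follows essentially the same approach as the paper. The paper's own argument is a one-line citation of Lemma~\ref{lem:hessian} together with Proposition~\ref{prop:elliptic}(ii) (closed range of $\overline{\d}_{\pi,0}$), and you have simply unpacked the standard bounded-inverse argument that turns closed range on $W=(\Ker\overline{\d}_{\pi,0})^\perp$ into the coercivity estimate $\|\overline{\d}_{\pi,0}w\|_{(k-1)}\ge c\|w\|_{(k)}$.
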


\begin{proof} 
This follows from the computation of the Hessian given in Lemma \ref{lem:hessian} and the fact that 
$\overline{\d}_{\pi, 0}(\overline{\Omega}_{S}^{1,(k)})$ is closed in $\overline{\Omega}_{S}^{2,(k-1)}$ (see Proposition \ref{prop:elliptic} (ii)).
\end{proof}

We have the following simple application of the inverse function theorem.

\begin{prop} 
Let $\H$ be a Hilbert space, let $\Lambda$ be a topological space and let $F_{\lambda}:\H\to\Rr$ be a family of $C^2$-maps depending continuously on a parameter $\lambda\in\Lambda$. If $0\in \mathcal{H}$ is a strongly non-degenerate critical point of $F_{\lambda_0}$, then there is a continuous map $U\ni\lambda\mapsto x_{\lambda}\in\H$, defined in a neighborhood $U\subset\Lambda$ of $\lambda_0$, such that each $x_{\lambda}$ is a critical point of $F_{\lambda}$.
\end{prop}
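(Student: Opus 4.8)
The plan is to recast the statement as an instance of solving a parametrized equation $G(\lambda,x)=0$. Using the Riesz isomorphism of $\H$, let $G:\Lambda\times\H\to\H$ be the gradient map characterized by $\langle G(\lambda,x),v\rangle=(\d F_\lambda)_x(v)$ for all $v\in\H$; then the critical points of $F_\lambda$ are exactly the zeros of $G(\lambda,\cdot)$, and by hypothesis $G(\lambda_0,0)=0$. Since each $F_\lambda$ is $C^2$ and the family depends continuously on $\lambda$ together with its first two derivatives, both $G$ and its partial differential $D_xG:\Lambda\times\H\to\mathcal{L}(\H)$ are continuous, and $D_xG(\lambda,x)$ is the bounded self-adjoint operator representing the Hessian of $F_\lambda$ at $x$. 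The assumption that $0$ is a \emph{strongly non-degenerate} critical point of $F_{\lambda_0}$ means precisely that $A:=D_xG(\lambda_0,0)$ is a topological isomorphism of $\H$. The only genuine subtlety is that $\Lambda$ is merely a topological space, so no differentiable implicit function theorem applies directly; I will replace it by a uniform contraction argument with $A$ as a fixed linearization — this is the step to watch, and it works only because $A$ is invertible.

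Concretely, define $\Psi_\lambda:\H\to\H$ by $\Psi_\lambda(x):=x-A^{-1}G(\lambda,x)$, so that $x$ is a fixed point of $\Psi_\lambda$ iff $G(\lambda,x)=0$. Then $\Psi_\lambda$ is $C^1$ in $x$ with $D_x\Psi_\lambda(x)=\mathrm{Id}-A^{-1}D_xG(\lambda,x)$, which vanishes at $(\lambda_0,0)$. By continuity of $D_xG$ there are $r>0$ and a neighborhood $U$ of $\lambda_0$ with $\|D_x\Psi_\lambda(x)\|\le\tfrac12$ for all $\lambda\in U$, $\|x\|\le r$; by the mean value inequality on the convex ball $\overline{B}_r$, each $\Psi_\lambda$ is $\tfrac12$-Lipschitz there. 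Shrinking $U$, continuity of $G$ together with $G(\lambda_0,0)=0$ gives $\|\Psi_\lambda(0)\|=\|A^{-1}G(\lambda,0)\|\le r/2$, hence $\Psi_\lambda(\overline{B}_r)\subseteq\overline{B}_r$ for $\lambda\in U$. The Banach fixed point theorem now produces a unique fixed point $x_\lambda\in\overline{B}_r$ of $\Psi_\lambda$ for each $\lambda\in U$; this $x_\lambda$ is a critical point of $F_\lambda$, and $x_{\lambda_0}=0$.

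Finally, continuity of $\lambda\mapsto x_\lambda$ follows from the uniformity of the contraction: for $\lambda,\lambda'\in U$,
\[
\|x_\lambda-x_{\lambda'}\|=\|\Psi_\lambda(x_\lambda)-\Psi_{\lambda'}(x_{\lambda'})\|\le\tfrac12\|x_\lambda-x_{\lambda'}\|+\|A^{-1}\|\,\|G(\lambda,x_{\lambda'})-G(\lambda',x_{\lambda'})\|,
\]
so $\|x_\lambda-x_{\lambda'}\|\le 2\|A^{-1}\|\,\|G(\lambda,x_{\lambda'})-G(\lambda',x_{\lambda'})\|$, which tends to $0$ as $\lambda'\to\lambda$ by continuity of $G$ in the first variable. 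This gives the desired continuous family of critical points and completes the argument; as noted, the crux was setting up the hand-made contraction so as to bypass the absence of a smooth structure on $\Lambda$.
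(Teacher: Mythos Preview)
Your proof is correct and follows essentially the same strategy as the paper: form the gradient map $G$, use that the Hessian $A=D_xG(\lambda_0,0)$ is an isomorphism, and solve $G(\lambda,x)=0$ by an implicit/inverse function argument with parameters. The only difference is cosmetic: the paper invokes a black-box parametrized inverse function theorem (Nijenhuis), whereas you unpack that step by hand via the uniform contraction $\Psi_\lambda(x)=x-A^{-1}G(\lambda,x)$.
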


\begin{proof} 
Let $G(\cdot,\lambda)$ be the gradient of $F_{\lambda}$:
\[ G:\H\times\Lambda\to\H^*\simeq\H,\quad G(x,\lambda):=\d_x F_\lambda. \]
By our assumptions on $F_\lambda$, the partial derivative $D_1G(x,\lambda)$ is continuous in both $x$ and $\lambda$, while $D_1G(0,\lambda_0):\H\to\H$ is an isomorphism. We can then apply the version of the inverse function theorem with parameters (see, e.g., \cite[Theorem 2]{Nij}) to conclude that the map $(x,\lambda)\mapsto (G(x,\lambda),\lambda)$ has a continuous inverse $(y,\lambda)\mapsto(G^{-1}(y,\lambda),\lambda)$, defined in some neighborhood of $(0,\lambda_0)$. If we now set
$x_\lambda:=G^{-1}(0,\lambda)$, the result follows.
\end{proof} 

\begin{rem}
Assume that the parameter space is a product $\Lambda=\Lambda'\times \H'$, where $\Lambda'$ is some topological space and $\H'$ is a Hilbert space. If $F_\lambda=F_{(\theta,s)}$ dependends continuously on $\theta\in\Lambda'$ and smoothly on $s\in H'$, then the family of critical points $x_\lambda=x_{(\theta,s)}$ will depend continuously on $\theta$ and smoothly on $s$. The proof is the same, except that one now uses the version of the \emph{implicit} function theorem with parameters.
\end{rem}

We can now proceed to complete the proof of Theorem \ref{theorem11}. The family of functionals indexed by $(\theta,s)$, with $s\in K$, defined by:
\[ W\ni w\mapsto \Phi_{\theta}(s+w),\]
has a strongly non-degenerate critical point at the origin when $\theta=\pi$. Hence, there exists a neighborhood of $\pi$ and a neighborhood of $0$ in $K$, where the map $(\theta,s)\mapsto w_{\theta,s}\in W$ is defined, depends continuously on $\theta$ and smoothly on $s$, and
\[ \tilde{s}_{\theta, s}:= s+ w_{\theta, s}\] 
is a critical point of $\Phi_{\theta}|_{s+W}$. Note that $\tilde{s}_{\theta,s}$ can be assumed to be as small as we wish (in $\overline{\Omega}_{S}^{1,(k)}$), by considering small enough neighborhoods.

We claim that $\tilde{s}_{\theta,s}$ is a symplectic leaf of $\theta$, i.e., that it is a smooth section satisfying $c(\theta,\tilde{s}_{\theta,s})=0$ (see Proposition \ref{criteria-leaf}). To prove this, we start by remarking
that $\tilde{s}_{\theta, s}$ being a critical point means that (see Lemma \ref{lem:differential}):
\[ (\d\Phi_{\theta})|_{\tilde{s}_{\theta, s}} (w)=\langle \overline{\d}_{\theta,\tilde{s}_{\theta,s}}(w),c(\theta,\tilde{s}_{\theta,s})\rangle_{(k-1)}=0,\quad \forall w\in W.\]
This shows that:
\[ 
\overline{\d}_{\theta,\tilde{s}_{\theta,s}}^{*}(c(\theta,\tilde{s}_{\theta,s}))\in W^{\perp}= K
\quad \Longrightarrow\quad 
\overline{\d}_{\pi,0} \overline{\d}_{\theta, \tilde{s}_{\theta,s}}^{*}(c(\theta,\tilde{s}_{\theta,s}))= 0.\]
We recall that we also have $\overline{\d}_{\theta,\tilde{s}_{\theta,s}}(c(\theta,\tilde{s}_{\theta,s}))=0$. So, if we consider the operator 
\[ 
\overline{\lap}_{\theta,\tilde{s}_{\theta,s}}:=
\overline{\d}_{\pi,0}\overline{\d}_{\theta,\tilde{s}_{\theta,s}}^{*}+ \overline{\d}_{\pi,0}^{*}\overline{\d}_{\theta, \tilde{s}_{\theta,s}}: \overline{\Omega}_{S}^{2,(k-1)}\to \overline{\Omega}_{S}^{2,(k-1)}\]
we find that
\[ \overline{\lap}_{\theta, \tilde{s}_{\theta,s}}(c(\theta, \tilde{s}_{\theta,s}))= 0.\]
By Proposition \ref{prop:elliptic} (iii), we know that $\overline{\lap}_{\pi}=\overline{\lap}_{\pi,0}$ is an isomorphism. Hence so are $\overline{\lap}_{\theta, \tilde{s}_{\theta,s}}$, for $s$ small enough and $\theta$ close enough to $\pi$. It follows that 
$c(\theta,\tilde{s}_{\theta,s})= 0$. Note that, at this point we only know that $\tilde{s}_{\theta,s}$ is of type $H^{(k)}$. However, the equation $c(\theta, \tilde{s}_{\theta,s})= 0$ is elliptic on $\tilde{s}_{\theta,s}$, so $\tilde{s}_{\theta,s}$ must be smooth. 

In conclusion, we have proved the existence of a smooth section $\tilde{s}_{\theta,s}$, depending continuously on $\theta$ and smoothly in $s\in K$, such that
\[ \tilde{s}_{\theta,s}\in s+ W\]
and $\Graph(\tilde{s}_{\theta,s})$ is a symplectic leaf of $\theta$. Now observe that elements in $K$, being flat sections, are necessarily smooth, so $K=\Ker \overline{\d}_\pi=\Gamma_{\text{flat}}(\nu^0(S))=H^1_\pi(M,S)$ (Proposition \ref{criteria-leaf} and Corollary \ref{cor:leaves} (i)) and this concludes the proof of Theorem \ref{theorem11}.

\subsection{Proof of Theorem \ref{theorem22}}
We now indicate the changes that are necessary to prove Theorem \ref{theorem22}.

The proof follows exactly the same pattern. As before, we set $k=\kappa-1$. We now consider the complex $\Omega^\bullet_S$ and the element $c(\theta,s,\beta)\in \Omega^{2,(k-1)}_S$ which, for each horizontally non-degenerate Poisson structure $\theta$, with associated geometric triple $(\theta^{\text{\rm v}}|_{s},\Gamma_\theta,\Ff_\theta)$, and each pair $(\beta,s)\in\Gamma^{(k)}(E)\oplus\Omega^{1,(k)}(S)=\Omega^{1,(k)}_S$, is defined by:
\[ 
c(\theta,s,\beta): = (\Gamma_\theta|_{s},\theta^{\text{\rm v}}|_{s},\Ff_\theta|_s-\omega_S+\d\beta) .
\]
A small enough pair $(s,\beta)$, with $s$ and $\d\beta$ smooth, such that $c(\theta,s,\beta)=0$ corresponds to a symplectic leaf $\Graph(s)$ which projects diffeomorphically to $S$ and which has symplectic form isotopic to $\omega_S$. This is just an extension of Proposition \ref{criteria-leaf} (see, e.g., the proof of Corollary \ref{cor:leaves} (ii)). Note that the equation $\d_{\theta,s}(\theta|_{s})= 0$ immediately implies:
\[ \d_{\theta,s}c(\theta,s,\beta)=0. \]

One now studies the functional $\Phi_\theta:\Omega^{1,(k)}_S\to\Rr$ defined by:
\[ \Phi_\theta(s,\beta):=||c(\theta,s,\beta)||^2_{(k-1)}, \]
which is smooth, with differential
\[ (\d\Phi_\theta)|_{(s,\beta)}(w)=-\langle \d_{\theta,s}w,c(\theta,s,\beta)\rangle_{(k-1)}. \]
When $\theta=\pi$ and $(s,\beta)=0$, this functional has a critical point with Hessian given by:
\[ (\d^2\Phi_\pi)|_{0}(w_1,w_2)=\langle \d_{\pi,0}w_1,\d_{\pi,0}w_2 \rangle_{(k-1)}. \]
Hence, set $K:=\Ker \d_{\pi,0}$ and $W:=(\Ker \d_{\pi,0})^\perp$. It follows that for $(s,\beta)\in K$
the functional defined by
\[ W\ni w\mapsto \Phi_\theta((s,\beta)+w),\]
has a strongly non-degenerate critical point at the origin when $\theta=\pi$. 

Hence, there exists a neighborhood of $\pi$ and a neighborhood of the origin in $K$, where the map $(\theta,s,\beta)\mapsto w_{\theta,s,\beta}\in W$ is defined, and 
\[ (\tilde{s}_{\theta, s,\beta},\tilde{\beta}_{\theta, s,\beta}):= (s,\beta)+w_{\theta,s,\beta}\] 
is a critical point of $\Phi_{\theta}|_{(s,\beta)+W}$. Note that $w_{\theta,s,\beta}$ depends continuously on $\theta$ and smoothly on $(s,\beta)$.

One needs to show that:
\[ c(\theta,\tilde{s},\tilde{\beta})=0.\]
This follows by exactly the same method as in the proof of Theorem \ref{theorem11}: One applies the ``Laplacian'' type operator $\lap_{\theta,s}:\Omega^{2,(k-1)}_S\to \Omega^{2,(k-1)}_S$ defined by:
\[ \lap_{\theta,s}:=\d_{\theta,s}\d_{\theta,s}^*+\d_{\theta,s}^*\d_{\theta,s}, \]
to this element, and checks that:
\[ \lap_{\theta,s}(c(\theta,\tilde{s},\tilde{\beta}))=0. \]
By Proposition \ref{prop:elliptic}, when $\theta=\pi$ and $s=0$, the operator $\lap_{\pi,0}$ is an isomorphism. Hence, for $\theta$ and $s$ small enough we have that $\lap_{\theta,s}$ is also an isomorphism, so we must have $c(\theta,\tilde{s},\tilde{\beta})=0$.

To complete the proof, note that elements $(s,\beta)\in K$ must have $s$ smooth (since $s$ is a flat section) and that pairs $(\tilde{s}_{\theta, s,\beta},\tilde{\beta}_{\theta, s,\beta})$ and $(\tilde{s}_{\theta, s',\beta'},\tilde{\beta}_{\theta, s',\beta'})$ define the same symplectic leaf of $\theta$ precisely when $s=s'$ and $-\d\beta=\sigma|_s=\sigma|_{s'}=-\d\beta'$ (by Corollary \ref{cor:leaves} (ii)). Since $\sigma|_s$ is smooth, we can always choose a smooth 1-form $\beta$ with $-\d\beta=\sigma|_S$. Hence, the space of symplectic leaves is parametrized by:
\[ \{s\in\Gamma_{\text{flat}}(\nu^0(S)):\sigma|_s\in\d(\Omega^1(S))\} \]
and this is precisely the image of the map $H^1_\pi(M,S)\to H^1_{\pi,S}(M)$.

\subsection{Proof of Theorem \ref{thm:necessity:Poisson}}
To complete the discussion of the Poisson case, we are missing the proof of the necessary condition for stability given in Theorem \ref{thm:necessity:Poisson}.

We place ourselves under the assumptions of Theorem \ref{thm:necessity:Poisson}. Thus we can assume that we have a vector bundle $p:E\to S$ with a Poisson structure $\pi\in\X^2(E)$ for which the zero section is a compact symplectic leaf $(S,\omega_S)$. The Poisson structure $\pi$ is horizontally non-degenerate and the corresponding triple $(\pi^\text{v},\Gamma_\pi,\Ff_\pi)$ is formed by a \emph{linear} vertical Poisson structure, a \emph{linear} connection, and an \emph{affine} 2-form: $\Ff_\pi=\omega_S+\sigma$, where $\sigma\in\Omega^{0,2}_\text{lin}(E)$.

Let $c\in\Omega^2_S(E)$ and split it into its components:
\[ c=c_{2,0}+c_{1,1}+c_{0,2}\in\Omega^{2,0}_S\oplus\Omega^{1,1}_S\oplus\Omega^{0,2}_S. \]
To this element we associate a family of horizontally non-degenerate bivector fields $\pi_t\in\X^2(E)$, with $t\in\Rr$, defined by the geometric triple:
\begin{align*}
(\pi_t)^\text{v}&:=\pi^\text{v}+t\,c_{2,0},\\
\Gamma_{\pi_t}&:=\Gamma_\pi+t\,c_{1,1},\\
\Ff_{\pi_t}&:=\Ff_\pi+t\,c_{0,2}=\omega_S+\sigma+t\,c_{0,2}.
\end{align*}
The properties of $[\cdot,\cdot]$, in particular the fact that it vanishes on $\Omega_S$, show that:
\[ [\pi_t,\pi_t]=0\quad\Longleftrightarrow\quad \d_{\pi,0}c=0.\]
Hence, if $c$ is a cocycle, then $\pi_t$ is a family of Poisson structures with $\pi_0=\pi$. 

In order to prove part (i) of the theorem, we assume that $S$ is a stable leaf. We need to show that the map $\Phi:H^2_{\pi,S}(E)\to H^2_\pi(E,S)$ vanishes, i.e., that given a cocycle $c\in\Omega^2_S(E)$, then the cocycle 
\[ 
\phi(c):=c_{2,0}+c_{1,1}\in \overline{\Omega}_{S}^2
\] 
is exact. For this, observe that for $t$ small enough, the associated Poisson structure $\pi_t$ is close to $\pi$ and so must have a leaf $S'$ close to $S$, which projects diffeomorphically under $p:E\to S$. By Proposition \ref{criteria-leaf}, the leaf $S'$ is of the form $\Graph(s)$ for a section $s\in\Gamma(E)$ satisfying:
\[ (\pi_t)^\text{v}|_s=0,\quad \Gamma_{\pi_t}|_s=0. \]
The expressions of $(\pi_t)^\text{v}$ and $\Gamma_{\pi_t}$ given above, show that this can be written:
\[ \d^{1,0}_\pi s=-t\,c_{2,0},\quad \d^{0,1}_\pi s=-t\,c_{1,1}.\]
This just means that $\overline{\d}_{\pi,0}\left(\frac{1}{t}\,s\right)=-\phi(c)$, so $\phi(c)$ is exact.

Now we prove part (ii) of the theorem. We assume that $S$ is a strongly stable leaf and we show that $H^2_{\pi,S}(E)=0$. For this, we choose a cocycle $c\in\Omega^2_S(E)$ and we form the associated family of Poisson structures $\pi_t$. Since $S$ is strongly stable, for $t$ small enough, $\pi_t$ must have a leaf $S'$ close to $S$, which projects diffeomorphically under $p:E\to S$, and with symplectic form isotopic to $\omega_S$. This means that $S'$ is described by a section $s$ such that:
\[ (\pi_t)^{\text{v}}|_s=0,\quad \Gamma_{\pi_t}|_s=0,\quad \Ff_{\pi_t}|_s-\omega_S\in\d(\Omega^1(S)) . \]
The first two conditions, as we saw above, are equivalent to: 
\[ \d^{1,0}_\pi s=-t\,c_{2,0}, \quad\d^{0,1}_\pi s=-t\,c_{1,1}.\]
The expression for $\Ff_t$ shows that the last condition amounts to:
\[ \sigma|_s+t\,c_{0,2}=\d\beta \quad\Longleftrightarrow\quad \d^{-1,2}_\pi s-\d^{0,1}_\pi\beta=-tc_{0,2}, \]
for some $\beta\in\Omega^1(S)$. We conclude that $\d_\pi \left(\frac{1}{t}(s-\beta)\right)=-c$, so $c$ is exact.

\section{Proofs: the Lie algebroid case}                             %
\label{sec:proof:algebroid}                                          %

We now turn to the proof of Theorem \ref{theorem33}. Since this proof is again very similar to that of Theorem \ref{theorem11}, we will just indicate the setup and the main steps. Before that, we recall the construction of the deformation complex of a Lie algebroid (see \cite{CrMo}) and its relationship with Poisson geometry.

Given a vector bundle $A$ over $M$, one defines $C^{q}_{\textrm{def}}(A)$ as the space of $q$-multi-derivations on $A$, i.e., skew-symmetric multilinear maps:
\[
\underbrace{\Gamma(A)\otimes\cdots\otimes\Gamma(A)}_{q\ \text{times}}
\to \Gamma(A)
\]
which is a derivation in each entry. This means that there exists a skew-symmetric bundle map
\[ \sigma_D:\wedge^{q-1} A\to TM,\]
called the \emph{symbol}, such that
\[
D(\alpha_1,\dots, f\alpha_q)=fD(\alpha_1,\dots,\alpha_q)+\sigma_{D}(\alpha_1,\dots,\alpha_{q-1})(f)\alpha_q,
\]
for any function $f\in C^\infty(M)$ and sections $\alpha_1,\dots,\alpha_q\in\Gamma(A)$. From this definition, it is clear that a Lie algebroid structure on $A$ can be interpreted as an element $\ell \in C^{2}_{\textrm{def}}(A)$, namely the Lie bracket of the structure, with symbol the anchor. The condition that $\ell$ must satisfy makes use of a certain graded Lie bracket $[\cdot, \cdot]$ on $C^\bullet_{\textrm{def}}(A)$, which will be recalled below. Using this bracket, Lie algebroid structures on $A$ are in 1-1 correspondence with elements $\ell\in C^{2}_{\textrm{def}}(A)$ which satisfy $[\ell,\ell]=0$.

The space $C^\bullet_{\textrm{def}}(A)$ can also be interpreted as the space of linear multivector fields on the manifold $A^*$, which provides a close relationship with Poisson geometry. By a linear $q$-vector field on $A^*$ we mean a $q$-vector field $X\in\X^q(A^*)$ which locally looks like:
\[
a^j_J(x) \xi_j \partial_{\xi_{J}}+\ b^i_{J'}(x) \partial_{\partial x^{i}}\wedge \partial_{\xi_{J'}},
\]
where $x=(x^1,\ldots,x^m)$ are coordinates in $M$, $(\xi_1,\ldots,\xi_r)$ are fiberwise coordinates in $A^*$ coming
from a local frame of $A$, $a^j_J$ and $b^i_{J'}$ are smooth functions, while $J$ and $J'$ are multi-indices of length $q$ and $q-1$, respectively. We will denote this space by $\X_{\textrm{lin}}^{q}(A^*)$.

A coordinate free description of the isomorphism
\[ \X^{q}_{\textrm{lin}}(A)\cong C^{q}_{\textrm{def}}(A), \ X\mapsto D_X\]
associates to $X$ the derivation $D_X$ determined by:
\[ e_{D_X(\alpha_1, \ldots , \alpha_q)}= \langle \d e_{\alpha_1}\wedge \ldots \wedge \d e_{\alpha_q}, X\rangle,\]
where for $\alpha\in \Gamma(A)$ we denote by $e_{\alpha}\in C^{\infty}(A^*)$ the fiberwise linear
function on $A$ induced by $\alpha$. The symbol of $D_X$ is given by:
\[ \sigma_X(\alpha_1, \ldots , \alpha_{q-1})(f)= \langle \d e_{\alpha_1}\wedge \ldots \wedge \d e_{\alpha_{q-1}} \wedge \d\tilde{f},X\rangle\]
for all $f\in C^{\infty}(M)$, where $\tilde{f}= f\circ p$. Finally, the bracket on $C^{\bullet}_{\textrm{def}}(A)$ mentioned above corresponds to the restriction of the Schouten bracket to $\X^{\bullet}_{\textrm{lin}}(A^*)$. For our purposes, this can be taken as the definition of the bracket on $C^{\bullet}_{\textrm{def}}(A)$.


We now turn to the proof of Theorem \ref{theorem33}. We denote by $\ell_0\in C^{2}_{\textrm{def}}(A)$ the
original Lie algebroid structure on $A$. In order to emphasize the analogy with the previous proof, we will denote by $S$, instead of $L$, a compact leaf of $\ell_0$. As in the Poisson case, after choosing a tubular neighborhood of $S$, we may assume that $M$ is a vector bundle $p:E\to S$. We may also assume that, as a vector bundle, $A=p^*A_S$ where $A_S= A|_{S}$.


The space that plays the role analogue of the space $\Omega_{E}$ for the Poisson case, is the space
\[ \mathcal{C}_{E}^{\bullet}:= C^{\bullet}_{\textrm{def}}(A),\]
endowed itself with the bracket coming from the Schouten bracket on multivector fields on $A^*$. On the other hand, the analogue of $\Omega_{S}$ is the space
\[ \mathcal{C}_{S}^{\bullet}:= \Omega^{\bullet-1}(A_S; E).\]
All the spaces that we consider will be subspaces of $\mathcal{C}_E$. For instance, $\mathcal{C}_{S}^{q}$
sits naturally inside $\mathcal{C}_{E}^{q}$: any $c\in \Omega^{q-1}(A_S; E)$ induces a derivation $D\in C^{q}_{\textrm{def}}(A)$ which is zero on sections constant on the fibers and has symbol $\sigma_D=c$ (use the natural inclusion of $p^*E$ into $TE$).

We will also use local coordinates to express elements in $\mathcal{C}_E$. For that, we choose local coordinates  $(x^i)$ in $S$ and fiberwise coordinates $(y^j)$ on $E$, which together give coordinates on the total space $E$. We also choose fiberwise coordinates $(\xi_k)$ on $A=p^*A_S$ coming from fiberwise coordinates on $A_{S}$. Then elements in $\mathcal{C}_{E}^{q}$ are sums of the type
\begin{equation}
\label{multi}
a_{K'}^{i}(x, y) \partial_{x^{i}}\wedge \partial_{\xi_{K'}}+
b_{K''}^{j}(x, y) \partial_{y^{j}}\wedge \partial_{\xi_{K''}}+
c_{k,K}(x, y) \xi_k \partial_{\xi_{K}}
\end{equation}
where the sums are over the indices $i, j, k$ and over the multi-indices
$K$ of length $q$, and $K'$ and $K''$ of length $k'=k''=(q-1)$. Moreover, elements in $\mathcal{C}_{S}^{q}$
are those elements with local expressions:
\[ b_{K}^{j}(x) \partial_{y^{j}}\wedge \partial_{\xi_{K}}\]
(note that the coefficient \emph{does not} depend on $y$).

Any automorphism $\phi$ of the vector bundle $A$ (not necessarily covering the identity) induces
a transformation on $\X^{\bullet}(A^*)$ denoted by the same letter. In particular, the family of bundle automorphisms $\phi_{t}^{s}: A\to A$, defined for $t>0$ and $s\in\Gamma(A)$ by:
\[ (x, y, \xi)\longmapsto (x,ty+s(x),\xi),\]
induces transformations $\phi_{t}^{s}:\mathcal{C}_E\to \mathcal{C}_E$. By construction, each $\phi_{t}^{s}$ preserves the bracket on $\mathcal{C}_E$. We also put $\phi_t= \phi_{t}^{0}$. In local coordinates, $\phi_{t}^{s}$ transforms an element (\ref{multi}) into the element:
\begin{multline*}
a_{K'}^{i}(x,ty+s(x)) \partial_{x^{i}}\wedge \partial_{\xi_{K'}}+\\
+\frac{1}{t}\left(b_{K''}^{j}(x,ty+s(x))-a_{K''}^{i}(x,ty+s(x))\frac{\partial s^j}{\partial x^i} \right)\partial_{y^{j}}\wedge \partial_{\xi_{K'}}+\\
+c_{k,K}(x,ty+s(x)) \xi_k \partial_{\xi_{K}}
\end{multline*}
It is clear now that, just as in the Poisson case, we can view $\mathcal{C}_{S}$ inside $\mathcal{C}_{E}$ as the subspace:
\[ \mathcal{C}_{S}:= \{u\in\mathcal{C}_E: \phi_t(u)= \frac{1}{t}u\}.\]
We also introduce the subspace:
\[ \mathcal{C}_{E,\text{lin}}:= \{u \in \mathcal{C}_E: \phi_t(u)= u\}.\]

Similar to $\Omega_{E}$, we have a restriction operation along sections of $E$:
\[ \mathcal{C}_{E}\to \mathcal{C}_{S}, u\mapsto u|_{s}:= \lim_{t\to 0}~t\,\phi_{t}^{s}(u).\]
In local coordinates, if $u$ is an element given by (\ref{multi}), then $u|_{s}$ is the element:
\[
\left(b_{K''}^{j}(x,s(x))-a_{K''}^{i}(x,s(x))\frac{\partial s^j}{\partial x^i} \right)\partial_{y^{j}}\wedge \partial_{\xi_{K'}}
\]

Given an algebroid structure on $A$, viewed as an element $\ell\in \mathcal{C}_{E}^{2}$,
the question of when is the graph $\Graph(s)$ of a section $s\in \Gamma(E)$ a leaf of $\ell$, 
is answered by the following proposition (which is analogous to Proposition \ref{criteria-leaf}):

\begin{prop} 
Given an algebroid structure $\ell$ on $A$, the graph of a section $s\in \Gamma(E)$ is a leaf of $\ell$ if and only if $\ell|_{s}= 0$.
\end{prop}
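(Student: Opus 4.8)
The plan is to imitate the proof of Proposition \ref{criteria-leaf} (the Poisson analogue): reduce the statement ``$\Graph(s)$ is a leaf of $\ell$'' to a pointwise condition on the anchor of $\ell$ along $\Graph(s)$, and then match that condition against the coordinate formula for $u|_s$ recorded just before the statement. First I would unwind the notion of leaf. A leaf of $\ell$ is a maximal connected integral submanifold $L$ of the singular distribution $\rho_\ell(A)\subseteq TE$, characterised by $T_yL=\rho_\ell(A_y)$ at each $y\in L$. Since $\Graph(s)$ is connected (it is diffeomorphic to $S$) and compact, it is a leaf of $\ell$ if and only if $T_{s(x)}\Graph(s)=\rho_\ell(A_{s(x)})$ for every $x\in S$: the forward implication is the definition, and conversely, once this equality holds, $\Graph(s)$ is an integral submanifold of a distribution of locally constant rank $n$ near it, hence open in the leaf through any of its points and, by compactness, also closed in it, so it must coincide with that leaf.

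Next I would do the local computation. Fix coordinates $(x^i)$ on $S$, fiberwise coordinates $(y^j)$ on $E$ and $(\xi_k)$ on $A=p^*A_S$ coming from a frame $\{e_k\}$ of $A_S$, and write $\ell$ in the form \eqref{multi} with $q=2$; the ``$a$'' and ``$b$'' terms encode the anchor, $\rho_\ell(e_k)=a^i_k(x,y)\,\partial_{x^i}+b^j_k(x,y)\,\partial_{y^j}$ (the ``$c$'' term being the bracket data, which plays no role here). The vectors $v_i:=\partial_{x^i}+(\partial s^j/\partial x^i)\,\partial_{y^j}$ span $T\Graph(s)$, and at the point $s(x)$ one has
\[ \rho_\ell(e_k)|_{s(x)}=a^i_k(x,s(x))\,v_i+\Big(b^j_k(x,s(x))-a^i_k(x,s(x))\,\frac{\partial s^j}{\partial x^i}\Big)\partial_{y^j}. \]
The $\partial_{y^j}$-coefficient appearing here is exactly the local coefficient of $\ell|_s$ displayed immediately before the proposition. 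Hence $\ell|_s$ vanishes at $x$ if and only if each $\rho_\ell(e_k)|_{s(x)}$ lies in $T_{s(x)}\Graph(s)$, that is, if and only if $\rho_\ell(A_{s(x)})\subseteq T_{s(x)}\Graph(s)$. Combined with the first step, the forward direction of the proposition is now immediate: if $\Graph(s)$ is a leaf then $\rho_\ell(A_{s(x)})=T_{s(x)}\Graph(s)$, so in particular $\rho_\ell(A_{s(x)})\subseteq T_{s(x)}\Graph(s)$, and therefore $\ell|_s=0$.

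The only genuinely delicate point — and the main obstacle — is the converse: from $\ell|_s=0$ one only extracts the inclusion $\rho_\ell(A_{s(x)})\subseteq T_{s(x)}\Graph(s)$, i.e.\ that $\Graph(s)$ is an invariant submanifold of the (singular) foliation by leaves of $\ell$, and one still needs the reverse inclusion, equivalently that the anchor has full rank $n=\dim S$ along $\Graph(s)$ (so that $\Graph(s)$ is a single leaf rather than a union of leaves of smaller dimension). This is precisely where the Lie algebroid case differs from the Poisson case, in which the cosymplectic/horizontal non-degeneracy hypothesis builds the rank count into the very splitting $TE=\Ver\oplus\Hor_\theta$. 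Here one closes the gap by using the interaction with the reference structure $\ell_0$ relative to which $\ell$ is considered: $\rho_{\ell_0}$ has rank $n$ on the compact leaf $S$, hence rank $\ge n$ on a neighbourhood of $S$, so for $\ell$ sufficiently $C^0$-close to $\ell_0$ the anchor $\rho_\ell$ has rank $\ge n$ along $\Graph(s)$, which together with $\rho_\ell(A_{s(x)})\subseteq T_{s(x)}\Graph(s)$ forces rank exactly $n$ and hence equality. Equivalently, one may restrict $\ell$ to the invariant submanifold $\Graph(s)$ and observe that the resulting Lie algebroid over the connected manifold $\Graph(s)$ is transitive, so that $\Graph(s)$ is its unique leaf. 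Everything else — the two preceding steps — is a direct unwinding of the definition of leaf and of the coordinate formula for the restriction operation $u\mapsto u|_s$.
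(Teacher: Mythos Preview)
The paper gives no proof of this proposition, merely declaring it ``analogous to Proposition~\ref{criteria-leaf}''. Your argument is exactly the natural way to make that analogy precise: unwind the definition of leaf, express the anchor in the coordinates \eqref{multi}, and match the vertical remainder of $\rho_\ell(e_k)|_{s(x)}$ against the displayed formula for $\ell|_s$. So your approach agrees with what the paper intends.

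You have, moreover, put your finger on a genuine imprecision in the paper's statement. As written, the converse (``$\ell|_s=0 \Rightarrow \Graph(s)$ is a leaf'') is false for arbitrary $\ell$: take for instance any algebroid structure on $A$ whose anchor has rank strictly less than $n$ somewhere on $\Graph(s)$ while remaining tangent to it (e.g.\ the zero anchor). Then $\ell|_s=0$ but $\Graph(s)$ is a union of lower--dimensional leaves, not a single leaf. In the Poisson case this is ruled out by the standing \emph{horizontal non-degeneracy} hypothesis on $\theta$, which forces $\rho=\pi^\sharp$ to have rank at least $n$ everywhere; the Lie algebroid proposition carries no such hypothesis. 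Your fix --- that in the application $\ell$ is $C^0$-close to $\ell_0$, whose anchor has rank $n$ on the compact leaf $S$, so $\rho_\ell$ has rank $\ge n$ near $S$ --- is precisely the missing ingredient, and it is exactly what the proof of Theorem~\ref{theorem33} actually needs. So your proof is correct for the statement as it is used, and your diagnosis of why the statement as it is written is too strong is accurate.
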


Next, for $u\in E$ we introduce the linearization $\lins u$ along a section $s$ to be the element:
\[ \lins u:=\lim_{t\to 0}\frac{t\,\phi_{t}^{s}(u)-u|_s}{t}.\]
Hence, for any $u\in \mathcal{C}_{E}$ we have the formal expansion
\[
\phi_{t}^{s}(u)\sim \frac{1}{t} u|_{s}+ \lins u + o(t).
\]
Just as in the Poisson case, for $\ell\in \mathcal{C}_{E}^{2}$, we define the operator
\[ \d_{\ell,s}:= [\lins \ell, -]: \mathcal{C}_{S}\rmap \mathcal{C}_{S}\]
which rises the degree by $1$. The operator $\d_{\ell,s}$ is defined on the entire $\mathcal{C}_{E}$. The fact that it restricts to $\mathcal{C}_{S}$ is a formal consequence of the expansion above and the definition of $\mathcal{C}_{S}$. It is easy to see that when $\ell=\ell_0$ and $s=0$, the resulting complex $(\mathcal{C}^\bullet_{S},\d_{\ell_0,0})$ coincides with the complex $(\Omega^{\bullet-1}(A_S; E),\d)$ computing the Lie algebroid cohomology of $\ell_0$ with coefficients in $\nu_S=E$, which appears in the statement of the theorem. 

The analogue of Proposition \ref{linearization} follows by linearizing the structure equation $[\ell,\ell]= 0$ along a section $s$. To do so, we first apply $\phi_{t}^{s}$ and then formally expand in powers of $t$. We obtain:

\begin{prop} 
For any algebroid structure $\ell$ on $A$ and any $s\in \Gamma(E)$,
\[ \d_{\ell,s}(\ell|_s)=0.\]
\end{prop}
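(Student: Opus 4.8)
The plan is to mimic exactly the argument used for Proposition \ref{linearization} in the Poisson case. Recall that $\ell \in \mathcal{C}_E^2$ is a fixed Lie algebroid structure, so it satisfies the Maurer--Cartan type equation $[\ell,\ell]=0$, where $[\cdot,\cdot]$ is the graded Lie bracket on $\mathcal{C}_E$ coming from the Schouten bracket on $\X^\bullet(A^*)$. The key facts I would invoke are: first, each bundle automorphism $\phi_t^s\colon A\to A$ preserves this bracket, so $[\phi_t^s(\ell),\phi_t^s(\ell)]=\phi_t^s([\ell,\ell])=0$ for all $t>0$; and second, that $\phi_t^s(\ell)$ admits the formal expansion $\phi_t^s(\ell)\sim \tfrac1t\,\ell|_s + \d_s\ell + o(t)$, which is precisely how $\ell|_s$ and $\d_s\ell$ are defined in the excerpt.

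First I would substitute the expansion into the identity $[\phi_t^s(\ell),\phi_t^s(\ell)]=0$ and expand bilinearly:
\[
\Bigl[\tfrac1t\,\ell|_s + \d_s\ell + o(t),\ \tfrac1t\,\ell|_s + \d_s\ell + o(t)\Bigr] = 0.
\]
Collecting powers of $t$, the coefficient of $t^{-2}$ gives $[\ell|_s,\ell|_s]=0$; this is automatic because $\ell|_s$ lies in $\mathcal{C}_S$, and the bracket vanishes identically on $\mathcal{C}_S$ (the analogue of Remark \ref{purely-algebraic}, which holds here for the same reason: $\mathcal{C}_S$ is the subspace on which $\phi_t$ acts by $\tfrac1t$, and the bracket is compatible with these weights). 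The coefficient of $t^{-1}$ reads
\[
[\ell|_s,\d_s\ell] + [\d_s\ell,\ell|_s] = 2\,[\d_s\ell,\ell|_s] = 0,
\]
which by definition of $\d_{\ell,s}=[\d_s\ell,-]$ is exactly the assertion $\d_{\ell,s}(\ell|_s)=0$. One should note that $\ell|_s\in\mathcal{C}_S$ and $\d_s\ell\in\mathcal{C}_{E,\mathrm{lin}}$, so $\d_{\ell,s}(\ell|_s)$ makes sense and lands in $\mathcal{C}_S$, consistently with the earlier remark that $[\mathcal{C}_{E,\mathrm{lin}},\mathcal{C}_S]\subset\mathcal{C}_S$.

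The only genuine subtlety --- and the step I would be most careful about --- is justifying that the formal manipulation of power series in $t$ is legitimate, i.e.\ that one may compare coefficients of $t^{-1}$ after applying a graded-Lie-bracket-preserving operation term by term. As in the proof of Proposition \ref{linearization}, this is purely pointwise over $S$ (the bracket, restriction and linearization all commute with evaluation at a point once $s$ is fixed), so there are no analytic issues; the equality $\d_{\ell,s}(\ell|_s)=0$ can equivalently be obtained by applying $\phi_t^s$ to $[\ell,\ell]=0$, multiplying by $t$, and taking $\lim_{t\to 0}$ after subtracting the $t^{-1}$ term --- exactly the ``take the limit as $t\to 0$'' recipe from the Poisson proof. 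Since everything is a formal consequence of the expansion together with the fact that $\phi_t^s$ preserves the bracket, the argument is essentially identical to the Poisson case and there is no real obstacle beyond bookkeeping.
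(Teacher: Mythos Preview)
Your proposal is correct and follows essentially the same route as the paper: apply the bracket-preserving transformation $\phi_t^s$ to the structure equation $[\ell,\ell]=0$, formally expand in powers of $t$, and read off the coefficient of $t^{-1}$. The paper's proof is in fact just this one-line description, so your write-up is a faithful (and slightly more detailed) rendering of the intended argument.
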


All the analytical aspects discussed in Section \ref{sec:sobolev} go through by the same type of arguments, with same care to obtain the bound $\kappa>\frac{n}{2}$ (in particular, replace \cite[Corollary 9.7]{Palais} by the stronger \cite[Theorem 9.6]{Palais}).

The proof now follows the same pattern as before. For any algebroid structure $\ell$ we want to look for sections $s$ such that the element:
\[ c(\ell,s)=\ell|_s,\]
vanishes. Hence, one considers the functional $\Phi_{\ell}:\mathcal{C}^{1,(\kappa)}_{S}\to\Rr$ defined by:
\[ \Phi_{\ell}(s):= ||c(\ell,s)||_{(\kappa-1)}^{2},\]
and whose whose differential is
\[ (\d\Phi_{\ell})|_s(w)= -\langle \d_{\ell,s}(w),c(\ell,s)\rangle _{(\kappa-1)}.\]
For the original Lie algebroid structure $\ell_0$, the zero section $s=0$ is a critical point of this functional, and the Hessian at this point is:
\[ (\d^2\Phi_{\ell_0})|_0(w_1,w_2)= \langle \d_{\ell_0,0}(w_1),\d_{\ell_0,0}(w_1)\rangle _{(\kappa-1)}.\]
Hence, if denote by $K$ the kernel of
\[ \d_{\ell_0, 0}: \mathcal{C}^{1,(\kappa)}_{S}\to \mathcal{C}^{2,(\kappa-1)}_{S}\]
and by $W$ its orthogonal complement, then the restricted functional
\[ \Phi_{\ell_0}|_{W}:W\to\Rr\] 
will have a strongly non-degenerate critical point at the origin $0\in W$ (here we use the fact that the complex $\Omega^{\bullet}(A_S;E)$ is elliptic).

The conclusion is that for $s\in K$ close enough to zero and $\ell$ close enough to $\ell_0$, we find that there exists an element
\[ \tilde{s}_{\ell,s}= s+ w_{\ell, s}\in s+ W\]
which is a critical point of $\Phi_{l}|_{s+ W}$. Exactly as in the Poisson case, one deduces from this that the element $c(\ell,\tilde{s}_{\ell,s})$ lies in the kernel of the operator:
\[ \lap_{\ell,\tilde{s}}:= \d_{\ell_0,0}\d_{\ell,\tilde{s}}^{*}+\d_{\ell_0,0}^*\d_{\ell,\tilde{s}}.\]
When $\ell=\ell_0$ and $s=0$ the ``Laplacian'' $\lap_{\ell,0}$ is an isomorphism, so we conclude that, if $\ell$ is close enough to $\ell_0$ and $s\in K$ is close enough to $0$, then $\lap_{\ell,\tilde{s}}$ is also an isomorphism. Therefore, we must have $c(\ell,\tilde{s}_{\ell,s})=0$, so that $\tilde{s}$ defines a leaf of $\ell$. To complete the proof, one remarks that since the complex $\mathcal{C}^{\bullet}_{S}=\Omega^{\bullet-1}(A_S;E)$ is elliptic and $\mathcal{C}^{0}_{S}=0$ one has 
\begin{align*}
K&=\ker (\d_{\ell_0, 0}:\mathcal{C}^{1,(\kappa)}_{S}\to \mathcal{C}^{2,(\kappa-1)}_{S})\\
 &=\ker (\d_{\ell_0, 0}:\mathcal{C}^{1}_{S}\to \mathcal{C}^{2}_{S})=H^0(A_S;E).
\end{align*}

\bibliographystyle{amsplain}
\def\lllll{}
 
\end{document}